\newcommand{\kom}[1]{}
\renewcommand{\kom}[1]{{\bf [#1]}}
\def\1{\raisebox{2pt}{\rm{$\chi$}}}
\newtheorem{theorem}{Theorem}[section]
\newtheorem{corollary}[theorem]{Corollary}
\newtheorem{lemma}[theorem]{Lemma}
\newtheorem{proposition}[theorem]{Proposition}
\newtheorem*{thm:gbackharnack}{Theorem \ref{thm:gbackharnack}}
\newtheorem{definition}[theorem]{Definition}
\newtheorem{remark}[theorem]{Remark}
\newcommand{\R}{{\mathbb R}}
\newcommand{\N}{{\mathbb N}}
\newcommand{\eps}{{\varepsilon}}
\def\1{\raisebox{2pt}{\rm{$\chi$}}}
\let\originalleft\left
\let\originalright\right
\renewcommand{\left}{\mathopen{}\mathclose\bgroup\originalleft}
\renewcommand{\right}{\aftergroup\egroup\originalright}
\newcommand{\abs}[1]{\left|#1\right|}
\newcommand{\norm}[1]{\left|\left|#1\right|\right|}
\newcommand{\Rn}{\mathbb{R}^n}
\newcommand{\osc}{\operatorname{osc}}
\newcommand{\aveint}[2]{\mathchoice%
	{\mathop{\kern 0.2em\vrule width 0.6em height 0.69678ex depth -0.58065ex
			\kern -0.8em \intop}\nolimits_{\kern -0.45em#1}^{#2}}%
	{\mathop{\kern 0.1em\vrule width 0.5em height 0.69678ex depth -0.60387ex
			\kern -0.6em \intop}\nolimits_{#1}^{#2}}%
	{\mathop{\kern 0.1em\vrule width 0.5em height 0.69678ex depth -0.60387ex
			\kern -0.6em \intop}\nolimits_{#1}^{#2}}%
	{\mathop{\kern 0.1em\vrule width 0.5em height 0.69678ex depth -0.60387ex
			\kern -0.6em \intop}\nolimits_{#1}^{#2}}}
\newcommand{\ud}{\, d}
\newcommand{\half}{{\frac{1}{2}}}
\newcommand{\Om}{\Omega}
\newcommand{\om}{\omega}
\newcommand{\vp}{\varphi}
\newcommand{\sgn}{\operatorname{sgn}}
\renewcommand{\div}{\operatorname{div}}
\newtheoremstyle{case}{3mm}{-1,5mm}{}{}{}{:}{ }{}
\theoremstyle{case}
\newtheorem{case}{\bf{Case}}
\newtheoremstyle{step}{3mm}{-1,5mm}{}{}{}{:}{ }{}
\theoremstyle{step}
\newcommand{\numberthis}{\addtocounter{equation}{1}\tag{\theequation}}
\newcommand{\leqnomode}{\tagsleft@true}
\newcommand{\reqnomode}{\tagsleft@false}
\DeclareMathOperator{\Tr}{Tr}
\DeclareMathOperator{\supp}{supp}
\numberwithin{equation}{section}
\let\oldtocsection=\tocsection
\let\oldtocsubsection=\tocsubsection
\let\oldtocsubsubsection=\tocsubsubsection
\renewcommand{\tocsection}[2]{\hspace{0em}\oldtocsection{#1}{#2}}
\renewcommand{\tocsubsection}[2]{\hspace{2em}\oldtocsubsection{#1}{#2}}
\renewcommand{\tocsubsubsection}[2]{\hspace{4em}\oldtocsubsubsection{#1}{#2}}
\newcommand{\overbar}[1]{\mkern 1.5mu\overline{\mkern-1.5mu#1\mkern-1.5mu}\mkern 1.5mu}
\title{Intrinsic Harnack's inequality for a general nonlinear parabolic equation in non-divergence form}
\author{Tapio Kurkinen}
\email{tapio.kurkinen@oist.jp (corresponding author)}
\author{Jarkko Siltakoski}
\email{jarkko.siltakoski@helsinki.fi}
\address{Department of Mathematics and Statistics
	University of Jyv{\"{a}}skyl{\"{a}}
	PO Box 35, FI-40014 Jyv{\"{a}}skyl{\"{a}}, Finland}
\date{\today}
\keywords{Intrinsic Harnack's inequality, viscosity solutions, nonlinear equation, $p$-parabolic equation}
\subjclass[2020]{35K55 (primary); 35K67, 35D40 (secondary)}
\begin{document}

	\begin{abstract}
		We prove the intrinsic Harnack's inequality for a general form of a parabolic equation that generalizes both the standard parabolic $p$-Laplace equation and the normalized version arising from stochastic game theory. We prove each result for the optimal range of exponents and ensure that we get stable constants.
	\end{abstract}
	\maketitle
	\section{Introduction}
	We prove the intrinsic Harnack's inequality for the following general non-divergence form version of the nonlinear parabolic equation
	\begin{equation}
	\label{eq:rgnppar}
	\partial_t u=\abs{\nabla u}^{q-p}\div\left(\abs{\nabla u}^{p-2}\nabla u\right)=\abs{\nabla u}^{q-2}(\Delta u + (p-2)\Delta_\infty^Nu),
	\end{equation}
	for the optimal range of exponents. The theorem states that a non-negative viscosity solution satisfies the following local a priori estimate
	\begin{equation}
	\label{eq:intro}
	\gamma^{-1}\sup_{B_r(x_0)}u(\cdot,t_0-\theta r^{q})\leq u(x_0,t_0)\leq\gamma\inf_{B_{r}(x_0)}u(\cdot,t_0+\theta r^{q})
	\end{equation}
	for a scaling constant $\theta$ which depends on the value of $u$. This intrinsic waiting time is the origin of the name and is required apart from the singular range of exponents where the elliptic Harnack's inequality holds \cite{Kurkinen2022}. We also establish stable constants at the vicinity of $q=2$.
	
	\sloppy
	When $q=p$, the equation \eqref{eq:rgnppar} is the standard $p$-parabolic equation for which the intrinsic Harnack's inequality was proven by DiBenedetto \cite{Dibenedetto1988} and Kwong \cite{Dibenedetto1992}, see also \cite{Dibenedetto1993}. These results were generalized for equations with growth of order $p$ by DiBenedetto, Gianazza, and Vespri \cite{Dibenedetto2008} and by Kuusi \cite{Kuusi2008}. When $q\not=p$, the equation \eqref{eq:rgnppar} is in non-divergence form. For non-divergence form equations parabolic Harnack's inequalities and related Hölder regularity results were first studied by Cordes \cite{Cordes1956} and Landis \cite{Landis1971}. Parabolic Harnack's inequality for a non-divergence form equation with bounded and measurable coefficients was proven by Krylov and Safonov \cite{Krylov1981}. Further regularity results for general fully nonlinear equations were proven by Wang \cite{Wang1990}, see also \cite{Imbert2013}. To the best of our knowledge, our proof is partly new even in the special case of the $p$-parabolic equation since it does not rely on the divergence structure.
	
	The idea of the proof of the right inequality in \eqref{eq:intro} is to first locate a local supremum and establish a positive lower bound in some small ball around this point. Then we use specific subsolutions as comparison functions to expand the set of positivity over the unit ball for a specific time slice using the comparison principle.  Our proof uses the connection of equation \eqref{eq:rgnppar} and the $p$-parabolic equation established by Parviainen and Vázquez in \cite{Parviainen2020} to construct suitable comparison functions. Heuristically, radial solutions to the non-divergence form problem can be interpreted as solutions to divergence form $p$-parabolic equation in a fictitious dimension $d$, which does not need to be an integer. The proof of the left inequality is based on estimating the values of a function in the specific time slice by using the other inequality with suitable radii and scaling of constants. 
	
	Our proofs often are split into three different cases because the behavior of solutions to \eqref{eq:rgnppar} depends on the value of $q$. For the degenerate case $q>2$, the right-side inequality is proven in \cite{Parviainen2020} and we prove the singular case $q<2$ as well as the case of $q$ near $2$. This is done separately to obtain stable constants as $q\to2$. For the left-side inequality, the singular case was proven in \cite{Kurkinen2022} and we prove the remaining cases.
	
	DiBenedetto's proof uses the theory of weak solutions but since the equation \eqref{eq:rgnppar} is in non-divergence form, unless $q=p$, we use the theory of viscosity solutions instead. Because of this, we cannot directly use energy estimates as in \cite{Dibenedetto2008} or in \cite{Dibenedetto1993}. Even defining solutions is non-trivial for this type of equations. A suitable definition taking singularities of the problem into account was established by Ohnuma and Sato \cite{Ohnuma1997}.  When $q=2$, we get the normalized $p$-parabolic equation arising from game theory which was first examined in the parabolic setting in \cite{Manfredi2010}. This problem has had recent interest for example in \cite{Jin2017}, \cite{Hoeg2019}, \cite{Dong2020} and \cite{Andrade2022}. We also point out that normalized equations have been studied in connection to image processing \cite{Does2011}, economics \cite{Parviainen2014} and machine learning \cite{Calder2019}.
	The general form of \eqref{eq:rgnppar} has been examined for example in
	\cite{Imbert2019} and \cite{Parviainen2020} in addition to \cite{Kurkinen2022}.

	We would like to thank Benny Avelin for pointing out a scaling mistake in the proof of Theorem \ref{thm:parharnack} in the initial version of this paper. This was fixed by replacing previous oscillation estimate with new Corollary \ref{cor:oscfix}. This has no effect on the final result.
	\subsection{Results}
	We work with the exponent range
	\begin{equation}
	\label{eq:range}
	q>\begin{cases}
	1 & \text{ if }p\geq\frac{1+n}{2},\\
	\frac{2(n-p)}{n-1}& \text{ if }1<p<\frac{1+n}{2},
	\end{cases}
	\end{equation}
	which is optimal for the intrinsic Harnack's inequality as we prove in Section \ref{sec:range}. For the elliptic version of the inequality where we get both estimates without waiting time, the optimal range is to assume \eqref{eq:range} and $q<2$, as we proved in \cite{Kurkinen2022}. The notation used for space-time cylinders is defined in the next section.
	\begin{theorem}
		\label{thm:gbackharnack}
		Let $u \geq 0$ be a viscosity solution to \eqref{eq:rgnppar} in $Q_{1}^{-}(1)$ and let the range condition \eqref{eq:range} hold. Fix $\left(x_{0}, t_{0}\right) \in Q_{1}^{-}(1)$ such that $u(x_0,t_0)>0$. Then there exist $\gamma=\gamma(n, p, q)$, ${c}={c}(n, p, q)$ and ${\sigma=\sigma(n,p,q)>1}$ such that
		\begin{equation*}
		\gamma^{-1}\sup_{B_r(x_0)}u(\cdot,t_0-\theta r^{q})\leq u(x_0,t_0)\leq\gamma\inf_{B_{r}(x_0)}u(\cdot,t_0+\theta r^{q})
		\end{equation*}
		where
		\begin{equation*}
		\theta={c}u\left(x_{0}, t_{0}\right)^{2-q},
		\end{equation*}
		whenever $(x_0,t_0)+Q_{\sigma r}(\theta) \subset Q_{1}^{-}(1)$.
	\end{theorem}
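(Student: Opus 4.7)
The plan is to split the analysis into three regimes determined by $q$: the degenerate case $q>2$, the singular case $q<2$ (within the admissible range \eqref{eq:range}), and a neighbourhood of $q=2$ where constants must be tracked explicitly. In the degenerate range the right-hand inequality was already obtained by Parviainen and V\'azquez in \cite{Parviainen2020}, while in the singular range the left-hand inequality was proved in \cite{Kurkinen2022}. Thus the work to be done is the right inequality for $q\leq 2$, the left inequality for $q\geq 2$, and uniformity of constants across $q=2$.

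For the right inequality I would begin by exploiting the scaling $u_\lambda(x,t)=\lambda u(x,\lambda^{q-2}t)$, which is an exact symmetry of \eqref{eq:rgnppar}; rescaling so that $u(x_0,t_0)=1$ reduces the problem to proving $\inf_{B_r(x_0)}u(\cdot,t_0+cr^{q})\geq\gamma^{-1}$ with $c$ absolute, and the intrinsic factor $u(x_0,t_0)^{2-q}$ is restored by undoing the scaling. On this normalized picture I would first locate a point $(x_1,t_1)$ near $(x_0,t_0)$ at which $u$ controls its running supremum; the standard iterative covering argument of DiBenedetto, combined with the oscillation estimate recorded in Corollary \ref{cor:oscfix}, then produces a small ball $B_\rho(x_1)$ on which $u\geq\eta$ for a short time interval following $t_1$.

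The heart of the argument is the propagation of this small positivity to the full ball $B_r(x_0)$ at the waiting time $\theta r^q$. For this I would use the Parviainen--V\'azquez reduction from \cite{Parviainen2020}: radial solutions of \eqref{eq:rgnppar} in $n$ dimensions correspond to radial solutions of the divergence-form $p$-parabolic equation in a fictitious dimension $d=d(n,p,q)$, which need not be an integer. This lets me transplant the classical Barenblatt-type subsolution of the $p$-parabolic equation into the non-divergence setting as a radial viscosity subsolution of \eqref{eq:rgnppar}. Placing this barrier with its support initially inside $B_\rho(x_1)$ and its mass comparable to $\eta$, and applying the comparison principle on a suitable cylinder, forces $u$ to dominate the barrier at time $t_0+\theta r^q$, which gives the required lower bound on $B_r(x_0)$ and hence the right inequality. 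The left inequality is then obtained by applying this right inequality backward: for each $y\in B_r(x_0)$ the pair $(y,t_0-\theta r^q)$ can be coupled to $(x_0,t_0)$ via a comparable radius and an adjusted waiting time, and summing/chaining these pointwise estimates converts the infimum bound into a supremum bound with a new constant $\gamma$.

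The main obstacle is producing barriers that are simultaneously (i) admissible viscosity subsolutions in the non-divergence sense of Ohnuma--Sato \cite{Ohnuma1997} throughout the entire range \eqref{eq:range}, and (ii) yield constants that remain bounded as $q\to 2$. The fictitious-dimension trick has to be justified uniformly, since the induced exponent $d$ and the Barenblatt self-similar profile both depend on $q$ and develop the usual singular/degenerate dichotomy across $q=2$. I would handle this by working in the singular regime $q<2$ and the near-critical regime $|q-2|\leq\delta$ separately: in the first, use the full Barenblatt subsolution and control its support via the range hypothesis $q>2(n-p)/(n-1)$, which is exactly what ensures the self-similar profile has finite extinction behaviour; in the second, replace the Barenblatt profile by its $q\to 2$ limit, a suitable Gaussian-like barrier for the normalized $p$-parabolic equation as in \cite{Manfredi2010}, and verify by an explicit computation that the two constructions match continuously, giving stable $\gamma$ and $c$ in a neighbourhood of $q=2$.
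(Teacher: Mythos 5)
Your overall decomposition into $q>2$, $q<2$, and a near-$q=2$ regime is the same as the paper's, and the first two steps (rescaling so $u(x_0,t_0)=1$, locating the supremum through DiBenedetto's expanding cylinders $M_\tau, N_\tau$ together with Corollary \ref{cor:oscfix}) match the actual proof. But the core of your forward argument has a real gap in the singular range.

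You propose to expand positivity from $B_\rho(x_1)$ to $B_r(x_0)$ at the waiting time using a single Barenblatt-type radial subsolution obtained via the fictitious-dimension correspondence, with its ``support initially inside $B_\rho(x_1)$.'' This fails for $q<2$: the Barenblatt profile for a singular ($q<2$) $q$-Laplacian evolution has \emph{infinite speed of propagation}, so its support at every positive time is all of space, and it cannot be made to fit inside a small initial ball, nor can you apply the comparison principle on a bounded cylinder whose lateral boundary the barrier reaches. This is exactly why the paper's singular case requires \emph{two} comparison functions: $\Phi$ (Lemma \ref{le:compfunc}), which propagates the initial positivity forward in time in an expanding ball, and then $\Psi$ (Lemma \ref{le:cylcomp}), defined on an annular cylinder, to push the positivity sideways across the unit ball at a fixed time slice. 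Your appeal to the range hypothesis $q>2(n-p)/(n-1)$ as ``ensuring the self-similar profile has finite extinction behaviour'' also has the logic inverted: the admissible range is precisely where solutions do \emph{not} extinguish in finite time (extinction is what destroys Harnack, as shown in Section \ref{sec:range}), and extinction has nothing to do with compact support in any case. Similarly, for $|q-2|$ small the paper's barrier $\mathcal{G}$ is a DiBenedetto-type profile with compactly supported cross-sections, not a Gaussian-like kernel; the Gaussian is a solution, not a usable subsolution with the needed support structure.

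The left inequality is also undersold. Your ``chain pointwise estimates and sum'' idea works essentially as stated only at $q=2$, where there is no intrinsic scaling. For $q>2$ the time shift $\theta r^q = c\,u(x_0,t_0)^{2-q}r^q$ depends nonlinearly on the value at the centre, and a blind chaining does not close. The paper handles this by introducing the backward time level set $\mathcal{T}=\{t\in(t_0-\theta(4\rho)^q,t_0):u(x_0,t)=2\mu u(x_0,t_0)\}$, splitting into cases $\mathcal{T}=\emptyset$ and $\mathcal{T}\neq\emptyset$, and in each case deriving the supremum bound by applying the forward inequality with carefully adjusted radii and reaching a contradiction if it fails. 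You would need an argument of that type rather than a bare coupling-of-points claim.
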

	We prove this theorem in Sections \ref{sec:forward} and \ref{sec:backward} after first introducing prerequisites and proving auxiliary results in Sections \ref{sec:pre} and \ref{sec:comp}. The theorem is proven by first establishing the right inequality, from now on called the forward Harnack's inequality, and then using this result to prove the left inequality, henceforth called backward Harnack's inequality. These names are standard in the literature. We prove the forward inequality by first locating the local supremum of our function and establishing a positive lower bound in some small ball around the supremum point. This differs from the integral Harnack's inequality used by DiBenedetto for weak solutions at this step \cite[Chapter VII]{Dibenedetto1993}. The proof of this integral inequality uses the divergence form structure of the $p$-parabolic equation and thus is not available to us without a new proof. Next, we expand the positivity set around the obtained supremum point by using suitably constructed viscosity subsolutions and the comparison principle. In the singular case, we first expand the set in the time direction using one comparison function and then expand it sidewise for a specific time slice using another one. In the degenerate case, a single Barenblatt-type function is enough to get a similar result. Yet we need a different comparison function to handle exponents near $q=2$ if we wish to have stable constants as $q\to2$. We construct these viscosity subsolutions in Section \ref{sec:comp}. 
	
	For the backward Harnack's inequality, the singular case is proven as \cite[Theorem 5.2]{Kurkinen2022}, and we prove the remaining cases in Section \ref{sec:backward}. The case $q=2$ is a direct consequence of the forward inequality as we do not have to deal with intrinsic scaling. The proof of the degenerate case follows the proof of the similar result for the $p$-parabolic equation \cite[Section 5.2]{Dibenedetto2012} and uses the forward inequality and proceeds by contradiction that the backward inequality has to hold.
	
	In Section \ref{sec:cov} we prove covering arguments that take the intrinsic scaling into account. We do this by repeatedly iterating Harnack's inequality and choosing points and radii taking the intrinsic scaling into account. In the last Section \ref{sec:range}, we prove that if $q$ does not satisfy the range condition \eqref{eq:range}, it must vanish in finite time and thus cannot satisfy the intrinsic Harnack's inequality. Thus the range condition is optimal.

	\section{Prerequisites}
	\label{sec:pre}
	When $\nabla u\not =0$, we denote
	\begin{equation}
	\Delta_p^qu:=\abs{\nabla u}^{q-p}\div\left(\abs{\nabla u}^{p-2}\nabla u\right)=\abs{\nabla u}^{q-2}(\Delta u + (p-2)\Delta_\infty^Nu),
	\end{equation}
	where $p>1$ and $q>1$ are real parameters and the normalized or game theoretic infinity Laplace operator is given by
	\begin{equation*}
	\Delta_\infty^Nu:=\sum_{i,j=1}^{n}\frac{\partial_{x_i}u \, \partial_{x_j}u \, \partial_{x_i x_j}u}{\abs{\nabla u}^2}.
	\end{equation*}
	Thus the equation \eqref{eq:rgnppar} can be written as
	\begin{equation*}
	\partial_tu=\Delta_p^qu.
	\end{equation*}
	Let $\Om\subset\Rn$ be a domain and denote $\Om_T=\Om\times(0,T)$ the space-time cylinder and
	\begin{equation*}
	\partial_{p}\Om:=\left(\Om\times\{0\}\right)\cup\left({\partial\Om\times[0,T]}\right)
	\end{equation*}
	its parabolic boundary. We will mainly work with the following type of cylinders
	\begin{align*}
	Q_{r}^{-}(\theta)&:=B_r(0)\times(-\theta r^q,0],\\
	Q_{r}^{+}(\theta)&:=B_r(0)\times(0,\theta r^q)
	\end{align*}
	where $\theta$ is a positive parameter that determines the time-wise length of the cylinder relative to $r^q$. We denote the union of these cylinders as
	\begin{equation}
	Q_{r}(\theta):=Q_{r}^{+}(\theta)\cup Q_{r}^{-}(\theta) \label{def:cylinders}
	\end{equation} and when not located at the origin, we denote
	\begin{align*}
	(x_0,t_0)+Q_{r}^{-}(\theta)&:=B_r(x_0)\times(t_0-\theta r^q,t_0],\\
	(x_0,t_0)+Q_{r}^{+}(\theta)&:=B_r(x_0)\times(t_0,t_0+\theta r^q),\\
	(x_0,t_0)+Q_{r}(\theta)&:=B_r(x_0)\times(t_0-\theta r^q,t_0+\theta r^q).
	\end{align*}
	Apart from the case $p=q$, the equation \eqref{eq:rgnppar} is in non-divergence form and thus the standard theory of weak solutions is not available, and we will use the concept of viscosity solutions instead. Moreover, the equation is singular for $2>q>1$, and thus we need to restrict the class of test function in the definition to retain good a priori control on the behavior of solutions near the singularities. We use the definition first introduced in \cite{Ishii1995} for a different class of equations and in \cite{Ohnuma1997} for our setting. This is the standard definition in this context and it naturally lines up with the p-parabolic equation ($p=q$), where notions of weak and viscosity solutions are equivalent for all $p\in(1,\infty)$\cite{Juutinen2001,Parviainen2020,Siltakoski2021}. See also \cite{Julin2011}.
	
	Denote
	\begin{equation}
	\label{eq:gnnpparf}
	F(\eta,X)=\abs{\eta}^{q-2}\Tr\left(X+(p-2)\frac{\eta\otimes \eta}{\abs{\eta}^2}X\right)
	\end{equation}
	where $(a\otimes b)_{ij}=a_ib_j$, so that
	\begin{align*}
	F(\nabla u,D^2u)&=\abs{\nabla u}^{q-2}(\Delta u+(p-2)\Delta_\infty^Nu)=\Delta_p^qu
	\end{align*}
	whenever $\nabla u\not=0$.
	Let $\mathcal{F}(F)$ be the set of functions $f\in C^2([0,\infty))$ such that
	\begin{equation*}
	f(0)=f'(0)=f''(0)=0 \text{ and } f''(r)>0 \text{ for all }r>0,
	\end{equation*}
	and also require that for $g(x):=f(\abs{x})$, it holds that
	\begin{equation*}
	\lim_{\substack{x\to0\\x\not=0}}F(\nabla g(x),D^2g(x))=0.
	\end{equation*}
	This set $\mathcal{F}(F)$ is never empty because it is easy to see that $f(r)=r^\beta\in\mathcal{F}(F)$ for any $\beta>\max(q/(q-1),2)$. Note also that if $f\in\mathcal{F}(F)$, then $\lambda f\in\mathcal{F}(F)$ for all $\lambda>0$.
	
	Define also the set
	\begin{equation*}
	\Sigma=\{\sigma\in C^1(\R)\mid \sigma \text{ is even}, \sigma(0)=\sigma'(0)=0, \text{ and }\sigma(r)>0 \text{ for all }r>0\}.
	\end{equation*}
	We use $\mathcal{F}(F)$ and $\Sigma$ to define an admissible set of test functions for viscosity solutions.
	\begin{definition}\sloppy
		A function $\vp\in C^2(\Om_T)$ is admissible at a point $(x_0,t_0)\in\Om_T$ if either ${\nabla \vp(x_0,t_0)\not=0}$ or there are $\delta>0$, $f\in\mathcal{F}(F)$ and $\sigma\in\Sigma$ such that
		\begin{equation*}
		\abs{\vp(x,t)-\vp(x_0,t_0)-\partial_{t}\vp(x_0,t_0)(t-t_0)}\leq f(\abs{x-x_0})+\sigma(t-t_0),
		\end{equation*}
		for all $(x,t)\in B_\delta(x_0)\times (t_0-\delta, t_0+\delta)$. A function is admissible in a set if it is admissible at every point of the set.
	\end{definition}
	Note that by definition a function $\vp$ is automatically admissible in $\Om_T$ if either $\nabla\vp(x,t)\not=0$ in $\Om_T$ or the function $-\vp$ is admissible in $\Om_T$.
	\begin{definition}
		A function $u:\Om_T\rightarrow\mathbb{R}\cup\left\{ \infty\right\} $
		is a viscosity supersolution to
		\[
		\partial_{t}u=\Delta_{p}^{q}u\quad\text{in }\Om_T
		\]
		if the following three conditions hold.
		\begin{enumerate}
			\label{def:super}
			\item $u$ is lower semicontinuous,
			\item $u$ is finite in a dense subset of $\Om_T$,
			\item whenever an admissible $\vp\in C^{2}(\Om_T)$ touches $u$ at $(x,t)\in\Om_T$
			from below, we have
			\[
			\begin{cases}
			\partial_{t}\vp(x,t)-\Delta_{p}^{q}\vp(x,t)\geq0 & \text{if }\nabla \vp(x,t)\not=0,\\
			\partial_{t}\vp(x,t)\geq0 & \text{if }\nabla \vp(x,t)=0.
			\end{cases}
			\]
		\end{enumerate}
		A function $u:\Om_T\rightarrow\mathbb{R}\cup\left\{ -\infty\right\} $
		is a viscosity subsolution if $-u$ is a viscosity supersolution. A function $u:\Om_T\rightarrow\mathbb{R} $
		is a viscosity solution if it is a supersolution and a subsolution.
	\end{definition}
	
	In our proof of the forward Harnack's inequality for the singular range, we need an oscillation to obtain our initial lower bound because we do not have access to weak Harnack-type estimates used by DiBenedetto. We prove this result using a scaling argument and a singular range version of the oscillation estimate used in the degenerate case. This result is proven in \cite[Corollary 7.2]{Parviainen2020} for the case $q<2$. The proofs remain largely the same but we will present the proof here for the convenience of the reader.
	\newpage
\begin{lemma}
	\label{le:soscil2}
	Let $u$ be a viscosity solution to \eqref{eq:rgnppar} in $\Om_T$ and assume $q<2$. For any $\delta\in(0,1)$, there exists $C:=C(n,p,q,\delta)>1$ such that the following holds. Suppose that $\omega_0$ is such that for $a_0 := \om_0^{2-q}$, we have $Q_{4r}(a_0)\subset\Om_T$ and
	\begin{equation*}
		\osc_{Q_r(a_0)}u\leq\om_0,
	\end{equation*}
	and define the sequences
	\begin{equation*}
		r_i:=C^{-i}r, \quad \om_i:=\delta\om_{i-1}, \quad a_i:=\om_i^{2-q}
	\end{equation*}
	where $i=1,2,\dots$. 
	Then it holds that
	\begin{equation*}
		Q_{r_{i+1}}(a_{i+1})\subset Q_{r_i}(a_i) \quad \text{ and }\quad \osc_{Q_{r_i}(a_{i})}u\leq\om_i.
	\end{equation*}
\end{lemma}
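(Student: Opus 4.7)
My plan is to prove the lemma by induction on $i$. The base case $i=0$ is given by hypothesis. For the inductive step, the containment $Q_{r_{i+1}}(a_{i+1}) \subset Q_{r_i}(a_i)$ is immediate as soon as $C>1$: spatially $r_{i+1}<r_i$, and in time $a_{i+1}r_{i+1}^q = \delta^{2-q}C^{-q}a_ir_i^q < a_ir_i^q$ since $\delta<1$ and $q<2$. Thus all the work lies in the oscillation decay $\osc_{Q_{r_{i+1}}(a_{i+1})} u \leq \om_{i+1} = \delta\om_i$.

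The key observation is the intrinsic scaling invariance of \eqref{eq:rgnppar}: matching powers of $r$ and $\om$ shows that if $u$ is a viscosity solution and $a=\om^{2-q}$, then $v(y,s):=\om^{-1}u(ry,ar^qs)$ is also a viscosity solution of the same equation, since both $\partial_s v$ and $\Delta_p^q v$ pick up the common factor $\om^{1-q}r^q$. Rescaling $u$ on $Q_{r_i}(a_i)$ with $\om=\om_i$ produces a viscosity solution $v$ on $Q_1(1)$ with $\osc_{Q_1(1)} v \leq 1$ by the inductive hypothesis, and the hypothesis $Q_{4r}(a_0)\subset \Om_T$ supplies the enlarged reference cylinder $Q_4(1)$ on which an interior estimate can be applied (using $r_i\leq r$ and $a_ir_i^q\leq a_0 r^q$ to see that $Q_{4r_i}(a_i)\subset Q_{4r}(a_0)$).

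Next I would apply the singular-range unit-scale oscillation reduction: for every $\delta\in(0,1)$ there is $C=C(n,p,q,\delta)>1$ such that any viscosity solution $v$ of \eqref{eq:rgnppar} on $Q_4(1)$ with $\osc_{Q_1(1)} v \leq 1$ satisfies $\osc_{Q_{1/C}(1)} v \leq \delta$; this is the singular analogue of \cite[Corollary 7.2]{Parviainen2020}. Taking the same $C$ for the geometric sequence $r_i=C^{-i}r$ and unscaling yields $\osc_{Q_{r_{i+1}}(a_i)} u \leq \delta\om_i = \om_{i+1}$, and combining with the containment $Q_{r_{i+1}}(a_{i+1})\subset Q_{r_{i+1}}(a_i)$ (which follows from $a_{i+1}<a_i$) gives the required bound on $Q_{r_{i+1}}(a_{i+1})$.

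The main obstacle I expect is the unit-scale oscillation reduction itself in the singular regime $q<2$. Because \eqref{eq:rgnppar} is singular at points where $\nabla v=0$, the estimate cannot be imported from standard uniformly parabolic theory: one must argue within the Ohnuma--Sato restricted class of admissible test functions and carry out a Krylov--Safonov-type measure decrease argument, as in \cite{Parviainen2020}. However, the scaling calculation above is insensitive to the sign of $2-q$, so the degenerate-case scheme transfers to $q<2$ with only technical modifications, which is why the proof is asserted to be essentially the same as in the cited reference.
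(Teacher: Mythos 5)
Your overall scheme is the same one the paper uses: zoom in on $Q_{r_k}(a_k)$ via the intrinsic rescaling $v(y,s)=\om_k^{-1}u(r_k y, a_k r_k^q s)$ (you correctly verified this preserves the equation), invoke an interior unit-scale oscillation decay, unscale, and close by induction. Your remark that the cylinder nesting is automatic once $C>1$ (because $q<2$ and $\delta<1$ make $\delta^{2-q}<1$) is correct and in fact slightly tidies what the paper writes.

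Where your write-up falls short is the key step. You assert a ``singular-range unit-scale oscillation reduction'' and suggest it would have to be proved afresh by a Krylov--Safonov-type measure decrease argument within the Ohnuma--Sato test-function class, treating it as the genuinely hard, not-yet-available ingredient. But the needed interior regularity for \eqref{eq:rgnppar} in the singular range is already in the literature, and the paper uses it directly: the time-H\"older estimate \cite[Lemma 3.1]{Imbert2019} (exponent $\tfrac12$, constant depending only on $n,p,q,\norm{u_k}_{L^\infty}$) and the spatial estimate \cite[Lemma 2.3]{Imbert2019} (Lipschitz-plus-quadratic bound). The paper separately bounds $\abs{u_k(\bar x,\bar t)-u_k(\bar y,\bar t)}$ and $\abs{u_k(\bar y,\bar t)-u_k(\bar y,\bar s)}$ by these two lemmas, uses the induction hypothesis to normalize the $L^\infty$ dependence, and then simply chooses $C$ large enough that the resulting oscillation over $Q_{C^{-1}}(\delta^{2-q}C^{-q})$ is $\leq\delta$. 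So your proposal is not wrong in spirit, but it misidentifies the asserted black box as something that needs a new proof when it is in fact an immediate consequence of citable a priori H\"older estimates; replacing your unproved ``unit-scale oscillation reduction'' with those two lemmas and the explicit choice of $C$ would close the gap and recover the paper's argument.
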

\begin{proof}
	Observe that $Q_{r_{i+1}}(a_{i+1})\subset Q_{r_i}(a_i)$ holds as long as in the time-direction we have
	\begin{equation*}
		a_{i+1}r_{i+1}^q=\left(\delta\om_i\right)^{2-q}C^{-(i+1)q}r^q=\delta^{2-q}C^{-q}\left(\om_i\right)^{2-q}(C^{-i}r)^q\leq a_ir_i^q
	\end{equation*}
	where the last inequality holds if we choose $C$ to satisfy $C^q\delta^{q-2}\geq1$. To prove the second claim we will use induction.
	
	The case $i=0$ holds by assumption. Suppose that the claim holds for some $i=k$ meaning
	\begin{equation*}
		\osc_{Q_{r_k}(a_{k})}u\leq\om_k
	\end{equation*}
	and define
	\begin{equation*}
		u_k(x,t):=\frac{u(r_kx,a_k r_k^qt)-\inf_{Q_{r_k}(a_{k})}u}{\om_k}.
	\end{equation*}
	By induction assumption $\sup_{Q_1(1)}u_k\leq 1$.
	By change of variables, we can rewrite
	\begin{align*}
		\label{eq:oschold3b}
		\osc_{Q_{r_{k+1}}(a_{k+1})}\frac{u}{\om_k}&=\osc_{(x,t)\in Q_1(1)}\frac{u(r_{k+1}x,a_{k+1}r_{k+1}^qt)}{\om_k}\\&=\osc_{(x,t)\in Q_1(1)}\frac{u(C^{-1}r_{k}x,\delta^{2-q}C^{-q}a_{k}r_{k}^qt)-\inf_{Q_{r_k}(a_{k})}u}{\om_k} \\&=\osc_{Q_{C^{-1}}(\delta^{2-q}C^{-q})}u_k \numberthis
	\end{align*}
	Next, we will use the Hölder estimates proved in \cite{Imbert2019} to estimate the oscillation. By \cite[Lemma 3.1]{Imbert2019}, there exists a constant $C_1:=C_1(n,p,q,\norm{u_k}_{L^\infty(Q_{4r}(1))})$ such that
	\begin{equation}
		\label{eq:oschold1b}
		\sup_{\substack
			{t,s\in[-1,1]\\
				t\not=s}}\frac{\abs{u_k(x,t)-u_k(x,s)}}{\abs{t-s}^{\frac12}}\leq C_1
	\end{equation}
	and by using \cite[Lemma 2.3]{Imbert2019} for $y=x_0$ and $t=t_0$, there exists a constant $C_2:=C_2(n,p,q,\norm{u_k}_{L^\infty(Q_{16r}(1))})$ such that
	\begin{equation}
		\label{eq:oschold2b}
		u_k(x,t)-u_k(y,t)\leq C_2\left(\abs{x-y}+\abs{x-y}^2\right).
	\end{equation}
	By our induction assumption and the definition of $u_k$, $\norm{u_k}_{L^\infty(Q_{4r})(1)}\leq1$ and thus $C_1$ and $C_2$ can be chosen independent of the solution.

	Now \eqref{eq:oschold3b} can be estimated with  \eqref{eq:oschold1b} and \eqref{eq:oschold2b} in the following way: Denote ${G:=Q_{C^{-1}}(\delta^{2-q}C^{-q})}$ and let $(\bar{x},\bar{t})\in G$ be the point where $\sup_{G}u_k$ is obtained and $(\bar{y},\bar{s})\in G$ be the point where $\inf_{G}u_k$ is obtained. Now for $C_3=\max\{C_1,C_2\}$, we have
	\begin{align*}
		\label{eq:oschold4b}
		\osc_{G}u_k&\leq u_k(\bar{x},\bar{t})-u_k(\bar{y},\bar{s})+u_k(\bar{y},\bar{t})-u_k(\bar{y},\bar{t})\\&\leq C_1\abs{\bar{t}-\bar{s}}^{\frac{1}{2}}+C_2\left(\abs{\bar{x}-\bar{y}}+\abs{\bar{x}-\bar{y}}^2\right)
		\\&\leq C_3\left(\left[\delta^{2-q}C^{-q}\right]^{\frac{1}{2}}+C^{-1}+C^{-2}\right)
		\\&\leq C_3\left(\frac{\delta}{3C_3}+\frac{\delta}{3C_3}+\frac{\delta}{3C_3}\right)=\delta \numberthis
	\end{align*}
	where the last inequality holds if we choose
	\begin{equation*}
		C=\max\left\{\frac{3C_3}{\delta},\frac{(3C_3)^{\frac{2}{q}}}{\delta}, \delta^{-1}\right\}.
	\end{equation*}
	Thus by combining \eqref{eq:oschold3b} and \eqref{eq:oschold4b}, we get
	\begin{equation*}
		\osc_{Q_{r_{k+1}}(a_{k+1})}u\leq\delta\om_k=\om_{k+1}
	\end{equation*}
	as desired. Assumption $C\geq\delta^{-1}$ assures that $C^q\delta^{q-2}\geq1$.
\end{proof}
Because the situation is translation invariant, we can move all the cylinders to a point $(\hat{x},\hat{t})$ as long as $u$ remains a solution around the translated cylinder. The lemma also holds for cut cylinders $Q_r^-$ and $Q_r^+$ with the same proof. As a consequence we get the following specific form of oscillation estimate for a fixed time.
\begin{corollary}
	\label{cor:oscold}
	Let $u$ be a viscosity solution to \eqref{eq:rgnppar} in $\Om_T$ and assume $q<2$. Let $(\hat{x},\hat{t})\in\Om_T$ and suppose that $\omega_0$ is such that for $a_0 := \om_0^{2-q}$, we have $(\hat{x},\hat{t})+Q_{4r}^{-}(a_0)\subset\Om_T$ and
	\begin{equation*}
		\osc_{(\hat{x},\hat{t})+Q_r^-(a_0)}u\leq\om_0.
	\end{equation*}
	Then there exists constants $\hat{C}(n,p,q)>1$ and $\nu=\nu(n,p,q)\in(0,1)$ such that for any $0<\rho<r$ it holds
	\begin{equation*}
		\osc_{B_\rho(\hat{x})} u(\cdot,\hat{t})\leq \hat{C}\om_0\left(\frac{\rho}{R}\right)^\nu.
	\end{equation*}
\end{corollary}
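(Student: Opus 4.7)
The plan is to iterate Lemma \ref{le:soscil2} (which applies on cut cylinders $Q_r^-$ as the author remarks) and to extract the information at the time slice $t=\hat{t}$. Since $\hat{t}$ is the top of each cylinder $(\hat{x},\hat{t})+Q_{r_i}^-(a_i)$, the time slice at $\hat{t}$ intersects every such cylinder in $\overline{B_{r_i}(\hat{x})}\times\{\hat{t}\}$, so the spatial oscillation at time $\hat{t}$ is controlled by the full space-time oscillation.

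Concretely, I would first fix any $\delta\in(0,1)$ (for instance $\delta=1/2$, so that the constants depend only on $n,p,q$) and apply Lemma \ref{le:soscil2} translated to $(\hat{x},\hat{t})$ to obtain $C=C(n,p,q)>1$ and sequences $r_i=C^{-i}r$, $\om_i=\delta^i\om_0$, $a_i=\om_i^{2-q}$ satisfying
\begin{equation*}
\osc_{(\hat{x},\hat{t})+Q_{r_i}^-(a_i)} u \leq \om_i.
\end{equation*}
Then, given $0<\rho<r$, I would choose the unique integer $i\geq 0$ with $r_{i+1}\leq\rho<r_i$, i.e.\ $C^{-(i+1)}r\leq\rho<C^{-i}r$. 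Since restricting to $t=\hat{t}$ gives $B_{r_i}(\hat{x})\times\{\hat{t}\}\subset(\hat{x},\hat{t})+Q_{r_i}^-(a_i)$, one has
\begin{equation*}
\osc_{B_\rho(\hat{x})} u(\cdot,\hat{t}) \leq \osc_{B_{r_i}(\hat{x})} u(\cdot,\hat{t}) \leq \om_i = \delta^i\om_0.
\end{equation*}

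To convert this into an expression of the form $\hat{C}\om_0(\rho/r)^\nu$, I would set $\nu:=\log(1/\delta)/\log C\in(0,1)$ (choosing $\delta$ so that $\nu<1$) and use $\rho\geq C^{-(i+1)}r$, which gives
\begin{equation*}
\delta^i = \delta^{-1}\delta^{i+1} = \delta^{-1}\bigl(C^{-(i+1)}\bigr)^\nu \leq \delta^{-1}\bigl(\rho/r\bigr)^\nu.
\end{equation*}
Taking $\hat{C}:=\delta^{-1}$ then yields the claim. The small case $\rho\geq r$ is excluded by hypothesis, and for $\rho$ close to $r$ (i.e.\ $i=0$) one uses $\osc_{B_\rho(\hat{x})}u(\cdot,\hat{t})\leq\om_0\leq\hat{C}\om_0(\rho/r)^\nu$ directly.

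There is no real obstacle: Lemma \ref{le:soscil2} does all the heavy lifting (the intrinsic scaling is absorbed by taking $a_i=\om_i^{2-q}$, and the H\"older estimates from \cite{Imbert2019} quoted inside its proof are what force the dependence of $C$ on $n,p,q$). The only things to be careful about are that $\delta$ must be fixed once and for all so that $C$, and therefore $\nu$, depend only on $n,p,q$, and that the time slice $t=\hat{t}$ lies inside every $(\hat{x},\hat{t})+Q_{r_i}^-(a_i)$, which is automatic because these are backward-in-time cylinders ending at $\hat{t}$.
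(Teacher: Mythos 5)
Your proof is correct and follows essentially the same route as the paper's: apply Lemma~\ref{le:soscil2} on backward cylinders centered at $(\hat{x},\hat{t})$, locate $\rho$ between consecutive radii $r_{i+1}\leq\rho<r_i$, note the time slice $t=\hat{t}$ sits at the top of each backward cylinder, and convert $\delta^i$ into $(\rho/r)^\nu$ with $\nu=\log(1/\delta)/\log C$ and $\hat{C}=\delta^{-1}$. One small remark: you write ``choosing $\delta$ so that $\nu<1$,'' but in fact $\nu<1$ is automatic once Lemma~\ref{le:soscil2} forces $C>\delta^{-1}$ (equivalently $\log C>\log(1/\delta)$), so no extra tuning of $\delta$ is needed there.
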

\begin{proof}
	Let $\delta$, $C$, $a_k$ and $r_k$ be as in Lemma \ref{le:soscil2} and pick any $\rho\in(0,r]$. Choose an integer $k$ such that
	\begin{equation*}
		C^{-(k+1)}r\leq\rho\leq C^{k}r=r_k
	\end{equation*}
	so that we have
	\begin{equation*}
		k+1\geq-\frac{\log(\frac{\rho}{r})}{\log(C)}.
	\end{equation*}
	Thus by using the definition of $\om_k$, we have
	\begin{equation}
		\label{eq:osccor1}
		\om_k=\delta^k\om_0=\delta^{-1}\delta^{k+1}\om_0\leq\delta^{-1}\delta^{-\frac{\log(\frac{\rho}{r})}{\log(C)}}\om_0=\delta^{-1}\left(\frac{\rho}{r}\right)^{-\frac{\log(\delta)}{\log(C)}}\om_0.
	\end{equation}
	We have $\rho\leq r_k$ and thus by Lemma \ref{le:soscil2}
	\begin{equation*}
		\osc_{B_\rho(\hat{x})} u(\cdot,\bar{t})\leq\osc_{(\hat{x},\hat{t})+Q_\rho^-(a_k)} u\leq\osc_{(\hat{x},\hat{t})+Q_{r_k}^-(a_k)} u\leq\om_k.
	\end{equation*}
	Choosing $\hat{C}=\delta^{-1}$, we can use \eqref{eq:osccor1} to conclude
	\begin{equation*}
		\osc_{B_\rho(\hat{x})} u(\cdot,\bar{t})\leq\hat{C}\left(\frac{\rho}{r}\right)^{-\frac{\log(\delta)}{\log(C)}}\om_0,
	\end{equation*}
	which is strong enough because
	\begin{equation*}
		\nu:=-\frac{\log(\delta)}{\log(C)}\in(0,1),
	\end{equation*}
	because we assumed $C>\delta^{-1}$, when proving Lemma \ref{le:soscil2}.
\end{proof}
This corollary is used to prove the initial estimate for Harnack's inequality in the degenerate case. For $q<2$, we use cylinders with different time scaling. Specifically, for the proof, we need to obtain an estimate on the top of a time cylinder that does not depend on the height of the cylinder. We prove the needed estimate using a scaling argument and the previous corollary.
\begin{corollary}
	\label{cor:oscfix}
	 Let $u$ be a viscosity solution to \eqref{eq:rgnppar} in $\Om_T$ and assume $q<2$. Let $(\hat{x},\hat{t})\in\Om_T$, $R>0$ such that $(\hat{x},\hat{t})+Q_{4R}^-(1)\subset\Om_T$, and suppose that for some $\om>1$ and $0<\eps<\om^{q-2}R^q$, we have
	 \begin{equation}
	 	\label{eq:osclemma1}
	 	\osc_{B_R(\hat{x})\times(\hat{t}-\eps,\hat{t}]}u\leq\om.
	 \end{equation}
	 Then there exits constants $\nu$ and $\hat{C}$ such that for any $0<\rho<R$, we have
	 \begin{equation*}
	 	\osc_{B_\rho(\hat{x})\times(\hat{t}-\left(\frac{\rho}{R}\right)^q\eps,\hat{t}]}u\leq\hat{C}\om\left(\frac{\rho}{R}\right)^\nu,
	 \end{equation*}
	 so that in particular
	 \begin{equation*}
	 	\osc_{B_\rho(\hat{x})}u(\cdot,\hat{t})\leq\hat{C}\om\left(\frac{\rho}{R}\right)^\nu.
	 \end{equation*}
\end{corollary}
\begin{proof}
	Without loss of generality, assume $(\hat{x},\hat{t})=(0,0)$. Set $\ell=\left(\frac{\eps}{\om^{q-2}R^q}\right)^{\frac{1}{2(q-2)}}\geq1$ and
	\begin{equation*}
		v(x,t):=\ell u(x,\ell^{q-2}t).
	\end{equation*}
	Then $v$ is a viscosity solution in $Q_{4R}^{-}(\ell^{2-q})\supset Q_{4R}^{-}(1)$ since by a formal calculation for smooth solutions, we have
	\begin{align*}
		\partial_t v(x,t)&=\ell^{q-1}\partial_tu(x,\ell^{q-2}t)\\
		&=\left[\ell^{q-1}\abs{\nabla u}^{q-2}\left(\Delta u+(p-2)\frac{\nabla u\otimes\nabla u}{\abs{\nabla u}^2}D^2u\right)\right](x,\ell^{q-2}t) \\
		&=\left[\abs{\nabla (\ell u)}^{q-2}\left(\Delta (\ell u)+(p-2)\frac{\nabla (\ell u)\otimes\nabla (\ell u)}{\abs{\nabla (\ell u)}^2}D^2(\ell u)\right)\right](x,\ell^{q-2}t) \\
		&=\left[\abs{\nabla v}^{q-2}\left(\Delta v+(p-2)\frac{\nabla v\otimes\nabla v}{\abs{\nabla v}^2}D^2v\right)\right](x,t).
	\end{align*}
	On the other hand, observe that
	\begin{equation}
		\label{eq:osclemma2}
		R^q(\om^{q-2}R^{-q}\eps)^{\half}=(\om^{q-2}R^{q}\eps)^{\half}=\eps\left(\frac{\eps}{\om^{q-2}R^{q}}\right)^{-\half}=\eps\ell^{2-q}.
	\end{equation}
	Using this and our assumption \eqref{eq:osclemma1}, we can estimate
	\begin{align*}
	\osc_{Q_R^{-}(R^q(\om^{q-2}R^{-q}\eps)^{\half})}v&=\osc_{B_R(0)\times(-\eps \ell^{2-q},0]}\ell u(x,\ell^{q-2}t)\\
	&=\ell\osc_{B_R(0)\times(-\eps,0]} u(x,t) \\
	&\leq\left(\frac{\eps}{\om^{q-2}R^q}\right)^{\frac{1}{2(q-2)}}\om \\
	&=\left(\om^{q-2}R^{-q}\eps\right)^{\frac{1}{2(q-2)}}.
	\end{align*}
	Denoting $\om':=\left(\om^{q-2}R^{-q}\eps\right)^{\frac{1}{2(q-2)}}$ and $a':=(\om')^{q-2}=\left(\om^{q-2}R^{-q}\eps\right)^{\frac{1}{2}}$, this reads as
	\begin{equation*}
		\osc_{Q_R^-(a')}v\leq \om'.
	\end{equation*}
	Since $\om'=l\om\geq\om>1$ and $v$ is a viscosity solution in $Q_{4R}^{-}(1)$, the oscillation estimate Corollary \ref{cor:oscold} yields for any $\rho\in(0,R)$ that
	\begin{equation}
		\label{eq:osclemma3}
		\osc_{Q_\rho^-(a')}v\leq \hat{C}\om'\left(\frac{\rho}{R}\right)^\nu.
	\end{equation}
	Using the definition of $v$ and $a'$ as well as the computation \eqref{eq:osclemma2} on the left side, we get
	\begin{equation}
		\osc_{Q_\rho^-(a')}v(x,t)=\osc_{B_\rho(0)\times(-\eps \ell^{2-q}\left(\frac{\rho}{R}\right)^q,0]}\ell u(x,\ell^{q-2}t)=\ell\osc_{B_\rho(0)\times(-\eps \left(\frac{\rho}{R}\right)^q,0]} u(x,t).
	\end{equation}
	Thus dividing both sides of \eqref{eq:osclemma3} by $\ell$, we get the desired estimate.
\end{proof}

	Our proofs use the following comparison principle, which is Theorem 3.1 in \cite{Ohnuma1997}.
	\begin{theorem}
		\label{thm:comp}
		Let $\Omega\subset\Rn$ be a bounded domain. Suppose that u is a viscosity supersolution and $v$ is a viscosity subsolution to \eqref{eq:rgnppar} in $\Omega_{T}$. If
		$$
		\infty \neq \limsup _{\Omega_{T} \ni(y, s) \rightarrow(x, t)} v(y, s) \leq \liminf _{\Omega_{T} \ni(y, s) \rightarrow(x, t)} u(y, s) \neq-\infty
		$$
		for all $(x, t) \in \partial_{p} \Omega_{T},$ then $v \leq u$ in $\Omega_{T}$.
	\end{theorem}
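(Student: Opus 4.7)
The plan is to adapt the parabolic Crandall-Ishii-Jensen doubling-of-variables argument to the singular operator $F$, with the admissibility class $\mathcal{F}(F)$ playing a decisive role at degenerate contact points. Arguing by contradiction, suppose $M := \sup_{\Om_T}(v - u) > 0$; by the boundary inequality together with upper semicontinuity of $v$ and lower semicontinuity of $u$, this supremum is attained at a positive distance from $\partial_p \Om_T$. To replace $u$ by a strict supersolution I would set $\tilde u := u + \sigma/(T - t)$ for small $\sigma > 0$; a short calculation shows $\tilde u$ is still a supersolution and the relevant viscosity inequality is strengthened by the additive term $\sigma/(T-t)^2 > 0$. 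It then suffices to derive a contradiction from $\sup(v - \tilde u) > 0$ and to let $\sigma \to 0$.

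Fix a large exponent $\beta$ satisfying $2\beta > \max\{q/(q-1),\,2\}$, and for each small $\varepsilon > 0$ maximize
\begin{equation*}
\Phi_\varepsilon(x,y,t,s) := v(x,t) - \tilde u(y,s) - \frac{|x-y|^{2\beta}}{2\beta\,\varepsilon} - \frac{(t-s)^2}{2\varepsilon}
\end{equation*}
over $\overline{\Om}_T \times \overline{\Om}_T$. Standard penalization estimates give maximizers $(x_\varepsilon, y_\varepsilon, t_\varepsilon, s_\varepsilon)$ that remain in the interior for small $\varepsilon$, with $|x_\varepsilon - y_\varepsilon|^{2\beta}/\varepsilon \to 0$, $(t_\varepsilon - s_\varepsilon)^2/\varepsilon \to 0$, and $v(x_\varepsilon, t_\varepsilon) - \tilde u(y_\varepsilon, s_\varepsilon) \to \sup(v - \tilde u)$. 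The parabolic theorem of sums then produces $a, b \in \RR$ with $a - b = (t_\varepsilon - s_\varepsilon)/\varepsilon$, a common spatial gradient
\begin{equation*}
\eta_\varepsilon := \frac{|x_\varepsilon - y_\varepsilon|^{2\beta - 2}(x_\varepsilon - y_\varepsilon)}{\varepsilon},
\end{equation*}
and symmetric matrices $X_\varepsilon \leq Y_\varepsilon$ (up to a controllable penalization Hessian error) such that $(a, \eta_\varepsilon, X_\varepsilon)$ lies in the closed parabolic semijet $\overline{\mathcal P}^{2,+} v(x_\varepsilon, t_\varepsilon)$ and $(b, \eta_\varepsilon, Y_\varepsilon) \in \overline{\mathcal P}^{2,-} \tilde u(y_\varepsilon, s_\varepsilon)$.

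The conclusion now splits according to whether $\eta_\varepsilon$ vanishes. If $\eta_\varepsilon \neq 0$, the viscosity inequalities are in their non-degenerate form, giving $a \leq F(\eta_\varepsilon, X_\varepsilon)$ and $b \geq F(\eta_\varepsilon, Y_\varepsilon) + \sigma/(T - t_\varepsilon)^2$; subtracting and using degenerate ellipticity $F(\eta_\varepsilon, X_\varepsilon) \leq F(\eta_\varepsilon, Y_\varepsilon)$ (up to the controllable penalization error) bounds $(t_\varepsilon - s_\varepsilon)/\varepsilon$ above by a negative quantity bounded away from $0$, contradicting $(t_\varepsilon - s_\varepsilon)^2/\varepsilon \to 0$. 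If instead $\eta_\varepsilon = 0$, the natural touching test function for $v$ has radial profile $f(r) = r^{2\beta}/(2\beta\,\varepsilon)$, and the choice $2\beta > \max\{q/(q-1),2\}$ is exactly what forces $f \in \mathcal{F}(F)$: the conditions $f(0)=f'(0)=f''(0)=0$ require $2\beta > 2$, while a direct radial computation gives $F(\nabla g, D^2 g) = O(r^{2\beta(q-1) - q})$, which tends to $0$ iff $2\beta > q/(q-1)$. Admissibility is therefore secured, the subsolution inequality for $v$ yields $a \leq 0$, the supersolution inequality for $\tilde u$ yields $b \geq \sigma/(T - t_\varepsilon)^2 > 0$, and again $a - b = (t_\varepsilon - s_\varepsilon)/\varepsilon \to 0$ is contradicted.

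The main obstacle is precisely this degenerate case $\eta_\varepsilon = 0$: without the admissibility machinery, the standard penalization $|x-y|^{2\beta}$ has vanishing gradient at the doubled contact point and no viscosity inequality is directly available. The whole purpose of the class $\mathcal{F}(F)$ introduced by Ohnuma and Sato is to license such test functions, and verifying the exact admissibility threshold for the penalization, uniformly in $\varepsilon$, is the technical heart of the proof. Once this is done, the remainder is a routine parabolic adaptation of the Jensen-Ishii framework.
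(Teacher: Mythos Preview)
The paper does not prove this theorem; it simply quotes it as Theorem~3.1 of Ohnuma--Sato \cite{Ohnuma1997}. Your sketch is a faithful outline of how that comparison principle is actually established: perturb to a strict supersolution via $\sigma/(T-t)$, double variables with a penalization $|x-y|^{2\beta}/(2\beta\varepsilon)$ whose radial profile lies in $\mathcal{F}(F)$, and split according to whether the spatial gradient of the penalty vanishes at the approximate maximum. The key insight --- that the threshold $2\beta>\max\{q/(q-1),2\}$ is precisely what makes $r\mapsto r^{2\beta}$ belong to $\mathcal{F}(F)$, so that at a degenerate contact point the penalty itself is an admissible test function and the definition delivers the time-derivative inequality directly --- is exactly the point of the Ohnuma--Sato framework, and you have identified it correctly.

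One bookkeeping slip: with your time-doubled penalty, freezing one pair of variables and using the other as a test function gives $a=b=(t_\varepsilon-s_\varepsilon)/\varepsilon$, not $a-b=(t_\varepsilon-s_\varepsilon)/\varepsilon$; the parabolic theorem of sums does not alter the first-order time components here. This does not damage the argument --- in both the degenerate and non-degenerate branches the contradiction becomes $0=a-b\leq -\sigma/(T-s_\varepsilon)^2<0$, which is if anything cleaner than what you wrote --- but the line as stated is off. With that correction the outline matches the cited proof.
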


	\section{Comparison functions}
	\label{sec:comp}
	Comparison functions are used in the standard proof for the intrinsic Harnack's inequality for the divergence form equation to expand the positivity set around the supremum point using the comparison principle. In the degenerate case, a single Barenblatt-type solution is enough to get the estimate but in the singular case, we need two separate subsolutions. The Barenblatt solutions do not have compact support in the singular range and thus we need to find another type of comparison function. Because of the connection of the equation \eqref{eq:rgnppar} and the usual $p$-parabolic equation examined in \cite{Parviainen2020}, we can use similar comparison functions as DiBenedetto in his proof for the singular range. We will need three different subsolutions to handle the singular case and values of $q$ near $q=2$.
	We denote throughout this section
	\begin{equation*}
	\eta:=\frac{p-1}{q-1}
	\end{equation*}
	which is the time-scaling constant connecting \eqref{eq:rgnppar} to the usual $q$-parabolic equation in the radial case.
	
	Assume $q<2$. We will use the following subsolution which is a time-rescaled version of the solution used in the $p$-parabolic case by DiBenedetto \cite[VII.7]{Dibenedetto1993}. 
	Let
	\begin{equation}
	\label{eq:compfunc}
	\Phi(x,t):=\frac{\kappa\rho^{q\xi}}{R(t)^\xi}\left(1-\left(\frac{\abs{x}^q}{R(t)}\right)^{\frac{1}{q-1}}\right)_+^2,
	\end{equation}
	where
	\begin{equation*}
	R(t):=\eta\kappa^{q-2}t+\rho^q
	\end{equation*} and $\kappa$ and $\rho$ are positive parameters and $\xi>1$ is chosen independent of $\kappa$ and $\rho$. By $(\cdot)_+$ we denote the positive part of the function inside the bracket. 
	
	By construction $\supp \Phi(\cdot,0)=B_\rho(0)$ and for $t\geq0$, we get the expanding balls
	\begin{equation*}
	\supp \Phi(\cdot,t)=B_{R(t)^{\frac{1}{q}}}(0)
	\end{equation*}
	and the estimate
	\begin{equation}
	\label{eq:compupperbound}
	\Phi(x,0)\leq\Phi(x,t)\leq\kappa \quad \text{ for } t\geq 0.
	\end{equation}
	We examine $\Phi$ in the domains
	\begin{equation}
	\label{eq:compdomain}
	\mathcal{P}_{\kappa,\xi}:=B_{R(t)^\frac{1}{q}}(0)\times\left(0,\frac{\kappa^{2-q}\rho^q}{\eta\xi}\right).
	\end{equation}
	We have $\Phi\in C^\infty(\mathcal{P}_{\kappa,\xi})\cap C(\overline{\mathcal{P}_{\kappa,\xi}})$ and as we see in the following lemma, we can choose the constant $\xi$ to make $\Phi$ a viscosity subsolution to \eqref{eq:rgnppar} in this set. 
	\begin{lemma}
		\label{le:compfunc}
		Let the range condition \eqref{eq:range} hold and $q<2$. There exists a constant $\xi:=\xi(n,p,q)$ so that $\Phi$ is a viscosity subsolution to \eqref{eq:rgnppar} in  $\Rn\times\left(0,\frac{\kappa^{2-q}\rho^q}{\eta\xi}\right)$.
	\end{lemma}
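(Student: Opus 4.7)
The approach exploits the radial symmetry of $\Phi$ together with the reduction, established in \cite{Parviainen2020}, of the operator $\Delta_p^q$ on radial profiles to a one-dimensional operator in a \emph{fictitious dimension} $d := 1 + \frac{n-1}{p-1}$; namely, for a radial $u(x,t) = U(|x|, t)$ one has
\[
\Delta_p^q u = (p-1)\abs{U_r}^{q-2}\Bigl(U_{rr} + \tfrac{d-1}{r}U_r\Bigr).
\]
On the open set $\{(x,t) : 0 < \abs{x}^q < R(t)\}$ where $\Phi$ is smooth and $\nabla\Phi \neq 0$, the plan is to verify the classical inequality $\partial_t\Phi - \Delta_p^q\Phi \leq 0$. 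The remaining task is a separate viscosity check at the two bad loci: the origin, where $\nabla\Phi = 0$, and the support boundary $\abs{x}^q = R(t)$, where both $\Phi$ and $\nabla\Phi$ vanish because of the squared factor.

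For the classical verification, write $\Phi = A(t)\phi(\sigma)$ with $A(t) = \kappa\rho^{q\xi}R(t)^{-\xi}$, $\sigma = r^q/R(t)$, and $\phi(\sigma) = (1-\sigma^{1/(q-1)})^2$. Differentiation gives $\partial_t\Phi$ as a sum of two terms each proportional to $AR'/R$, and the radial formula expresses $\Delta_p^q\Phi$ in terms of $A, R, R'$, the parameters $n,p,q$, and the values of $\phi(\sigma), \phi'(\sigma), \phi''(\sigma)$ with powers of $\sigma$. Using the identity $R'(t) = \eta\kappa^{q-2}$, all dependence on $\kappa$, $\rho$ and $t$ factors through the common prefactor $A(t)R'(t)/R(t)$, and the inequality to be proved reduces to a pointwise inequality in the single variable $\sigma \in (0,1)$ whose left-hand side contains the dominant term $-\xi\phi(\sigma)$ coming from the time derivative. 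Since $\phi$ is bounded below by a positive constant on $[0,\tfrac12]$ and since the squared factor in $\phi$ combined with the range condition \eqref{eq:range} keeps the remaining spatial terms uniformly bounded on $(0,1)$, one selects $\xi = \xi(n,p,q)$ large enough for the $-\xi\phi$ term to dominate everywhere.

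For the viscosity verification at the bad points two cases arise. At $x_0 = 0$, the function $\Phi(\cdot, t_0)$ attains its spatial maximum, so any admissible $\vp$ touching $\Phi$ from above at $(0,t_0)$ necessarily has $\nabla\vp(0,t_0) = 0$. The subsolution requirement in this case reads $\partial_t\vp(0,t_0) \leq 0$, and this holds because $\vp(0,\cdot) - \Phi(0,\cdot)$ has a local minimum at $t_0$, forcing $\partial_t\vp(0,t_0) = \partial_t\Phi(0,t_0) = -\xi\eta\kappa^{q-1}\rho^{q\xi}R(t_0)^{-\xi-1} < 0$. On the support boundary, $\Phi(x_0,t_0) = 0$ and $\nabla\Phi(x_0,t_0) = 0$ because $\phi'(1^-) = 0$, so a test function $\vp \geq \Phi$ with $\vp(x_0,t_0) = 0$ sits above zero in a neighborhood, attains a local minimum at $(x_0,t_0)$, and thus has $\nabla\vp(x_0,t_0) = 0$ and $\partial_t\vp(x_0,t_0) = 0$, making the viscosity inequality trivial.

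The main obstacle lies in the algebra of the classical step: ensuring that after all cancellations, the prescription of $\xi$ depends only on $n,p,q$ and is genuinely uniform in $\kappa$ and $\rho$. The structural role of \eqref{eq:range} is to prevent the exponents arising from $\phi''$ near $\sigma = 0$ (which involve powers of $\sigma^{1/(q-1)-2}$) from producing an unbounded remainder, and to guarantee that the fictitious dimension $d-1 = (n-1)/(p-1)$ combines with $q$ so that the extra $\tfrac{d-1}{r}U_r$ term does not overwhelm the $\xi\phi$ dominance. Since the computation mirrors the analogous one carried out by DiBenedetto for the $p$-parabolic case in \cite[VII.7]{Dibenedetto1993}, the novelty is only in tracking the additional $(p-1)$ factor and the shift $n \mapsto d$ produced by the radial reduction.
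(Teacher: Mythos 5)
Your overall plan — radial reduction, classical verification off the critical set, and then a viscosity check at the origin and the lateral support boundary — follows the same route as the paper, and your handling of the two critical loci via the observation that a touching test function is forced to have a local minimum there (hence $\nabla\varphi=0$ and $\partial_t\varphi=0$) is sound. The gap lies in the classical step.

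You claim that ``the squared factor in $\phi$ combined with the range condition keeps the remaining spatial terms uniformly bounded on $(0,1)$'' and that one can then pick $\xi$ so large that $-\xi\phi$ ``dominates everywhere.'' Both assertions fail near $\sigma \to 1$. Writing $\mathcal{F} := 1-\sigma^{1/(q-1)}$, the spatial part of the scaled expression contains a factor $|\phi'|^{q-2} \sim \mathcal{F}^{\,q-2}$ which blows up as $\mathcal{F}\to 0$ (because $q<2$), and a bracket containing $-q\sigma^{1/(q-1)}/\mathcal{F}$ which tends to $-\infty$. So the spatial contribution is unbounded, not bounded; it does head towards $-\infty$, which is the right sign, but that is a fact you have not established and cannot obtain from your stated mechanism. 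Worse, $-\xi\phi \sim -\xi\mathcal{F}^2 \to 0$ as $\mathcal{F}\to 0$, so no choice of $\xi$ can make that term ``dominate'' in this regime — it vanishes. The paper resolves exactly this by a two-region argument: on $\{\mathcal{F}<\delta\}$, a $\delta$ small enough (depending only on $n,p,q$) makes the bracket negative and the $\mathcal{F}^{q-2}$ blowup then drives the spatial part negative by itself, \emph{independently of $\xi$}; the range condition \eqref{eq:range} is used there. On $\{\mathcal{F}\geq\delta\}$, $\mathcal{F}$ is bounded below, the spatial terms are now genuinely bounded, and $\xi$ can be chosen large to make $-\xi\mathcal{F}$ dominant. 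Without this case split your verification near the support boundary is not established, so the proof is incomplete.

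Two minor points: you do not address extending the subsolution property from the support of $\Phi$ to all of $\mathbb{R}^n\times(0,\kappa^{2-q}\rho^q/(\eta\xi))$ by gluing with the zero function (which is trivial but should be stated), and you do not check that the time interval $\left(0,\kappa^{2-q}\rho^q/(\eta\xi)\right)$ is short enough to keep the factor $\left(R(t)^{\xi}/\rho^{q\xi}\right)^{2-q}$ under control, which is how the paper ensures the $\xi$-dominance estimate in $\mathcal{E}_2$ is uniform.
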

	\begin{proof}
		The function $\Phi\equiv0$ outside $\mathcal{P}_{\kappa,\xi}$, so it is enough for us to check that $\Phi$ is a viscosity subsolution on the boundary and inside this set.
		Let us first look at the points where $\nabla\Phi\not=0$, because here we can use the radiality of $\Phi$ in spatial coordinates and a simple calculation to simplify our statement to the form
		\begin{equation}
			\label{eq:radcompnot1}
		\partial_t\phi-\abs{\phi'}^{q-2}\left((p-1)\phi'' +\phi'\frac{n-1}{r}\right)\leq 0 \quad \text{ in } \mathcal{P}_{\kappa,\xi}':=\left(0,R(t)^\frac{1}{q}\right)\times\left(0,\frac{\kappa^{2-q}\rho^q}{\eta\xi}\right)
		\end{equation}
		where
		\begin{equation}
		\label{eq:radcompfunc}
		\phi(r,t):=\frac{\kappa\rho^{q\xi}}{R(t)^\xi}\left(1-\left(\frac{r^q}{R(t)}\right)^{\frac{1}{q-1}}\right)_+^2.
		\end{equation}
		We use the following notation during the calculation
		\begin{equation}
		\label{eq:radcompnot}
		R(t):=\eta\kappa^{q-2}t+\rho^q,\quad \mathcal{F}:=1-z^{\frac{1}{q-1}},\quad z:=\frac{r^q}{R(t)},\quad A:=\frac{\kappa\rho^{q\xi}}{R(t)^\xi}.
		\end{equation}
		By direct calculation inside $\mathcal{P}_{\kappa,\xi}'$, we have
		\begin{align*}
		\mathcal{F}'&=-\frac{1}{q-1}z^{\frac{1}{q-1}-1}\frac{qr^{q-1}}{R(t)}=-\frac{q}{q-1}\frac{z^{\frac{1}{q-1}}}{r}\\
		\mathcal{F}''&=-\frac{q}{q-1}\left(\frac{q}{q-1}\frac{z^{\frac{1}{q-1}}}{r^2}-\frac{z^{\frac{1}{q-1}}}{r^2}\right)=-\frac{q}{(q-1)^2}\frac{z^{\frac{1}{q-1}}}{r^2}\\
		\phi'&=2A\mathcal{F}\mathcal{F}'=-2A\mathcal{F}\frac{q}{q-1}\frac{z^{\frac{1}{q-1}}}{r} \\
		\phi''&=2A\left((\mathcal{F}')^2+\mathcal{F}\mathcal{F}''\right)=2A\left(\frac{q^2}{(q-1)^2}\frac{z^{\frac{2}{q-1}}}{r^2}-\mathcal{F}\frac{q}{(q-1)^2}\frac{z^{\frac{1}{q-1}}}{r^2}\right)\\
		&=2A\frac{q}{(q-1)^2}\left(qz^{\frac{1}{q-1}}-\mathcal{F}\right)\frac{z^{\frac{1}{q-1}}}{r^2}
		\end{align*}
		Moreover,
		\begin{align*}
		\label{eq:compfunc3}
		\partial_t\phi&=-\frac{\xi\kappa\rho^{q\xi}}{R(t)^{\xi+1}}\mathcal{F}^2\eta\kappa^{q-2}+2A\mathcal{F}\frac{1}{q-1}z^{\frac{1}{q-1}-1}\frac{r^q}{R(t)^2}\eta\kappa^{q-2} \\
		&=-\frac{\xi\eta\kappa^{q-1}\rho^{q\xi}}{R(t)^{\xi+1}}\mathcal{F}^2+\frac{\eta\kappa^{q-1}\rho^{q\xi}}{R(t)^{\xi+1}}\mathcal{F}\frac{2}{q-1}z^{\frac{1}{q-1}}. \numberthis
		\end{align*}
		Define an operator $\mathcal{L}:C^2(\R)\to\R$ by
		\begin{equation*}
		\mathcal{L}(\phi):=\frac{R(t)^{\xi+1}}{\eta\kappa^{q-1}\rho^{q\xi}\mathcal{F}}\left(\partial_{t}\phi-\abs{\phi'}^{q-2}((p-1)\phi'' +\phi'\frac{n-1}{r})\right).
		\end{equation*}
		By the calculations above, we have
		\begin{align*}
		\mathcal{L}(\phi)&=-\xi\mathcal{F}+\frac{2}{q-1}z^{\frac{1}{q-1}}-\abs{\phi'}^{q-2}2\frac{\kappa^{2-q} R(t)}{\eta\mathcal{F}}\frac{q}{q-1}\left(\eta\left(qz^{\frac{1}{q-1}}-\mathcal{F}\right)\frac{z^{\frac{1}{q-1}}}{r^2}-\mathcal{F}\frac{z^{\frac{1}{q-1}}}{r}\frac{n-1}{r}\right) \\
		&=-\xi\mathcal{F}+\frac{2}{q-1}z^{\frac{1}{q-1}}+\abs{\phi'}^{q-2}2\frac{\kappa^{2-q} R(t)}{\eta}\frac{q}{q-1}\frac{z^{\frac{1}{q-1}}}{r^2}\left(\eta\left(1-\frac{qz^{\frac{1}{q-1}}}{\mathcal{F}}\right)+n-1\right)\\
		&=-\xi\mathcal{F}+\frac{2}{q-1}z^{\frac{1}{q-1}}+\abs{2A\mathcal{F}\frac{q}{q-1}\frac{z^{\frac{1}{q-1}}}{r}}^{q-2}2\kappa^{2-q} R(t)\frac{q}{q-1}\frac{z^{\frac{1}{q-1}}}{r^2}\left(\frac{n-1}{\eta}+1-\frac{qz^{\frac{1}{q-1}}}{\mathcal{F}}\right)\\
		&=-\xi\mathcal{F}+\frac{2}{q-1}z^{\frac{1}{q-1}}+(2A\mathcal{F})^{q-2}\left(\frac{q}{q-1}\right)^{q-1}2\kappa^{2-q} R(t)\frac{z^{\frac{q-2}{q-1}}}{r^{q-2}}\frac{z^{\frac{1}{q-1}}}{r^2}\left(C_1-\frac{qz^{\frac{1}{q-1}}}{\mathcal{F}}\right)\\
		&=-\xi\mathcal{F}+\frac{2}{q-1}z^{\frac{1}{q-1}}+\left(\frac{2q}{q-1}\right)^{q-1}\left(\frac{\rho^{q\xi}}{R(t)^\xi}\mathcal{F}\right)^{q-2}\left(\frac{n-1}{\eta}+1-\frac{qz^{\frac{1}{q-1}}}{\mathcal{F}}\right).
		\end{align*}
		
		Introduce the two sets
		\begin{equation*}
		\mathcal{E}_1:=\left\{(r,t)\in\mathcal{P}_{k,\xi}'\mid \mathcal{F}<\delta \right\}, \qquad \mathcal{E}_2:=\left\{(r,t)\in\mathcal{P}_{k,\xi}'\mid \mathcal{F}\geq\delta \right\}
		\end{equation*}
		where $\delta>0$ is a constant to be chosen. Now inside $\mathcal{E}_1$ we can estimate what is inside the last brackets from above
		\begin{equation*}
		\left(\frac{n-1}{\eta}+1-\frac{qz^{\frac{1}{q-1}}}{\mathcal{F}}\right)\leq\frac{n-1}{\eta}+1-\frac{q}{\mathcal{F}}\leq\frac{n-1}{\eta}+1-\frac{q}{\delta}<0
		\end{equation*}
		if we choose $\delta$ small enough that the last inequality holds. Notice also that both $\mathcal{F}\in[0,1]$ and $\frac{\rho^{q\xi}}{R(t)^\xi}\in[0,1]$ and thus
		\begin{equation*}
		\left(\frac{\rho^{q\xi}}{R(t)^\xi}\mathcal{F}\right)^{q-2}\geq1
		\end{equation*} by our assumption $q<2$. Thus inside $\mathcal{E}_1$ 
		\begin{align*}
		\label{eq:compfunc1}
		\mathcal{L}(\phi)&\leq\frac{2}{q-1}+\left(\frac{2q}{q-1}\right)^{q-1}\left(\frac{n-1}{\eta}+1-\frac{q}{\delta}\right)<0.
		\numberthis
		\end{align*}
		Here we can choose $\delta$ to be small enough to guarantee that the right side of the equation is negative and this can be done without dependence on $\xi$.
		
		Let us next focus on $\mathcal{E}_2$. By the range of $t$, we have
		\begin{equation*}
		\left(\frac{R(t)^\xi}{\rho^{q\xi}}\frac{1}{\mathcal{F}}\right)^{2-q}\leq\left(\frac{\left(\eta\kappa^{q-2}\frac{\kappa^{2-q}\rho^q}{\eta\xi}+\rho^q\right)^\xi}{\rho^{q\xi}}\frac{1}{\mathcal{F}}\right)^{2-q}\leq\left(\frac{\xi+1}{\xi}\right)^{\xi(2-q)}\delta^{q-2}\leq\left(\frac{e}{\delta}\right)^{q-2}
		\end{equation*}
		and thus for $\delta$ we chose above, we have 
		\begin{align*}
		\label{eq:compfunc2}
		\mathcal{L}(\phi)&\leq-\xi\mathcal{F}+\frac{2}{q-1}\left(\frac{R(t)^\xi}{\rho^{q\xi}}\frac{1}{\mathcal{F}}\right)^{2-q}\left(\frac{n-1}{\eta}+1\right)\\&\leq-\xi\mathcal{F}+\frac{2}{q-1}\left(\frac{e}{\delta}\right)^{q-2}\left(\frac{n-1}{\eta}+1\right)\numberthis
		\end{align*}
		in $\mathcal{E}_2$. We can now choose $\xi$ to be large enough to guarantee that the right side is negative. Thus combining the estimates \eqref{eq:compfunc1} and \eqref{eq:compfunc2}, we have that $\mathcal{L}(\phi)\leq 0$ in the entire $\mathcal{P}_{k,\xi}$ and thus by just multiplying the positive scaling factor in the definition of $\mathcal{L}$ away, we have that $\phi$ is a classical subsolution.
		
		We still need to check the points where $\nabla\Phi=0$, because there the simplification we did earlier in \eqref{eq:radcompnot1} does not hold. Also because of the singular nature of the equation \eqref{eq:rgnppar}, the concept of a classical solution does not really make sense at these points and we need to use the definition of viscosity solutions.
		By similar calculation to the radial case using the same notation, we have
		\begin{equation*}
		\abs{\nabla\Phi(x,t)}=\abs{-2A\mathcal{F}\left(\frac{q}{q-1}\right)\frac{\abs{x}^{\frac{q}{q-1}-2}}{R(t)^{\frac{1}{q-1}}}x}=\frac{2A}{R(t)^{\frac{1}{q-1}}}\left(\frac{q}{q-1}\right)\mathcal{F}\abs{x}^{\frac{1}{q-1}}
		\end{equation*}
		and thus the gradient vanishes at the origin as $\frac{1}{q-1}>0$ and in the set $\partial B_{R(t)^{\frac{1}{q}}}(0)\times\left(0,\frac{\kappa^{2-q}\rho^q}{\xi}\right)$ as there $\mathcal{F}=0$. This latter set happens to be the lateral boundary of the support of $\Phi$. By our previous calculation \eqref{eq:compfunc3}, the time derivative of $\Phi$ is
		\begin{equation}
		\partial_t\Phi=-\frac{\xi\kappa^{q-1}\rho^{q\xi}}{R(t)^{\xi+1}}\mathcal{F}^2+\frac{\kappa^{q-1}\rho^{q\xi}}{R(t)^{\xi+1}}\mathcal{F}\frac{2}{q-1}z^{\frac{1}{q-1}},
		\end{equation}
		which clearly satisfies $\partial_t\Phi\leq0$ at the critical points as the first term is negative and the second is zero if either $z=0$ or $\mathcal{F}=0$.
		Let $\vp\in C^2$ be an admissible test function touching $\Phi$ at a critical point $(x,t)$ from above. For any such function $\partial_t\vp(x,t)=\partial_t\Phi(x,t)\leq0$ and thus $\Phi$ is a viscosity subsolution in $\mathcal{P}_{\kappa,\xi}$. The zero function is also a viscosity subsolution so $\Phi$ is a viscosity subsolution in the entire $\Rn\times\left(0,\frac{\kappa^{2-q}\rho^q}{\eta\xi}\right)$ as we already verified the boundary.
	\end{proof}
	The comparison function $\Phi$ defined in \eqref{eq:compfunc} does not give us stable constants as $q\to2$ because the radius we use it for blows up. We can extend the proof of the degenerate case slightly below $q=2$ with a different comparison function and use this to get stable constants in our inequality for the whole range \eqref{eq:range}. Let $\rho$ and $\kappa$ be positive parameters and define the function
	\begin{equation}
	\label{eq:nearcompfunc}
	\mathcal{G}(x,t):=\frac{\kappa \rho^{\frac{\nu}{\lambda(\nu)}}}{\Sigma(t)^\nu}\left(1-\left(\frac{\abs{x}}{\Sigma(t)^{\lambda(\nu)}}\right)^{\frac{q}{q-1}}\right)_+^{\frac{q}{q-1}},
	\end{equation}
	where 
	\begin{equation*}
	\Sigma(t):=\eta\kappa^{q-2}\rho^{(q-2)\frac{\nu}{\lambda(\nu)}}t+\rho^{\frac{1}{\lambda(\nu)}}, \quad t\geq0.
	\end{equation*}Here $\nu>1$ is a constant and
	\begin{equation}
	\label{eq:numlambda}
	\lambda(\nu):=\frac{1-\nu(q-2)}{q}.
	\end{equation}
	The function \eqref{eq:nearcompfunc} is a time-rescaled version of the comparison function introduced by DiBenedetto in \cite[VII 3(i)]{Dibenedetto1993}.
	We also introduce a number
	\begin{equation}
	\label{eq:numq}
	q(\nu):=\frac{4(1+2\nu)}{1+4\nu}.
	\end{equation}
	This number $q(\nu)$ will define the size of the interval around $q=2$, where $\mathcal{G}$ is a viscosity subsolution.
	\begin{lemma}
		\label{le:compnear2}
		Let  $q\in(4-q(\nu),7/3)$. There exists a $\nu:=\nu(n,p)>1$ independent of $q$ such that $\mathcal{G}$ is a viscosity subsolution to \eqref{eq:rgnppar} in $\Rn\times\R^+$.
	\end{lemma}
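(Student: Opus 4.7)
The strategy mirrors Lemma \ref{le:compfunc}: reduce the PDE to its radial form, split into a region near the boundary of the support and a region bounded away from it, and treat critical points separately with admissible test functions. Since $\mathcal{G}$ is radial in $x$, I would first write $\mathcal{G}(x,t) = g(|x|, t)$, and at points where $g' \neq 0$ the subsolution condition reduces to
\[
\partial_t g - |g'|^{q-2}\Bigl((p-1)g'' + \tfrac{n-1}{r} g'\Bigr) \leq 0.
\]
Introducing $z := |x|/\Sigma(t)^{\lambda(\nu)}$, $\mathcal{F} := 1 - z^{q/(q-1)}$, and $A := \kappa \rho^{\nu/\lambda(\nu)}/\Sigma(t)^\nu$ (in analogy with the notation of \eqref{eq:radcompnot}), a direct computation of $g'$, $g''$, and $\partial_t g$ rewrites the inequality, after multiplication by a positive scaling factor, in the form $\mathcal{L}(g) \leq 0$, where $\mathcal{L}(g)$ decomposes into a dominant $-\nu \mathcal{F}$ piece plus lower-order contributions involving $z^{1/(q-1)}/\mathcal{F}$ and $(n-1)/\eta$.

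The key step is to split the radial interior $\mathcal{P}' := \{(r,t) : 0 < r < \Sigma(t)^{\lambda(\nu)}, t > 0\}$ into $\mathcal{E}_1 := \{\mathcal{F} < \delta\}$ and $\mathcal{E}_2 := \{\mathcal{F} \geq \delta\}$, for $\delta = \delta(n,p,q) \in (0,1)$ to be chosen. On $\mathcal{E}_1$, the term carrying the factor $-q z^{1/(q-1)}/\mathcal{F}$ (coming from the radial Laplacian, as in the previous lemma) is dominant, and choosing $\delta$ small makes $\mathcal{L}(g) \leq 0$ independently of $\nu$. On $\mathcal{E}_2$ one uses that on any fixed time interval the ratio $\Sigma(t)^{\nu}/\rho^{\nu/\lambda(\nu)}$ is controlled by a constant depending only on $\nu$, so that all remaining lower-order terms are bounded; then $\nu$ is taken large enough that $-\nu \delta$ dominates, which by construction depends only on $n$ and $p$. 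The constraint $q > 4 - q(\nu)$ is forced at exactly this stage, since it is equivalent to $\lambda(\nu) > 0$ and to the required positivity of the exponents entering the $\Sigma$-bounds, while $q < 7/3$ is what keeps the algebraic signs in $\mathcal{E}_1$ correct.

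Finally, I would handle the critical set $\{\nabla \mathcal{G} = 0\}$, which consists of the origin together with the lateral boundary $\{|x| = \Sigma(t)^{\lambda(\nu)}\}$. At each such point the explicit formula gives $\partial_t \mathcal{G} \leq 0$, since the $-\nu$-weighted piece is non-positive and the remaining piece vanishes (either $z = 0$ or $\mathcal{F} = 0$). Any admissible $\varphi \in C^2$ touching $\mathcal{G}$ from above at such a critical point satisfies $\partial_t \varphi = \partial_t \mathcal{G} \leq 0$, which is precisely the viscosity subsolution condition when $\nabla \varphi = 0$. Outside the support $\mathcal{G} \equiv 0$ is trivially a subsolution, and since the two match continuously across the lateral boundary the same glueing argument as in Lemma \ref{le:compfunc} yields that $\mathcal{G}$ is a viscosity subsolution in $\Rn \times \R^+$.

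The main obstacle will be the careful bookkeeping: the exponents $\lambda(\nu)$, $q/(q-1)$, $\nu/\lambda(\nu)$ and $(q-2)\nu/\lambda(\nu)$ interact nontrivially, and one must show that $\nu = \nu(n,p)$ can be fixed \emph{first}, independently of $q$, so that the interval $(4 - q(\nu), 7/3)$ is a nondegenerate neighborhood of $q=2$ and the resulting constants are stable as $q \to 2$. In particular, the specific form \eqref{eq:numq} of $q(\nu)$ should drop out of the sharp version of the $\mathcal{E}_1$ estimate.
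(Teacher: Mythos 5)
Your high-level skeleton (radial reduction, two-region split based on the size of $\mathcal{F}$, separate treatment of critical points) is right, but the algebraic mechanism you describe is the one from Lemma~\ref{le:compfunc} for $\Phi$, not the one that actually works for $\mathcal{G}$, and the difference is the whole point of the lemma.

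First, your description of the operator is wrong for $\mathcal{G}$: after multiplying by $\Sigma(t)^{\nu+1}/(\kappa\rho^{\nu/\lambda(\nu)})^{q-1}$ the expression becomes
\[
\mathcal{L}(g)=-\nu\,\eta\,\mathcal{F}^{\frac{q}{q-1}}+a\lambda(\nu)\eta\,\mathcal{F}^{\frac{1}{q-1}}z^{\frac{q}{q-1}}+a^{q-1}\left(C_1\mathcal{F}-C_2 z^{\frac{q}{q-1}}\right),
\]
with $a=(q/(q-1))^2$, $C_1=\frac{(n-1)(q-1)+p-1}{q-1}$, $C_2=\frac{q(p-1)}{(q-1)^2}$. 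There is no $z^{1/(q-1)}/\mathcal{F}$ factor and no blow-up: unlike $\Phi$, where the prefactor $(2A\mathcal{F})^{q-2}$ diverges as $\mathcal{F}\to 0$ (using $q<2$), here the coefficients are uniformly bounded. On $\mathcal{E}_1$ the term that is large and negative is $-a^{q-1}C_2 z^{q/(q-1)}$, a sign argument driven by a threshold choice, not a singular dominance. Relatedly, the constraint $q<7/3$ enters through $\lambda(\nu)\leq 7/8$ (which is what $q>4-q(\nu)$ guarantees via monotonicity of $\lambda$ in $q$); $q>4-q(\nu)$ is \emph{not} equivalent to $\lambda(\nu)>0$, which is an upper bound $q<2+1/\nu$ and is handled elsewhere.

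The more serious gap is your $\mathcal{E}_2$ argument. You propose to bound $\Sigma(t)^{\nu}/\rho^{\nu/\lambda(\nu)}$ ``on any fixed time interval''. But the defining property of $\lambda(\nu)=\frac{1-\nu(q-2)}{q}$ is precisely that the $\Sigma(t)$-exponent in $\mathcal{L}(g)$, namely $(2-q)\nu+1-q\lambda(\nu)$, is \emph{identically zero}, so $\mathcal{L}(g)$ has no $t$-dependence whatsoever. You never need to restrict the time interval, and indeed you cannot afford to: the lemma asserts the subsolution property in all of $\mathbb{R}^n\times\mathbb{R}^+$, and a bound valid only on a fixed time slab would give a strictly weaker conclusion. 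Without noticing this exact cancellation (which is the reason for the specific form of $\lambda(\nu)$), your proposed argument does not establish the statement as claimed. The critical-point analysis at $\{x=0\}$ and on the lateral boundary is fine.
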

	\begin{proof} We prove this statement by first showing that $\mathcal{G}$ is a classical subsolution in the support of this function
		\begin{equation*}
		\mathcal{S}:=\supp\mathcal{G}=\left\{(x,t)\in\Rn\times\R^+\Big|\abs{x}<\Sigma(t)^{\lambda(\nu)}, t>0\right\}
		\end{equation*} apart from the points where $\nabla\mathcal{G}=0$ and dealing with the boundary and rest of the space afterward. The function $\mathcal{G}$ is radial with respect to space and thus we can perform our calculations in radial coordinates. Define
		\begin{equation}
		\label{eq:rnearcompfunc}
		g(r,t):=\frac{\kappa \rho^{\frac{\nu}{\lambda(\nu)}}}{\Sigma(t)^\nu}\left(1-\left(\frac{r}{\Sigma(t)^{\lambda(\nu)}}\right)^{\frac{q}{q-1}}\right)_+^{\frac{q}{q-1}},
		\end{equation}
		and 
		\begin{equation*}
		z:=\frac{r}{\Sigma(t)^{\lambda(\nu)}}, \quad \mathcal{F}:=(1-z^{\frac{q}{q-1}})_+, \quad a:=\left(\frac{q}{q-1}\right)^2,  \quad A:=\frac{\kappa \rho^{\frac{\nu}{\lambda(\nu)}}}{\Sigma(t)^\nu},\quad \mathcal{S'}:=\supp g.
		\end{equation*}
		Again whenever $g'\not=0$, we can use the radiality and a quick calculation to simplify our statement to the form
		\begin{equation}
		\partial_tg-\abs{g'}^{q-2}\left((p-1)g'' +g'\frac{n-1}{r}\right)\leq 0 \quad \text{ in } \R\times(0,\infty).
		\end{equation}
		
		Inside $\mathcal{S'}$, we have
		\begin{align*}
		\partial_{t}A&=-\nu\frac{\kappa \rho^{\frac{\nu}{\lambda(\nu)}}}{\Sigma(t)^{\nu+1}}\eta\kappa^{q-2}\rho^{(q-2)\frac{\nu}{\lambda(\nu)}}=-\nu\frac{ \rho^{(q-1)\frac{\nu}{\lambda(\nu)}}}{\Sigma(t)^{\nu+1}}\eta\kappa^{q-1}\\
		\partial_{t}\mathcal{F}&=-\frac{q}{q-1}z^{\frac{1}{q-1}}r\frac{-\lambda(\nu)}{\Sigma(t)^{\lambda(\nu)+1}}\eta\kappa^{q-2}\rho^{(q-2)\frac{\nu}{\lambda(\nu)}}
		\end{align*}
		and thus
		\begin{align*}
		\label{eq:nearcompfunc4}
		\partial_{t}g&=-\nu\frac{ \rho^{(q-1)\frac{\nu}{\lambda(\nu)}}}{\Sigma(t)^{\nu+1}}\kappa^{q-1}\eta\mathcal{F}^{\frac{q}{q-1}}+A\frac{q}{q-1}\mathcal{F}^{\frac{1}{q-1}}\left(-\frac{q}{q-1}z^{\frac{1}{q-1}}r\frac{-\lambda(\nu)}{\Sigma(t)^{\lambda(\nu)+1}}\eta\kappa^{q-2}\rho^{(q-2)\frac{\nu}{\lambda(\nu)}}\right)
		\\
		&=-\nu\frac{ \left(\kappa \rho^{\frac{\nu}{\lambda(\nu)}}\right)^{q-1}}{\Sigma(t)^{\nu+1}}\eta\mathcal{F}^{\frac{q}{q-1}}+Aa\mathcal{F}^{\frac{1}{q-1}}z^{\frac{q}{q-1}}\frac{\lambda(\nu)}{\Sigma(t)}\eta\left(\kappa \rho^{\frac{\nu}{\lambda(\nu)}}\right)^{q-2}\\
		&=\frac{ \left(\kappa \rho^{\frac{\nu}{\lambda(\nu)}}\right)^{q-1}}{\Sigma(t)^{\nu+1}}\left(-\nu \eta\mathcal{F}+a\lambda(\nu)\eta\mathcal{F}^{\frac{1}{q-1}}z^{\frac{q}{q-1}}\right). \numberthis
		\end{align*}
		For the spatial derivatives, we have
		\begin{align*}
		g'&=A\frac{q}{q-1}\mathcal{F}^{\frac{1}{q-1}}\mathcal{F}'=-Aa\mathcal{F}^{\frac{1}{q-1}}\frac{z^{\frac{1}{q-1}}}{\Sigma(t)^{\lambda(\nu)}}\\
		g''&=A\frac{q}{q-1}\left(\frac{1}{q-1}\mathcal{F}^{\frac{1}{q-1}-1}(\mathcal{F}')^2+\mathcal{F}^{\frac{1}{q-1}}\mathcal{F}''\right)\\
		&=A\frac{q}{q-1}\left(\frac{q^2}{(q-1)^3}\mathcal{F}^{\frac{1}{q-1}-1}\frac{z^{\frac{2}{q-1}}}{\Sigma(t)^{2\lambda(\nu)}}-\mathcal{F}^{\frac{1}{q-1}}\frac{q}{(q-1)^2}\frac{z^{\frac{1}{q-1}-1}}{\Sigma(t)^{2\lambda(\nu)}}\right)\\
		&=\frac{Aa}{q-1}\left(\frac{q}{q-1}z^{\frac{q}{q-1}}-\mathcal{F}\right)z^{\frac{1}{q-1}-1}\frac{\mathcal{F}^{\frac{1}{q-1}-1}}{\Sigma(t)^{2\lambda(\nu)}}
		\end{align*}
		so finally
		\begin{align*}
		\abs{g'}^{q-2}&\left((p-1)g'' +g'\frac{n-1}{r}\right)	\\&=\left(\frac{Aa}{\Sigma(t)^{\lambda(\nu)}}\right)^{q-2}\left(\mathcal{F}z\right)^{\frac{q-2}{q-1}}\left(Aa\frac{p-1}{q-1}\left(\frac{q}{q-1}z^{\frac{q}{q-1}}-\mathcal{F}\right)z^{\frac{1}{q-1}-1}\frac{\mathcal{F}^{\frac{1}{q-1}-1}}{\Sigma(t)^{2\lambda(\nu)}}-Aa\mathcal{F}^{\frac{1}{q-1}}\frac{z^{\frac{1}{q-1}}}{\Sigma(t)^{\lambda(\nu)}}\frac{n-1}{r}\right)\\
		&=\frac{(Aa)^{q-1}}{\Sigma(t)^{(q-2)\lambda(\nu)}}\left(\frac{p-1}{q-1}\left(\frac{q}{q-1}z^{\frac{q}{q-1}}-\mathcal{F}\right)\frac{1}{\Sigma(t)^{2\lambda(\nu)}}-\mathcal{F}\frac{r}{\Sigma(t)^{2\lambda(\nu)}}\frac{n-1}{r}\right)\\
		&=\frac{(Aa)^{q-1}}{\Sigma(t)^{q\lambda(\nu)}}\left(C_2z^{\frac{q}{q-1}}-C_1\mathcal{F}\right)
		\end{align*}
		for constants $C_1=\frac{(n-1)(q-1)+p-1}{q-1}$ and $C_2=\frac{q(p-1)}{(q-1)^2}$.
		We define an operator $\mathcal{L}:C^2(\R)\to\R$ by
		\begin{equation}
		\mathcal{L}(g):=\frac{\Sigma(t)^{\nu+1}}{\left(\kappa \rho^{\frac{\nu}{\lambda(\nu)}}\right)^{q-1}}\left(\partial_tg-\abs{g'}^{q-2}\left((p-1)g'' +g'\frac{n-1}{r}\right)\right).
		\end{equation}
		Therefore by the calculation above, we have
		\begin{align*}
		\label{eq:nearcompfunc1}
		\mathcal{L}(g)&=-\nu \eta\mathcal{F}^{\frac{q}{q-1}}+a\lambda(\nu)\eta\mathcal{F}^{\frac{1}{q-1}}z^{\frac{q}{q-1}}-\frac{\Sigma(t)^{\nu+1}}{\left(\kappa \rho^{\frac{\nu}{\lambda(\nu)}}\right)^{q-1}}\frac{a^{q-1}}{\Sigma(t)^{q\lambda(\nu)}}\left(\frac{\kappa \rho^{\frac{\nu}{\lambda(\nu)}}}{\Sigma(t)^{\nu}}\right)^{q-1}\left(C_2z^{\frac{q}{q-1}}-C_1\mathcal{F}\right)\\
		&=-\nu \eta\mathcal{F}^{\frac{q}{q-1}}+a\lambda(\nu)\eta\mathcal{F}^{\frac{1}{q-1}}z^{\frac{q}{q-1}}+\Sigma(t)^{(2-q)\nu+1-q\lambda(\nu)}a^{q-1}\left(C_1\mathcal{F}-C_2z^{\frac{q}{q-1}}\right)\\
		&=-\nu \eta\mathcal{F}^{\frac{q}{q-1}}+a\lambda(\nu)b\mathcal{F}^{\frac{1}{q-1}}z^{\frac{q}{q-1}}+a^{q-1}\left(C_1\mathcal{F}-C_2z^{\frac{q}{q-1}}\right)\numberthis
		\end{align*}
		where the exponent of $\Sigma(t)$ is zero because of \eqref{eq:numlambda}.
		We introduce two sets
		\begin{align*}
		\mathcal{E}_1:&=\left\{(x,t)\in\R\times(0,\infty)\mid z^{\frac{q}{q-1}}\geq \frac{1}{2}\left(1+\frac{C_1}{C_1+C_2}\right) \right\} \\ \mathcal{E}_2:&=\left\{(x,t)\in\R\times(0,\infty)\mid z^{\frac{q}{q-1}}<\frac{1}{2}\left(1+\frac{C_1}{C_1+C_2}\right) \right\}
		\end{align*}
		and note that as $\lambda$ is decreasing with respect to $q$, we have
		\begin{equation}
		\label{eq:lambdaest}
		\frac{1}{4}=\lambda(q(\nu))\leq\lambda(\nu)\leq\lambda(4-q(\nu))=\frac{6\nu+1}{8\nu}\leq \frac{7}{8} \quad \text{for}\quad q\in[4-q(\nu),q(\nu)].
		\end{equation}

		Inside $\mathcal{E}_1$, the first term can be small depending on the data but the lower bound we chose for $z^{\frac{q}{q-1}}$ ensures that the rest of the terms are negative on their own without dependence on $\nu$.
		Using \eqref{eq:nearcompfunc1} and estimate \eqref{eq:lambdaest}, it follows that in $\mathcal{E}_1$ it holds 
		\begin{align*}
		\label{eq:nearcompfunc2}
		\mathcal{L}(g)&\leq -\nu \eta\mathcal{F}^{\frac{q}{q-1}}+ a\lambda(\nu)\eta\mathcal{F}^{\frac{1}{q-1}}z^{\frac{q}{q-1}}+a^{q-1}\left(C_1(1-z^{\frac{q}{q-1}})_+-C_2z^{\frac{q}{q-1}}\right) \\
		&\leq a\lambda(\nu)\eta\mathcal{F}^{\frac{1}{q-1}}+a^{q-1}\left(C_1-(C_1+C_2)z^{\frac{q}{q-1}}\right)\\
		&\leq \hat{a}\left(\lambda(\nu)\eta+C_1-\frac{1}{2}\left(C_1+C_2\right)\left(1+\frac{C_1}{C_1+C_2}\right)\right)\\
		&= \hat{a}\left(\lambda(\nu)\eta-\frac{q(p-1)}{2(q-1)^2}\right)\\
		&\leq \hat{a}\eta\left(\frac{7}{8}-\frac{q}{2(q-1)}\right)\leq 0 \numberthis
		\end{align*}
		where $\hat{a}=\max\{a,a^{q-1}\}$. The last inequality holds because we assumed that $q<\frac{7}{3}$. Notice that this estimate holds for all $\nu$ but only for $q\in(4-q(\nu),\frac{7}{3})$ depending on the $\nu$ we pick.
		
		In $\mathcal{E}_2$, we have
		\begin{equation*}
		\mathcal{F}\geq\frac{C_2}{2(C_1+C_2)}
		\end{equation*}
		and we can ensure that $\mathcal{L}(g)$ is negative by choosing a suitably large $\nu$.
		We again estimate using \eqref{eq:nearcompfunc1} and \eqref{eq:lambdaest} that inside $\mathcal{E}_2$, it holds
		\begin{align*}
		\label{eq:nearcompfunc3}
		\mathcal{L}(g)&\leq -\nu \eta\mathcal{F}^{\frac{q}{q-1}}+a\lambda(\nu)\eta\mathcal{F}^{\frac{1}{q-1}}z^{\frac{q}{q-1}}+a^{q-1}\left(C_1\mathcal{F}-C_2z^{\frac{q}{q-1}}\right)\\
		\leq&-\nu \eta\left(\frac{C_2}{2(C_1+C_2)}\right)^{\frac{q}{q-1}}+\hat{a}\left(\frac{7}{8}b+C_1\right). \numberthis
		\end{align*}
		Choose
		\begin{equation*}
		\nu:=\max_{q\in[8/5,7/3]}\hat{a}\left(\frac{7}{8}+\frac{C_1}{\eta}\right)\left(\frac{C_2}{2(C_1+C_2)}\right)^{-\frac{q}{q-1}}
		\end{equation*}
		so that for this $\nu$, we have $\mathcal{L}(g)\leq0$ in $\mathcal{E}_2$ by \eqref{eq:nearcompfunc3}. Notice that this choice of $\nu$ depends on $n$ and $p$ but not $q$ and that $4-q(\nu)>\frac{8}{5}$ for all $\nu\geq1$. Thus for this choise of $\nu$, we get that by \eqref{eq:nearcompfunc2} and \eqref{eq:nearcompfunc3}, we have $\mathcal{L}(g)\leq0$ in the classical sense in $\mathcal{S'}$ for all $q\in(4-q(\nu),\frac{7}{3})$.
		
		We still have to check the points where $\nabla\mathcal{G}=0$. The gradient for the original function \eqref{eq:nearcompfunc} is
		\begin{equation*}
		\nabla\mathcal{G}(x,t)=A\frac{q}{q-1}\mathcal{F}^{\frac{1}{q-1}}\mathcal{F}'=-Aa\mathcal{F}^{\frac{1}{q-1}}z^{\frac{1}{q-1}}\frac{x}{\abs{x}\Sigma(t)^{\lambda(\nu)}}=-Aa\mathcal{F}^{\frac{1}{q-1}}\abs{x}^{\frac{1}{q-1}-1}\frac{x}{\Sigma(t)^{q\lambda(\nu)}}
		\end{equation*}
		which exists and vanishes at the origin as $\frac{1}{q-1}>0$ and also vanishes when $\mathcal{F}=0$, that is when $x\in\partial\mathcal{R}$. Using the time derivative we calculated in \eqref{eq:nearcompfunc4}, we have that for $x\in\partial\mathcal{R}$ it holds $\partial_t\mathcal{G}=0$ and for $x=0$, we have
		\begin{equation*}
		\partial_t\mathcal{G}=-\nu\frac{ \left(\kappa \rho^{\frac{\nu}{\lambda(\nu)}}\right)^{q-1}}{\Sigma(t)^{\nu+1}}\leq0.
		\end{equation*}
		Let $\vp\in C^2$ be an admissible test function touching $\Phi$ at a critical point $(x,t)$ from above. For any such function $\partial_t\vp(x,t)=\partial_t\mathcal{G}(x,t)\leq0$ and thus $\Phi$ is a viscosity subsolution in $\mathcal{S}$. In $\left(\Rn\times\R^+\right)\setminus\mathcal{S}$, any admissible test function touching $\mathcal{G}$ from above must have zero time-derivative and thus $\mathcal{G}$ is a viscosity subsolution in the entire $\Rn\times\R^+$.
	\end{proof}
	We need one more comparison function to handle expanding the sidewise positivity set in our proof of the singular forward Harnack's inequality in Theorem \ref{thm:parharnack}. This differs from the degenerate case where only one Barenblatt type comparison function is used \cite[Theorem 7.3]{Parviainen2020}.
	
	Let $k$ and $\nu$ be positive parameters and consider cylindrical domains with annular cross-section
	\begin{equation}
	\label{eq:cylcompset}
	\mathcal{C}(\theta):=\left\{\nu<\abs{x}<1\right\}\times(0,\theta).
	\end{equation}
	For these parameters and a constant $\zeta$, we define
	\begin{equation}
	\label{eq:cylcompfunc}
	\Psi(x,t):=k\left(1-\abs{x}^2\right)_+^{\frac{q}{q-1}}\left(1+k^{\frac{2-q}{q-1}}\zeta\left(\frac{\abs{x}^q}{\eta t}\right)^{\frac{1}{q-1}}\right)^{-\frac{q-1}{2-q}}.
	\end{equation}
	This is a rescaled version of the comparison function introduced by DiBenedetto in \cite[VII 6]{Dibenedetto1993}. Our set \eqref{eq:cylcompset} has different scaling compared to DiBenedetto's as we feel this slightly simplifies the roles of parameters. After finding a suitable $\zeta$ to ensure that $\Psi$ is a subsolution, we can pick $k$ to set what value $\Psi$ attains on the inner lateral boundary and finally pick $\nu$ to set the size of the hole in the annular cross-section of our cylinder to be of suitable radius. In our proof of the forward inequality these are picked in \eqref{eq:pickk} and \eqref{eq:picknu}. We present the proof in detail for the ease of the reader and to fix some typos in the literature.
	\begin{lemma}
		\label{le:cylcomp}
		Let the range condition \eqref{eq:range} hold and $q<2$. There exist constants $\zeta:=\zeta(n,p,q)$ and $\Theta:=\Theta(n,p,q)$ such that for every $0<\nu<1$ and $k>0$, \eqref{eq:cylcompfunc} is a viscosity subsolution to the equation \eqref{eq:rgnppar} in $C(\theta)$ for
		\begin{equation}
		\label{eq:cylcompconst}
		\theta=\nu^qk^{2-q}\Theta.
		\end{equation}
	\end{lemma}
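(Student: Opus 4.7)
The plan is to follow the same template as Lemmas \ref{le:compfunc} and \ref{le:compnear2}. Since $\Psi$ is radial in space, first pass to radial coordinates and write $\psi(r,t):=\Psi(x,t)$ with $r=\abs{x}$, introducing the shorthand $\mathcal{F}:=(1-r^2)_+$, $z:=r^q/(\eta t)$, and
\[
H:=\left(1+k^{\frac{2-q}{q-1}}\zeta z^{\frac{1}{q-1}}\right)^{-\frac{q-1}{2-q}},
\]
so that $\psi=k\mathcal{F}^{q/(q-1)}H$. In the annular domain $\{\nu<r<1\}\times(0,\theta)$ both factors are strictly decreasing in $r$ (using $q>1$, $2-q>0$, and $\zeta,k>0$), hence $\psi_r<0$ and $\nabla\Psi$ never vanishes. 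Consequently there are no critical points to handle separately, and it suffices to verify the classical subsolution inequality
\[
\partial_t\psi-\abs{\psi_r}^{q-2}\!\left((p-1)\psi_{rr}+\frac{n-1}{r}\psi_r\right)\leq 0
\]
pointwise in $\mathcal{C}(\theta)$.

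Next, compute the three derivatives. The time derivative acts only through $H$ and couples to the spatial scale via $z=r^q/(\eta t)$, so dimensionally it matches the $(q-2)$-homogeneous diffusion term. The spatial derivatives split into a principal part coming from $\mathcal{F}^{q/(q-1)}$ (the Barenblatt-type diffusion piece analogous to Lemma \ref{le:compfunc}) and a correction coming from $H$. After expanding and dividing by a positive common prefactor, organize the result into a normalized operator $\mathcal{L}(\psi)$ consisting of three groups: a $\zeta$-dependent contribution from $\partial_t H$ together with the cross diffusion of $H$, a $(n-1)/r$ lower-order term controlled by $(n-1)/\nu$ on the annulus, and a bulk $p$-Laplacian contribution from the $\mathcal{F}$-factor.

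The analysis then splits the cross-section into a region where $\mathcal{F}$ is small, in which the polynomial prefactor suppresses every term except the one with the correct sign, and a complementary region where $\mathcal{F}$ is bounded below, in which the time restriction $t<\theta=\nu^q k^{2-q}\Theta$ forces $k^{\frac{2-q}{q-1}}z^{1/(q-1)}$ to be uniformly bounded below so that the $\zeta$-term carries a definite sign. Fixing first $\zeta=\zeta(n,p,q)$ sufficiently large and then $\Theta=\Theta(n,p,q)$ sufficiently small yields $\mathcal{L}(\psi)\leq 0$ throughout $\mathcal{C}(\theta)$. Since $\nabla\Psi\neq 0$ in $\mathcal{C}(\theta)$, this classical inequality is automatically the viscosity one.

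The main obstacle is algebraic bookkeeping: because $\Psi$ is a product of two factors each carrying its own exponent and parameter dependence, every derivative unfolds into several terms that scale differently in $k$, $\nu$, $r$ and $t$, and one must track these scalings carefully in order to isolate the $\zeta$-contribution and pin down the correct choices of $\zeta$ and $\Theta$. Once this bookkeeping is in place, the two-region case analysis mirrors that of Lemma \ref{le:compfunc} closely.
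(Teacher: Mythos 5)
Your observation that $\nabla\Psi\ne0$ throughout the open annular cylinder $\mathcal{C}(\theta)=\{\nu<|x|<1\}\times(0,\theta)$ is correct and does dispense with the critical-point case for the lemma as stated (the paper actually checks $|x|=1$ as well because the subsolution property is eventually wanted up to that sphere, but that goes beyond what \eqref{eq:cylcompconst} claims). The high-level outline---radialize, compute, normalize, then choose $\zeta$ large and $\Theta$ small---is also the right template. However, there is a genuine gap in the middle.

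The assertion that ``the two-region case analysis mirrors that of Lemma~\ref{le:compfunc} closely'' is misleading, and more importantly the proposal never explains where the range condition~\eqref{eq:range} enters, even though the lemma is false without it. In the paper's argument, after the bookkeeping the operator $\mathcal{R}(\psi)=-(p-1)\psi''-\frac{n-1}{r}\psi'$ is split as a ``main'' term proportional to $\bigl[d-\tfrac{q}{2-q}\tfrac{z}{1+z}\bigr]$ plus a remainder $A$; one then shows $A\le0$ uniformly in $r\in(0,1)$ provided $z$ (the paper's $z$, i.e.\ your $k^{\frac{2-q}{q-1}}\zeta z^{1/(q-1)}$) is bounded below. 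This is the content of the separate Lemma~\ref{le:alemma}, where the constant $K$ must be shown to satisfy $K<1$, and this is exactly where~\eqref{eq:range} is used. Your plan of treating $(n-1)/r$ as a lower-order term that can be ``controlled by $(n-1)/\nu$'' does not address this: after multiplying by $|\psi'|^{q-2}$ the term $-(n-1)\psi'/r$ has the wrong sign and does not vanish faster than the $(p-1)\psi''$ term as $r\to1^-$ (both are $O(1)$ after the $|\psi'|^{q-2}$ weighting, since the two powers of $\mathcal{F}=1-r^2$ cancel), so the competition between them is genuinely delicate and cannot be discarded by a magnitude bound. Equally, the dichotomy on $\mathcal{F}$ small vs.\ large does not, by itself, produce the sign control that Lemma~\ref{le:alemma} supplies; the paper instead enforces a uniform lower bound on $z$ through the choice of $\theta$, which is what lets the single inequality of Lemma~\ref{le:alemma} cover all $r$. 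To make your proposal a proof you would need to carry out the computation, identify the remainder $A$ explicitly, and prove it is nonpositive using~\eqref{eq:range}---that is essentially what the paper does and is where the real work lies.
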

	\begin{proof}
		The function $\Psi$ is radial and thus we will again do our calculations in radial coordinates. Define
		\begin{equation*}
		\psi(r,t):=k\left(1-r^2\right)_+^{\frac{q}{q-1}}\left(1+k^{\frac{2-q}{q-1}}\zeta\left(\frac{r^q}{\eta t}\right)^{\frac{1}{q-1}}\right)^{-\frac{q-1}{2-q}}
		\end{equation*}
		and denote
		\begin{equation*}
		z:=k^{\frac{2-q}{q-1}}\zeta\left(\frac{r^q}{\eta t}\right)^{\frac{1}{q-1}}, \quad \mathcal{F}:=1+z,\quad w:=\frac{k}{\mathcal{F}^{\frac{q-1}{2-q}}},\quad v:=(1-r^2)^{\frac{q}{q-1}},
		\end{equation*}
		so that $\psi=vw$.
		Whenever $\psi'\not=0$, we can simplify our statement to the form
		\begin{equation*}
		\partial_t\psi-\abs{\psi'}^{q-2}\left((p-1)\psi'' +\psi'\frac{n-1}{r}\right)\leq 0 \quad \text{ in } \mathcal{C'}(\theta):=\left\{\nu<r <1\right\}\times(0,\theta).
		\end{equation*}
		We have
		\begin{align*}
		v'&=-\frac{2rq}{q-1}\left(1-r^2\right)^{\frac{1}{q-1}}=-\frac{2rq}{q-1}v^{\frac{1}{q}} \\
		v''&=\frac{4r^2q}{(q-1)^2}\left(1-r^2\right)^{\frac{2-q}{q-1}}-\frac{2q}{q-1}\left(1-r^2\right)^{\frac{1}{q-1}}=\frac{4r^2q}{(q-1)^2}v^{\frac{2-q}{q}}-\frac{2q}{q-1}v^{\frac{1}{q}} \\
		w'&=k\frac{1-q}{2-q}\mathcal{F}^{-\frac{1}{2-q}}\frac{q}{q-1}k^{\frac{2-q}{q-1}}\zeta\left(\frac{r}{\eta t}\right)^{\frac{1}{q-1}}=-\frac{q}{2-q}\frac{w}{\mathcal{F}}\frac{z}{r} \\
		w''&=-\left(\frac{q}{2-q}\right)\left[-\left(\frac{q}{2-q}\right)\frac{w}{\mathcal{F}^2}\frac{z^2}{r^2}+\left(\frac{q}{q-1}\right)\frac{w}{\mathcal{F}}\frac{z}{r^2}-\left(\frac{q}{q-1}\right)\frac{w}{\mathcal{F}}\frac{z^2}{r^2}-\frac{w}{\mathcal{F}}\frac{z}{r^2}\right] \\
		&=\left(\frac{q}{2-q}\right)\left[\left(\frac{q}{(2-q)(q-1)}\right)\frac{w}{\mathcal{F}^2}\frac{z^2}{r^2}-\frac{1}{q-1}\frac{w}{\mathcal{F}}\frac{z}{r^2}\right] \\
		&=\frac{q^2}{(2-q)^2(q-1)}\frac{w}{\mathcal{F}^2}\frac{z^2}{r^2}-\frac{q}{(2-q)(q-1)}\frac{w}{\mathcal{F}}\frac{z}{r^2}.
		\end{align*}
		Define operators $\mathcal{Q}:C^2(\R)\to\R$ and $\mathcal{R}:C^2(\R)\to\R$ by
		\begin{equation*}
		\mathcal{Q}(\psi):=\partial_{t}\psi-\abs{\psi'}^{q-2}\left((p-1)\psi'' +\psi'\frac{n-1}{r}\right)
		\end{equation*}
		and
		\begin{equation*}
		\mathcal{R}(\psi):=-(p-1)\psi''-\frac{n-1}{r}\psi'
		\end{equation*}
		so that $\mathcal{Q}(\psi)=\partial_t\psi+\abs{\psi'}^{q-2}\mathcal{R}(\psi)$. Using $\psi'=w'v+wv'$ and $\psi''=w''v+2w'v'+wv''$, we can estimate $\mathcal{R}(\psi)$ to obtain
		
		\begin{align*}
		\label{eq:cylcompfunc5}
		\mathcal{R}(\psi)= & -(p-1)(w^{\prime\prime}v+2w^{\prime}v^{\prime}+wv^{\prime\prime})-\frac{n-1}{r}(w^{\prime}v+wv^{\prime})\\
		&=-(p-1)\Bigg[\left(\frac{q^{2}}{(2-q)^{2}(q-1)}\frac{w}{\mathcal{F}^{2}}\frac{z^{2}}{r^{2}}-\frac{q}{(2-q)(q-1)}\frac{w}{\mathcal{F}}\frac{z}{r^{2}}\right)v+\frac{4q^{2}}{(2-q)(q-1)}\frac{w}{\mathcal{F}}\frac{z}{r}v^{\frac{1}{q}}\\ &\quad\quad\quad\quad\quad+w\left(\frac{4r^{2}q}{(q-1)^{2}}v^{\frac{2-q}{q}}-\frac{2q}{q-1}v^{\frac{1}{q}}\right)\Bigg] -\frac{n-1}{r}\left(-\frac{q}{2-q}\frac{w}{\mathcal{F}}\frac{z}{r}v-\frac{2rq}{q-1}v^{\frac{1}{q}}w\right)\\
		= & -\left[\frac{(p-1)q^{2}}{(2-q)^{2}(q-1)}\frac{z}{\mathcal{F}}-\frac{(p-1)q}{(2-q)(q-1)}-\frac{(n-1)q}{2-q}\right]\frac{w}{\mathcal{F}}\frac{z}{r^{2}}v\\
		& -\frac{4q^{2}(p-1)}{(2-q)(q-1)}\frac{z}{\mathcal{F}r}wv^{\frac{1}{q}}-\frac{4r^{2}q(p-1)}{(q-1)^{2}}wv^{\frac{2-q}{q}}+\frac{2q(p-1)}{q-1}wv^{\frac{1}{q}}+\frac{2(n-1)q}{q-1}wv^{\frac{1}{q}}\\
		=: & \frac{q}{2-q}\eta\left[d-\frac{q}{(2-q)}\frac{z}{\mathcal{F}}\right]\frac{wz}{\mathcal{F}r^{2}}v+A.\numberthis
		\end{align*}
		Here $d=\frac{(n-1)(q-1)}{(p-1)}+1$ and $A$ consists of four latter terms.
		Next, we will prove that $A$ is negative for suitably large $z$ and we prove the technical part of this as a separate lemma after finishing this proof.
		Let $Z(p,q,N)$ be the positive constant given by the Lemma \ref{le:alemma} proven below and note that to use this lemma, we will need to restrict $\theta$ to make sure that
		\begin{equation*}
		z\geq Z \text{ for all }(r,t)\in C(\theta).
		\end{equation*}
		The correct choice turns out to be 
		\begin{equation}
		\label{eq:cylcompfunc4}
		\theta\leq\frac{\zeta^{q-1}\nu^qk^{2-q}}{\eta}\frac{1}{Z^{q-1}}
		\end{equation}
		as plugging this into the definition of $z$, we get
		\begin{equation*}
		z=k^{\frac{2-q}{q-1}}\zeta\left(\frac{r^q}{\eta t}\right)^{\frac{1}{q-1}}\geq k^{\frac{2-q}{q-1}}\zeta\left(\frac{\nu^q}{\eta t}\right)^{\frac{1}{q-1}}\geq k^{\frac{2-q}{q-1}}\zeta\left(\frac{\nu^q}{\zeta^{q-1}\nu^qk^{2-q}\frac{1}{Z^{q-1}}}\right)^{\frac{1}{q-1}}=Z.
		\end{equation*}
		Thus by Lemma \ref{le:alemma}
		\begin{align*}
		A& =  -wv^{\frac{1}{q}}\frac{4q^{2}(p-1)}{(2-q)(q-1)}\frac{z}{\mathcal{F}r}-wv^{\frac{2-q}{q}}\frac{4r^{2}q(p-1)}{(q-1)^{2}}+wv^{\frac{1}{q}}\frac{2q(p-1)}{q-1}+wv^{\frac{1}{q}}\frac{2(n-1)q}{q-1}\\
		&=  \frac{2q}{q-1}wv^{\frac{1}{q}}\left(-2\frac{q(p-1)}{2-q}\frac{z}{\mathcal{F}r}-2\frac{p-1}{q-1}v^{\frac{1-q}{q}}r^{2}+p+n-2\right)\\
		&=  \frac{2q}{q-1}wv^{\frac{1}{q}}\left(-2\frac{q(p-1)}{2-q}\frac{z}{(1+z)}\frac{1}{r}-2\frac{p-1}{q-1}\frac{r^{2}}{1-r^2}+p+n-2\right)\\
		&\leq 0
		\end{align*}
		and thus combining this with \eqref{eq:cylcompfunc5}, we get
		\begin{equation}
		\label{eq:cylcompfunc69}
		\mathcal{R}(\psi)\leq\frac{q}{2-q}\eta v\frac{w}{\mathcal{F}}\frac{z}{r^2}\left[d-\frac{q}{2-q}\frac{z}{\mathcal{F}}\right].
		\end{equation}
		Next, we estimate
		\begin{align*}
		\abs{\psi'}&=-\psi'=-w'v-wv'=\frac{q}{2-q}\frac{w}{\mathcal{F}}\frac{z}{r}v+w\frac{2rq}{q-1}v^{\frac{1}{q}}\\
		&=\frac{w}{r}\left(\frac{q}{2-q}\frac{z}{\mathcal{F}}(1-r^2)^{\frac{q}{q-1}}+\frac{2q}{q-1}r^2(1-r^2)^{\frac{1}{q-1}}\right)\\
		&\leq\frac{w}{r}\left(\frac{q}{2-q}+\frac{2q}{q-1}\right)=:C_1\frac{w}{r}
		\end{align*}
		and by direct calculation
		\begin{equation*}
		\partial_t\psi=v\left(\frac{1-q}{2-q}\frac{w}{\mathcal{F}}\left(-\frac{1}{q-1}\frac{z}{t}\right)\right)=\frac{1}{2-q}v\frac{w}{\mathcal{F}}\frac{z}{t}
		\end{equation*}
		and thus we get
		\begin{align}
		\label{eq:cylcompfunc70}
		\abs{\psi'}^{2-q}\partial_{t}\psi\leq\frac{C_1^{2-q}}{2-q}\left(\frac{w}{r}\right)^{2-q}v\frac{w}{\mathcal{F}}\frac{z}{t}.
		\end{align}
		Set
		\begin{equation*}
		\mathcal{L}(\psi)=\frac{(2-q)\mathcal{F}r^2}{vwz}\abs{\psi'}^{2-q}\mathcal{Q}(\psi)=\frac{(2-q)\mathcal{F}r^2}{vwz}\left(\abs{\psi'}^{2-q}\partial_t\psi+\mathcal{R}(\psi)\right)
		\end{equation*}
		and plug in our estimates \eqref{eq:cylcompfunc69} and \eqref{eq:cylcompfunc70} to get
		\begin{align*}
		\label{eq:cylcompfunc3}
		\mathcal{L}(\psi)&\leq\frac{(2-q)\mathcal{F}r^2}{vwz}\left(\frac{C_1^{2-q}}{2-q}\left(\frac{w}{r}\right)^{2-q}v\frac{w}{\mathcal{F}}\frac{z}{t}+\frac{q}{2-q}\eta v\frac{w}{\mathcal{F}}\frac{z}{r^2}\left[d-\frac{q}{2-q}\frac{z}{\mathcal{F}}\right]\right)\\
		&=C_1^{2-q}w^{2-q}\frac{r^q}{t}+\eta q\left[d-\frac{q}{2-q}\frac{z}{\mathcal{F}}\right].\numberthis
		\end{align*}
		By our definition of $w$ and $z$
		\begin{equation*}
		w^{2-q}\frac{r^q}{t}=\left(\frac{z}{1+z}\right)^{q-1}\frac{k^{2-q}}{\left(k^{\frac{2-q}{q-1}}\zeta\left(\frac{r^q}{\eta t}\right)^{\frac{1}{q-1}}\right)^{q-1}}\frac{r^q}{t}\leq\frac{\eta}{\zeta^{q-1}}
		\end{equation*}
		and
		\begin{align*}
		\eta q\left[d-\frac{q}{2-q}\frac{z}{\mathcal{F}}\right]&=\eta \frac{q}{2-q}\left[d(2-q)-\frac{qz}{1+z}\right]=\eta \frac{q}{2-q}\left[d(2-q)-q+\frac{q}{1+z}\right]\\&=:\eta \frac{q}{2-q}\left[-\lambda+\frac{q}{\mathcal{F}}\right].
		\end{align*}
		Using these we can further estimate \eqref{eq:cylcompfunc3} to get
		\begin{equation}
		\label{eq:cylcompfunc6}
		\mathcal{L}(\psi)\leq\frac{C_1^{2-q}\eta}{\zeta^{q-1}}+\eta \frac{q}{2-q}\left[-\lambda+\frac{q}{\mathcal{F}}\right].
		\end{equation}
		Now finally if we assume
		\begin{equation}
		\label{eq:cylcompfunc7}
		z>\frac{2q}{\lambda}
		\end{equation} we have $\frac{q}{\mathcal{F}}=\frac{q}{1+z}<\frac{\lambda}{2}$ and can choose the $\zeta$ that satisfies
		\begin{equation*}
		\frac{C_1^{2-q}\eta}{\zeta^{q-1}}-\eta\frac{q}{2-q}\frac{\lambda}{2}\leq0.
		\end{equation*}
		For this $\zeta$, the estimate \eqref{eq:cylcompfunc6} becomes $\mathcal{L}(\psi)\leq 0$ and
		we have that $\psi$ is a classical subsolution. To ensure that only $z$ satisfying both $z\geq Z$ and \eqref{eq:cylcompfunc7} are in our annulus $\mathcal{C'}(\theta)$, we need to further restrict $\theta$ we picked in \eqref{eq:cylcompfunc4} to make sure that
		\begin{equation*}
		t<k^{2-q}\zeta^{q-1}r^q\left(\frac{\lambda}{2q}\right)^{q-1}\eta^{-1}
		\end{equation*}
		in the set.
		By the definition of $\mathcal{C'}(\theta)$, we have $r\geq\nu$ so it suffices to choose
		\begin{equation*}
		\theta\leq\left(\frac{\lambda}{2q}\right)^{q-1}\frac{\zeta^{q-1}\nu^qk^{2-q}}{\eta}
		\end{equation*}
		so picking
		\begin{equation*}
		\theta:=\frac{\zeta^{q-1}}{\eta}\min\left\{\left(\frac{\lambda}{2q}\right)^{q-1},\frac{1}{Z^{q-1}}\right\}\nu^qk^{2-q}=:\Theta(n,p,q)\nu^qk^{2-q}
		\end{equation*}
		all estimates hold and $\psi$ is a classical subsolution in $\mathcal{C'}(\theta)$.
		We still have to check the points where $\nabla\Psi=0$. By direct calculation, denoting
		\begin{equation*}
		v=(1-\abs{x}^2)^{\frac{q}{q-1}} \quad \text{and}\quad w=k\left(1+k^{\frac{2-q}{q-1}}\zeta\left(\frac{\abs{x}^q}{\eta t}\right)^{\frac{1}{q-1}}\right)^{-\frac{q-1}{2-q}},
		\end{equation*}
		we have
		\begin{equation*}
		\nabla\Psi(x)=\frac{2q}{q-1}v^{\frac{1}{q}}xw+v\left(-\frac{q}{2-q}\right)k^{\frac{1}{q-1}}\left(1+k^{\frac{2-q}{q-1}}\zeta\left(\frac{\abs{x}^{\frac{q}{q-1}}}{\left(\eta t\right)^{\frac{1}{q-1}}}\right)\right)^{-\frac{1}{2-q}}\zeta\left(\frac{\abs{x}^{\frac{2-q}{q-1}}}{\left(\eta t\right)^{\frac{1}{q-1}}}\right)x
		\end{equation*}
		and it is easy to see that $\nabla\Psi(x)=0$ if and only $\abs{x}=1$ or $x=0$. The origin is outside our domain so let $(y,s)$ be an arbitrary point such that $\abs{y}=1$ and $s\in(0,\theta)$ and let $\vp\in C^2$ be an admissible test function touching $\Psi$ from above at $(y,s)$. At such point
		\begin{equation*}
		\partial_{t}\vp(y,s)=\partial_{t}\Psi(y,s)=\frac{1}{2-q}(1-\abs{y}^2)^{\frac{q}{q-1}}\frac{w}{\mathcal{F}}\frac{z}{t}=0
		\end{equation*}
		and same trivially holds when touching a point in $\left(\Rn\setminus B_1(0)\right)\times(0,\theta)$ and thus $\Psi$ is a viscosity subsolution in $\left(\Rn\setminus B_\nu(0)\right)\times(0,\theta)$. This finishes the proof of Lemma \ref{le:cylcomp}.
	\end{proof}
	Next, we will prove Lemma \ref{le:alemma} that we used in the above proof to show that $A$ was negative.
	\begin{lemma}
		\label{le:alemma}
		There exists a constant $Z=Z(p,q,n)$ such that for all $z\geq Z$ and all $r\in(0,1)$ we have
		\begin{equation*}
		E(r):=2\frac{q(p-1)}{2-q}\frac{z}{(1+z)}\frac{1}{r}+2\frac{p-1}{q-1}\frac{r^{2}}{1-r^2}-p-n+2\geq 0.
		\end{equation*}
	\end{lemma}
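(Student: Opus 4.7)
The plan is to decompose the interval $(0,1)$ into two subregions based on which of the two non-negative terms of $E$ can absorb the constant $p+n-2$. Let $r_{*}\in(0,1)$ denote the unique point at which $\frac{2(p-1)r_{*}^{2}}{(q-1)(1-r_{*}^{2})}=p+n-2$, that is,
\begin{equation*}
r_{*}^{2}=\frac{(q-1)(p+n-2)}{2(p-1)+(q-1)(p+n-2)}.
\end{equation*}
For $r\in[r_{*},1)$ the second term of $E$ alone is already at least $p+n-2$, and since the first term is non-negative, we have $E(r)\geq 0$ for every $z>0$ without any constraint.

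For $r\in(0,r_{*}]$ I would drop the (non-negative) second term and force the first term alone to dominate $p+n-2$. This amounts to requiring $\frac{z}{1+z}\geq\frac{r(2-q)(p+n-2)}{2q(p-1)}$, whose right-hand side is maximized at $r=r_{*}$. Therefore, it suffices to take $Z$ so large that $\frac{Z}{1+Z}\geq\frac{r_{*}(2-q)(p+n-2)}{2q(p-1)}$; such a $Z$ exists precisely when the constant $\frac{r_{*}(2-q)(p+n-2)}{2q(p-1)}$ is strictly less than $1$, and any $Z$ larger than $\frac{r_{*}(2-q)(p+n-2)}{2q(p-1)-r_{*}(2-q)(p+n-2)}$ then works.

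The only nontrivial point, and the main obstacle, is the verification of this last strict inequality, which after squaring is equivalent to
\begin{equation*}
(q-1)(2-q)^{2}(p+n-2)^{3}<4q^{2}(p-1)^{2}\bigl[2(p-1)+(q-1)(p+n-2)\bigr],
\end{equation*}
and which implicitly uses the range condition \eqref{eq:range} already imposed in Lemma \ref{le:cylcomp} where this lemma is applied. In the case $p\geq(n+1)/2$, one has $p+n-2\leq 3(p-1)$, and the claim reduces by an elementary comparison, together with the bound $\max_{q\in(1,2)}(q-1)(2-q)^{2}=4/27$, to $4<8q^{2}$, which holds for all $q>1$. In the complementary case $p<(n+1)/2$, where the condition $q>\frac{2(n-p)}{n-1}$ becomes active, one checks the inequality by direct substitution at the critical value $q=\frac{2(n-p)}{n-1}$ and extends it to the entire admissible range $q\in\bigl(\frac{2(n-p)}{n-1},2\bigr)$ by a short algebraic comparison. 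Once this is secured, the choice of $Z$ given above completes the proof.
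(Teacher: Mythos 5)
Your overall strategy — split $r\in(0,1)$ into two subintervals, let one of the two non‑negative terms of $E$ absorb the constant $p+n-2$ on each subinterval, and reduce the matter to showing a single constant is strictly less than~$1$ — is structurally the same as the paper's. The main difference is in how the split point is chosen and what this buys. The paper takes the fixed split at $r=2/3$ and works with the two \emph{linear} (in $\frac{n-1}{p-1}$) conditions
\[
K_1=\frac{n-1}{p-1}\frac{2-q}{2q}<1,\qquad K_2=\Bigl(1+\frac{n-1}{p-1}\Bigr)\frac{2-q}{3q}<1,
\]
each of which is verified by a two‑line computation directly from \eqref{eq:range}. You instead choose the ``optimal'' break point $r_*$ at which the second term exactly equals $p+n-2$, which produces the single threshold $\frac{r_*(2-q)(p+n-2)}{2q(p-1)}$, whose verification leads (after squaring) to the cubic inequality
\[
(q-1)(2-q)^{2}(p+n-2)^{3}<4q^{2}(p-1)^{2}\bigl[2(p-1)+(q-1)(p+n-2)\bigr].
\]
This is where the proposal has a genuine gap. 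In the case $p\geq(n+1)/2$ your reduction to $4<8q^2$ is correct and complete. But in the case $p<(n+1)/2$, the assertion that one ``checks the inequality by direct substitution at the critical value $q=\frac{2(n-p)}{n-1}$ and extends it to the entire admissible range by a short algebraic comparison'' is unjustified, and in fact a naive monotonicity argument fails: the factor $(q-1)(2-q)^2$ on the left is \emph{not} monotone on $(1,2)$ — it attains its maximum $4/27$ at $q=4/3$. Whenever the critical value $q_0:=\frac{2(n-p)}{n-1}$ lies below $4/3$ (which can happen, e.g.\ $p=3$, $n=6$ gives $q_0=6/5$), the left‑hand side actually \emph{increases} as $q$ moves from $q_0$ toward $4/3$, so checking only at $q=q_0$ does not propagate. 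One would additionally have to compare the left‑hand side at its interior maximum $q=4/3$ against the right‑hand side at its minimum $q=q_0$ (or supply some other argument); this is a real extra step that the proposal neither carries out nor acknowledges. The paper's choice of a \emph{fixed} split point $r=2/3$ is precisely what avoids this: it keeps both the $r$‑dependence and the $q$‑dependence from interacting through a cubic, and the resulting conditions $K_1<1$, $K_2<1$ are monotone and verified directly. So the decomposition idea is sound and the target inequality appears to be true, but the crucial verification in the regime $p<(n+1)/2$ is not actually a proof as written.
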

	\begin{proof}
		Let $K:=\max\{K_1,K_2\}$ where
		\begin{equation*}
		K_1:=\frac{n-1}{p-1}\frac{2-q}{2q}, \quad K_2:=\left(1-\frac{n-1}{p-1}\right)\frac{2-q}{3q}.
		\end{equation*}
		We begin by showing that $K<1$ using the range condition \eqref{eq:range}. 
		
		We first consider the case where $p<\frac{n+1}{2}$ and $q>\frac{2(n-p)}{n-1}$. Since the latter inequality implies
		\begin{equation*}
		\frac{q}{2-q}>\frac{\frac{2(n-p)}{n-1}}{2-2\frac{(n-p)}{n-1}}=\frac{n-p}{p-1},
		\end{equation*}
		we obtain
		\begin{equation*}
		K_1=\frac{n-1}{p-1}\frac{2-q}{2q}<\frac{1}{2}\frac{n-1}{n-p}<\frac{1}{2}\frac{n-1}{n-\frac{n+1}{2}}=\frac{n-1}{2n-n-1}=1
		\end{equation*}
		using the upper bound on $p$. Similarly, we estimate
		\begin{equation*}
		K_2\leq\frac{1}{3}\frac{p-1}{n-p}\left(1+\frac{n-1}{p-1}\right)=\frac{1}{3}\left(\frac{p+n-2}{n-p}\right)<\frac{1}{3}\frac{\frac{n+1}{2}+n-2}{n-\frac{n+1}{2}}=1.
		\end{equation*}
		In the case $p>\frac{n+1}{2}$, we have directly
		\begin{equation*}
		K_1=\frac{n-1}{p-1}\frac{2-q}{2q}\leq\frac{n-1}{\frac{n+1}{2}-1}\frac{2-q}{2q}=2\frac{n-1}{n-1}\frac{2-q}{2q}=\frac{2-q}{q}<1
		\end{equation*}
		and
		\begin{equation*}
		K_2=\frac{1}{3}\frac{2-q}{q}\left(1+\frac{n-1}{p-1}\right)\leq\frac{1}{3}\frac{2-q}{q}\left(1+\frac{n-1}{\frac{n+1}{2}-1}\right)=\frac{2-q}{q}<1
		\end{equation*}
		so hence we have $K<1$ for all exponents satisfying \eqref{eq:range}. Now observe that this implies
		\begin{equation*}
		\frac{z}{z+1}\geq K
		\end{equation*}
		if and only if
		\begin{equation*}
		z\geq\frac{K}{1-K}.
		\end{equation*}
		Denote $Z=\frac{K}{1-K}$ so that by above we have
		\begin{equation}
		\label{eq:alemma1}
		\frac{z}{z+1}\geq K \quad \text{ for all } z\geq Z.
		\end{equation}
		Now, we estimate $E(r)$ separately in the cases $r\geq\frac{2}{3}$ and $r<\frac{2}{3}$. If we first assume $r\geq\frac{2}{3}$, this implies $\frac{r^2}{1-r^2}\geq\frac{4}{5}=:a$ so using \eqref{eq:alemma1}, we can estimate
		\begin{align*}
		E(r)&\geq2\frac{q(p-1)}{2-q}K_1\frac{1}{r}+2\frac{p-1}{q-1}a-p-n+2\\
		&=(p-1)\left(\frac{2q}{2-q}K_1+\frac{2a}{q-1}-1-\frac{n-1}{p-1}\right)\\
		&=(p-1)\left(\frac{2q}{2-q}K_1+\frac{2a+1-q}{q-1}-\frac{n-1}{p-1}\right)\\
		&\geq(p-1)\left(\frac{2q}{2-q}K_1-\frac{n-1}{p-1}\right)\\
		&=0,
		\end{align*}
		where the last identity follows from the definition of $K_1$. If $r\leq\frac{2}{3}$, we discard the second term with $r$ and estimate again using \eqref{eq:alemma1} to get
		\begin{align*}
		E(r)&\leq 2\frac{q(p-1)}{2-q}\frac{3}{2}K_2-p-n+2\\
		&=(p-1)\left(\frac{1}{3}\frac{q}{2-q}K_2-1-\frac{n-1}{p-1}\right)\\
		&=0,
		\end{align*}
		where we used the definition of $K_2$.
	\end{proof}
	\section{Forward intrinsic Harnack's inequality}
	\label{sec:forward}
	In their paper \cite{Parviainen2020}, Parviainen and Vázquez prove the forward Harnack's inequality for viscosity solutions of \eqref{eq:rgnppar} in the degenerate case $q>2$. In this section, we prove the remaining singular case $q<2$ and the case of $q$ near 2. For the proof of the same results for the standard singular $p$-parabolic equation see \cite[VII.9]{Dibenedetto1993}.
	
	In the proof, we first rescale the equation into a simpler form, locate the local supremum of the function in a specific cylinder, and use oscillation estimates to show that there exists a small ball on a time slice where the function is strictly larger than a value depending on the singularity of the equation \eqref{eq:rgnppar}. A key difference compared to the degenerate case is that the cylinders we use have linear time scaling, but their height depends on a constant $\sigma$. We need our initial estimates to be independent of this constant so that we can later use it to ensure our comparison function is a subsolution for a sufficiently long time interval. Barenblatt-type solutions have infinite speed of propagation for $q < 2$ and hence do not work as comparison functions, unlike in the degenerate case.
	
	In the strictly singular case, we next choose $\sigma$ and use the comparison function constructed in Lemma \ref{le:compfunc} to expand the set of positivity in the time direction, thereby obtaining a similar lower bound extended from one time slice to a space-time cylinder. Finally, we use a second comparison function, constructed in Lemma \ref{le:cylcomp}, to widen the set of positivity in the spatial direction, filling the entire ball we are interested in and obtaining the final estimate.
	
	At the end of this section, we prove the inequality for values of $q$ near 2. This case is similar to the degenerate case and requires only one comparison function; however, we use the one constructed in Lemma \ref{le:compnear2} instead of the Barenblatt solution used in the degenerate case. This method yields stable constants as $q \to 2$ from either side. Our main result is the following.
	
%	
%	To get to a suitable setting to use our oscillation estimate, we need a way to estimate the supremum in the past time-slice. Our proof uses a supersolution with infinite boundary values which we constructed in \cite[Lemma 4.1]{Kurkinen2022}.
	\begin{theorem}
		\label{thm:parharnack}
		Let $u \geq 0$ be a viscosity solution to \eqref{eq:rgnppar} in $Q_{1}^{-}(1)$ and let the range condition \eqref{eq:range} hold. Fix $\left(x_{0}, t_{0}\right) \in Q_{1}^{-}(1)$ such that $u(x_0,t_0)>0$. Then there exist $\mu=\mu(n, p, q)$ and ${c}={c}(n, p, q)$ such that
		\begin{equation*}
		u\left(x_{0}, t_{0}\right) \leq \mu \inf _{B_{r}\left(x_{0}\right)} u\left(\cdot, t_{0}+\theta r^q\right)
		\end{equation*}
		where
		\begin{equation*}
		\theta={c}u\left(x_{0}, t_{0}\right)^{2-q},
		\end{equation*}
		whenever $(x_0,t_0)+Q_{4r}(\theta) \subset Q_{1}^{-}(1)$.
	\end{theorem}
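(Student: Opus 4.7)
By homogeneity and translation, after a rescaling
$v(x,t) := u(x_0 + \rho x, t_0 + \rho^q \tau\, t)/u(x_0,t_0)$ with suitable $\rho$ and $\tau$, I would reduce to the normalized situation where $v(0,0)=1$, $v\ge 0$ is a viscosity solution of \eqref{eq:rgnppar} in a large reference cylinder, and the target inequality becomes $\inf_{B_1} v(\cdot, c) \ge \mu^{-1}$ for some absolute $c=c(n,p,q)>0$ and $\mu=\mu(n,p,q)$. Because $u$ solves a quasilinear equation which is homogeneous of order $q-1$ in time after this rescaling, the factor $\theta = c\,u(x_0,t_0)^{2-q}$ is exactly the intrinsic time-scale that makes the rescaled equation the same equation. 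The condition $(x_0,t_0)+Q_{4r}(\theta)\subset Q_1^-(1)$ guarantees enough room to perform all the barrier comparisons.

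\textbf{Step 1: initial pointwise lower bound on a small ball.} In the rescaled picture, I would first produce a point $(\hat x,\hat s)$ near $(0,0)$ and a radius $\hat\rho$ such that $v \ge \tfrac12$ on $B_{\hat\rho}(\hat x)\times\{\hat s\}$. Following the DiBenedetto philosophy, fix a small $\sigma>0$ and look at the largest value of $v$ on a cylinder of the form $B_{3/4}\times[-\sigma,0]$; if $v(0,0)=1$ but $v$ is smaller than $\tfrac12$ on every small ball at every nearby time, then Corollary \ref{cor:oscfix} applied at a supremum point forces an oscillation inconsistent with $v(0,0)=1$. Quantifying this gives constants $\hat\rho, \hat s$ depending only on $n,p,q$ and independent of the height parameter $\sigma$ which is crucial since $\sigma$ will be chosen later to accommodate the comparison functions. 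This is the step that replaces the integral Harnack/weak Harnack estimate used by DiBenedetto in the divergence form theory.

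\textbf{Step 2: expand positivity — singular case $q<2$ away from $2$.} With the small positivity ball in hand, place the comparison function $\Phi$ of Lemma \ref{le:compfunc} centered at $(\hat x,\hat s)$, with parameters $\kappa\sim \tfrac12$ and $\rho\sim\hat\rho$, so that at $t=\hat s$ we have $\Phi(\cdot,\hat s)\le v(\cdot,\hat s)$ on $\overline{B_{\hat\rho}(\hat x)}$ while $\Phi\equiv 0$ on the lateral boundary of its support. The comparison principle (Theorem \ref{thm:comp}) then forces $v\ge \Phi$ up to time $\hat s+\kappa^{2-q}\hat\rho^{\,q}/(\eta\xi)$, which yields a positive lower bound on $B_{\hat\rho/2}(\hat x)$ at a later time of order $1$. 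Next I compose with $\Psi$ of Lemma \ref{le:cylcomp} on an annular cylinder: choose $k$ so that $\Psi$ on the inner boundary $\{|x|=\nu\}$ matches the lower bound just obtained, choose $\nu$ small enough that this inner boundary lies inside the set of positivity, and take $\sigma$ so that $\theta = \nu^q k^{2-q}\Theta$ is at least the target time $c$. The comparison function vanishes on the outer boundary $\{|x|=1\}$ and on $\{t=0\}$ outside $B_\nu$, so the comparison principle delivers $v\ge \Psi\ge \mu^{-1}$ on $B_1\times\{c\}$.

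\textbf{Step 3: case $q$ near $2$.} Here $\Phi$ is inadequate because its time horizon collapses as $q\uparrow 2$. Instead I would apply Lemma \ref{le:compnear2} directly: center $\mathcal G$ at $(\hat x, \hat s)$ with $\rho$ and $\kappa$ chosen so that $\mathcal G(\cdot,0) \le v(\cdot,\hat s)$ on the small ball from Step 1 and $\mathcal G$ is compactly supported, then use the comparison principle to propagate the bound to time $t = c$, noting that $\mathcal G$ is a subsolution on all of $\mathbb R^n\times\mathbb R^+$ so there is no lateral boundary condition to check. Since the parameters in $\mathcal G$ depend continuously on $q$ through $\nu(n,p)$, the resulting constants $\mu,c$ remain stable as $q\to 2$. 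Combined with the degenerate case proven in \cite{Parviainen2020}, this covers the full range \eqref{eq:range}.

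\textbf{Main obstacle.} The delicate point is the \emph{order of quantifiers} in Step 1: the constants $\hat\rho$ and $\hat s$ must be produced without reference to the height $\sigma$ of the reference cylinder, because $\sigma$ is determined only in Steps 2--3 once one knows how much time is needed for the comparison function to sweep out $B_1$. Getting Corollary \ref{cor:oscfix} to give this in a $\sigma$-free way (by using its independence of the height parameter $\varepsilon$ up to the normalization $\omega>1$) is the crucial technical input, and is precisely the correction that the authors mention in the acknowledgement about replacing the earlier oscillation estimate. Once this $\sigma$-independent initial bound is secured, Steps 2 and 3 are standard comparison arguments with the barriers constructed in Section \ref{sec:comp}.
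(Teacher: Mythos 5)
Your plan follows the paper's proof closely: rescaling to $v$ with $v(0,0)=1$, a DiBenedetto-style supremum alternative to locate a controlled positivity ball, Corollary~\ref{cor:oscfix} to obtain an initial lower bound independent of the cylinder-height parameter $\sigma$, then comparison with $\Phi$ of Lemma~\ref{le:compfunc} followed by the annular barrier $\Psi$ of Lemma~\ref{le:cylcomp} for $q<2$ away from $2$, and with $\mathcal{G}$ of Lemma~\ref{le:compnear2} for $q$ near $2$.

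The one genuine imprecision is in your Step~1. You describe producing a radius $\hat\rho$ and a lower bound ``$\tfrac12$'' depending only on $n,p,q$, but neither quantity can be made universal on its own: the supremum point $(\hat x,\hat t)$ may lie arbitrarily close to the lateral boundary, so the available radius $R=(1-\tau_0)/2$ can be arbitrarily small. The role of the choice $N_\tau=(1-\tau)^{-q/(2-q)}$ in the supremum alternative is precisely to make the lower bound $\kappa=\tfrac12(1-\tau_0)^{-q/(2-q)}$ grow at the rate that compensates the shrinkage of $\rho=\delta R$, so that the \emph{intrinsic} combination $\kappa^{2-q}\rho^q=\delta^q/4$ is an $(n,p,q)$-constant. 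It is this combination, not $\kappa$ and $\rho$ separately, that produces a universal $\sigma=\kappa^{2-q}\rho^q/(3\eta\xi)$ in the time expansion and universal parameters $k,\nu$ for $\Psi$ in the sidewise expansion. If one instead takes $\kappa\sim\tfrac12$ and $\hat\rho$ as fixed numbers as your Step~1 reads, the waiting time $\sigma$ degenerates and the barriers do not reach $B_1$. Once this scaling is built in (and the obvious lateral boundary comparison $\mathcal{G}\le v$, using $v\ge 0$, is noted for the near-$2$ case), the rest of your plan matches the paper's route.
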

	\begin{remark}
		The constants $\mu$ and $c$ can be picked to be stable as $q\to2$ from either side as we show in the proof. As $q$ approaches the lower bound in \eqref{eq:range}, $\mu$ tends to infinity, and $c$ tends to zero.
		As $q\to\infty$, both $\mu$ and $c$ tend to infinity.
	\end{remark}
	\begin{proof}[Proof of Theorem \ref{thm:parharnack}]
		The proof for the degenerate case $q>2$ is given as \cite[Theorem 7.3]{Parviainen2020} and thus we can focus on the singular case $q<2$. 
		Consider the rescaled equation
		\begin{equation}
		v(x,t)=\frac{1}{u(x_0,t_0)}u(x_0+rx,t_0+u(x_0,t_0)^{2-q}tr^q)
		\end{equation}
		which solves
		\begin{equation*}
		\begin{cases}
		\partial_t v=\abs{\nabla v}^{q-p}\div\left(\abs{\nabla v}^{p-2}\nabla v\right) \text{ in } Q \\
		v(0,0)=1,
		\end{cases}
		\end{equation*}
		where $Q=B_4(0)\times(-4^q,4^q)$. Now it is enough to show that there exists positive constants $c_0$ and $\mu_0$ such that
		\begin{equation}
		\label{eq:parharnack1}
		\inf_{B_{1}(0)}v(\cdot,c_0)\geq\mu_0
		\end{equation}
		because then by the definition of $v$, we have
		\begin{align*}
		\mu_0u(x_0,t_0)&\leq u(x_0,t_0) \inf_{x\in B_{1}(0)}\frac{1}{u(x_0,t_0)}u(x_0+rx,t_0+c_0u(x_0,t_0)^{2-q}r^q)\\&=\inf_{B_{r}(x_0)}u(\cdot,t_0+c_0u(x_0,t_0)^{2-q}r^q).
		\end{align*}
		For the first part of the proof, we make the extra assumption $q<2$ and deal with values near $q=2$ afterward. Proof for $q=2$ is easy but we need to deal with values near it separately to ensure that we get stable constants as $q\to2$ from either side. We will prove \eqref{eq:parharnack1} in the following steps.

		\textbf{Step 1: Locating the supremum.} First, we will need to locate the supremum of $v$ in $Q$ before time $t=0$ and establish a positive lower bound for $v$ in some small ball around the supremum point. We do this by using the oscillation result we proved earlier. Let $ \mathcal{Q}_0=\{(0,0)\}$ and for all $\tau\in(0,1)$ and $\sigma\in(0,1)$ to be chosen later define nested expanding cylinders 
		\begin{equation*}
		\mathcal{Q}_\tau:=\left\{(x,t)\in Q\mid \abs{x}<\tau, t\in(-\sigma\tau,0]\right\}
		\end{equation*}
		and the numbers
		\begin{equation*}
		M_\tau:=\sup_{\mathcal{Q}_\tau}v, \qquad N_\tau:=(1-\tau)^{-\frac{q}{2-q}}.
		\end{equation*}
		Notice that $M_0=1=N_0$ and
		\begin{equation*}
		\lim_{\tau\nearrow1}N_\tau=\infty \quad \text{ and } \quad  \lim_{\tau\nearrow1}M_\tau<\infty
		\end{equation*}
		as $v$ is bounded. Therefore by continuity, the equation $M_\tau=N_\tau$ must have a largest root  $\tau_0\in[0,1)$, which satisfies
		\begin{equation*}
		M_{\tau_0}=(1-\tau_0)^{-\frac{q}{2-q}} \quad \text{ and } \quad \sup_{\mathcal{Q}_\tau}v=M_\tau\leq N_\tau \text{ for all }  1>\tau>\tau_0.
		\end{equation*}
		Especially for $\hat{\tau}:=\frac{1+\tau_0}{2}$ we have
		\begin{equation*}
		 M_{\hat{\tau}}\leq N_{\hat{\tau}}= 2^{\frac{q}{2-q}}(1-\tau_0)^{-\frac{q}{2-q}}.
		\end{equation*}

		By continuity of $v$, it achieves the value $M_{\tau_0}$ at some point $(\hat{x},\hat{t})\in\overline{\mathcal{Q}}_{\tau_0}$. For the radius $R=\frac{1-\tau_0}{2}$, we have $(\hat{x},\hat{t})+\mathcal{Q}_{R}\subset \mathcal{Q}_{\hat{\tau}}$ because for any $(y,s)\in(\hat{x},\hat{t})+\mathcal{Q}_{R}$ in space
		\begin{equation*}
			\abs{y}\leq\tau_0+\frac{1-\tau_0}{2}=\hat{\tau}
		\end{equation*}
		and in time
		\begin{equation}
			s\geq-\sigma\tau_0-\sigma\frac{1-\tau_0}{2}=-\sigma\hat{\tau}.
		\end{equation}
		Thus
		\begin{equation}
			\label{eq:parharnack0}
		\osc_{(\hat{x},\hat{t})+\mathcal{Q}_{R}}v\leq\sup_{(\hat{x},\hat{t})+\mathcal{Q}_{R}}v\leq \sup_{\mathcal{Q}_{\hat{\tau}}}v\leq 2^{\frac{q}{2-q}}(1-\tau_0)^{-\frac{q}{2-q}}.	
		\end{equation}
		
		\textbf{Step 2: Establishing an initial lower bound.}
		Next we will prove that there exists a constant $\delta:=\delta(n,p,q)$, such that
			\begin{equation*}
				v(x,\hat{t})\geq\frac{1}{2}(1-\tau_0)^{-\frac{q}{2-q}}=:\kappa
			\end{equation*}
			for all $x\in B_\rho(\hat{x})$ where the radius $\rho=\delta 
			R$. We specifically need for later that there is no dependence on the constant $\sigma$ in this estimate.
			%Lisäsin kannen q:n määritelmään
			
			By equation \eqref{eq:parharnack0}, the function $v$ satisfies the assumptions of Corollary \ref{cor:oscfix} for ${\om=2^{\frac{q}{2-q}}(1-\tau_0)^{-\frac{q}{2-q}}>1}$ and any $\eps<\min\{\sigma R,\om^{q-2}R^q\}$.
			Therefore by the corollary, there exists $\hat{C}$ and $\nu$ such that
			\begin{equation*}
				\osc_{B_\rho(\hat{x})}v(\cdot,\hat{t})\leq \hat{C}\left(2^{\frac{q}{2-q}}(1-\tau_0)^{-\frac{q}{2-q}}\right)\left(\frac{\rho}{R}\right)^\nu
			\end{equation*}
			for any $0<\rho<R$. Pick $\rho=\delta R$ for some $\delta:=\delta(n,p,q)$ small enough to satisfy the inequality $1-\hat{C}\delta^\nu2^{\frac{q}{2-q}}\geq\frac12$ so that
			\begin{align*}
				\label{eq:spacepos}
				v(x,\hat{t})&\geq\inf_{B_{\delta R}(\hat{x})}v(\cdot,\hat{t})=\sup_{B_{\delta R}(\hat{x})}v(\cdot,\hat{t})-\osc_{B_{\delta R}(\hat{x})}v(\cdot,\hat{t})\\
				&\geq v(\hat{x},\hat{t})-\hat{C}\left(\frac{\delta R}{R}\right)^\nu2^{\frac{q}{2-q}}(1-\tau_0)^{-\frac{q}{2-q}}
				\\&=\left(1-\hat{C}\delta^\nu2^{\frac{q}{2-q}}\right)(1-\tau_0)^{-\frac{q}{2-q}}\geq\frac12(1-\tau_0)^{-\frac{q}{2-q}}=\kappa \numberthis
			\end{align*}
			for all $x\in B_{\rho}(\hat{x})$. \\
		\textbf{Step 3: Time expansion of positivity.}
		We have managed to prove the positivity of $v$ in a small ball for time $\hat{t}$ and now we intend to improve this estimate to get positivity in a time cylinder. Consider the translated comparison function $\Phi(x-\hat{x},t-\hat{t})$ introduced in \eqref{eq:compfunc} for choices $\kappa$ and $\rho$ introduced above. By Lemma \ref{le:compfunc}, we have thus that $\Phi$ is a viscosity subsolution to \eqref{eq:rgnppar} in
		\begin{equation}
		(\hat{x},\hat{t})+\mathcal{P}_{\kappa,\xi}:=B_{(\hat{R}(t-\hat{t}))^\frac{1}{q}}(\hat{x})\times\left(\hat{t},\hat{t}+\frac{\kappa^{2-q}\rho^q}{\eta\xi}\right)
		\end{equation}
		for time dependent radius $\hat{R}(t):=\eta\kappa^{q-2}t+\rho^q$. We choose
		\begin{equation*}
		3\sigma:=\frac{\kappa^{2-q}\rho^q}{\eta\xi}=\frac{(1-\tau_0)^{-q}\rho^q}{2^{2-q}\eta\xi}
		\end{equation*}
		where $\sigma$ is the constant we did not yet choose in the definition of our cylinders $\mathcal{Q}_\rho$. Now by \eqref{eq:spacepos} and \eqref{eq:compupperbound}, it holds
		\begin{equation*}
		v(x,\hat{t})\geq\kappa\geq\Phi(x-\hat{x},\hat{t}-\hat{t})
		\end{equation*}
		and by positivity $v\geq\Phi$ on the spatial boundary. Thus by the comparison principle Theorem \ref{thm:comp}
		\begin{equation*}
		v\geq\Phi \text{ in } \left\{\abs{x-\hat{x}}^q<\hat{R}(3\sigma-\hat{t})\right\}\times\left\{0<t-\hat{t}<3\sigma\right\}
		\end{equation*}
		so in particular as $\rho\leq(\hat{R}(3\sigma-\hat{t}))^{\frac{1}{q}}$, we get for $t-\hat{t}\in(\sigma,3\sigma)$ and $\abs{x}\leq\rho$
		\begin{align*}
		\label{eq:timepos}
		v(x,t)&\geq\frac{\kappa\rho^{q\xi}}{\left(\eta\kappa^{q-2}(3\sigma)+\rho^q\right)^\xi}\left(1-\left(\frac{\rho^q}{\eta\kappa^{q-2}\sigma+\rho^q}\right)^{\frac{1}{q-1}}\right)_+^2\\&= \frac{\frac12(1-\tau_0)^{-\frac{q}{2-q}}}{\left(\frac{1}{\xi}+1\right)^\xi}\left(1-\left(\frac{3\xi}{3\xi+1}\right)^{\frac{1}{q-1}}\right)_+^2\\
		&=:\hat{c}(n,p,q)(1-\tau_0)^{-\frac{q}{2-q}}. \numberthis
		\end{align*}
		We do not have a way to know the exact location of $\hat{t}$ inside $\mathcal{Q}_{\tau_0}$ but we know that $\hat{t}\in(-1,0)$ and $\sigma\in(0,1)$. Thus as 
		\begin{equation}
		(\sigma,2\sigma)\subset\bigcap_{\hat{t}\in(-1,0)}(\hat{t}+\sigma,\hat{t}+3\sigma),
		\end{equation}
		we have estimate \eqref{eq:timepos} for all $(x,t)\in B_\rho(\hat{x})\times(\sigma,2\sigma)$. As $q\nearrow2$, we have $\sigma\searrow0$ and hence the set converges towards an empty set. To get the estimate for values of $q$ near 2, we repeat a similar argument but with a different comparison function.\\
		
		\begin{figure}[h]\label{fig:timeexpansion}
		\includegraphics[scale=0.9]{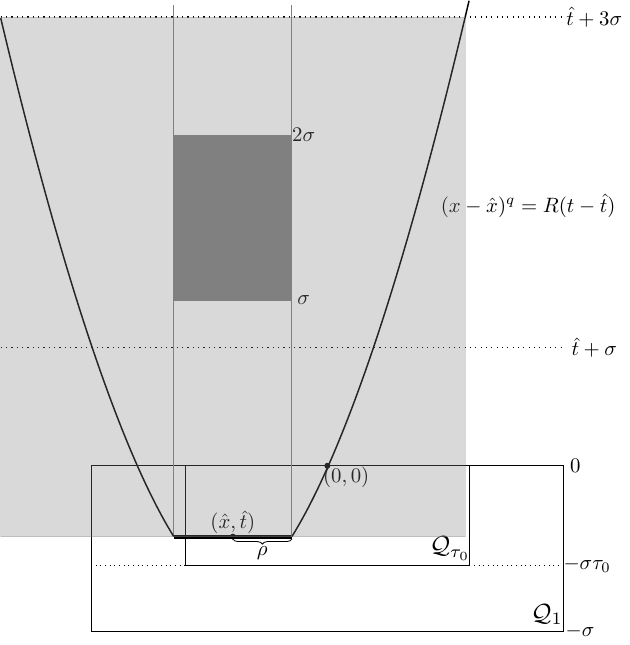}
			\caption{Illustration of the time expansion. We use the comparison principle over the light gray cylinder and get the final estimate over the gray cylinder.}
		\end{figure}

		\textbf{Step 4: Sidewise expansion of positivity.}
		We will next expand the positivity set of $v$ over $B_1(\hat{x})$ for a specific time slice using yet another comparison function to finally get the estimate \eqref{eq:parharnack1}.
		Choose
		\begin{equation}
		\label{eq:pickk}
		k=\hat{c}(1-\tau_0)^{-\frac{q}{2-q}},
		\end{equation} we got from \eqref{eq:timepos},
		\begin{equation}
		\label{eq:picknu}
		\nu:=\frac{\rho}{3}
		\end{equation}
		and let $\theta$ be given by \eqref{eq:cylcompconst} for this $k$ and $\nu$.
		We have \eqref{eq:timepos} for all $(x,t)\in B_\rho(\hat{x})\times(\sigma,2\sigma)$ so we can repeat the previous with a smaller constant $\hat{\sigma}=\min\{\theta,\sigma\}$. We want to use a translated and scaled version of the comparison function
		\begin{equation*}
		\Psi\left(\frac{x-\hat{x}}{3},\frac{t-\hat{\sigma}}{3^q}\right)
		\end{equation*}
		in the annular cylindrical domain
		\begin{equation*}
		\hat{\mathcal{C}}:=\left\{\rho<\abs{x-\hat{x}}<3\right\}\times(\hat{\sigma},2\hat{\sigma})
		\end{equation*}
		where we introduced $\Psi$ in \eqref{eq:cylcompfunc}. This rescaled $\Psi$ is a viscosity subsolution to \eqref{eq:rgnppar} in $\hat{\mathcal{C}}$ by Lemma \ref{le:cylcomp}. Notice that this $\Psi$ vanishes for $x\in \partial B_3(\hat{x})$ or $t=\hat{\sigma}$ and that 
		\begin{equation*}
		\Psi\left(\frac{x-\hat{x}}{3},\frac{t-\hat{\sigma}}{3^q}\right)\leq k =\hat{c}(1-\tau_0)^{-\frac{q}{2-q}}
		\end{equation*}
		everywhere in $\hat{\mathcal{C}}$. Combining this estimate with \eqref{eq:timepos} we have
		\begin{equation*}
		\Psi\left(\frac{x-\hat{x}}{3},\frac{t-\hat{\sigma}}{3^q}\right)\leq v(x,t)
		\end{equation*}
		for all $(x,t)\in\{\abs{x-\hat{x}}=\rho\}\times(\hat{\sigma},2\hat{\sigma})$ by continuity of $v$. Thus we can again use the comparison principle to get $\Psi\leq v$ in the entire set $\hat{\mathcal{C}}$.
		
		In $B_2(\hat{x})\times\{t=2\hat{\sigma}\}$, for our chosen $k=\hat{c}(1-\tau_0)^{-\frac{q}{2-q}}$, we have
		\begin{align*}
		v(x,2\hat{\sigma})&\geq \hat{c}(1-\tau_0)^{-\frac{q}{2-q}}\left(1-\left(\frac{2}{3}\right)^2\right)_+^{\frac{q}{q-1}}\left(1+\left[\hat{c}(1-\tau_0)^{-\frac{q}{2-q}}\right]^{\frac{2-q}{q-1}}\zeta\left(\frac{\left(\frac{2}{3}\right)^q}{2\eta\hat{\sigma} }\right)^{\frac{1}{q-1}}\right)^{-\frac{q-1}{2-q}}\\
		&\geq \inf_{0\leq\tau\leq 1} \hat{c}(1-\tau)^{-\frac{q}{2-q}}\left(1-\left(\frac{2}{3}\right)^2\right)_+^{\frac{q}{q-1}}\left(1+\left[\hat{c}(1-\tau)^{-\frac{q}{2-q}}\right]^{\frac{2-q}{q-1}}\zeta\left(\frac{\left(\frac{2}{3}\right)^q}{2\eta\hat{\sigma} }\right)^{\frac{1}{q-1}}\right)^{-\frac{q-1}{2-q}}\\
		&=:\mu_0(n,p,q).
		\end{align*}
		Thus by taking infimum over $B_1(0)\subset B_2(\hat{x})$, we get
		\begin{equation*}
		\inf_{B_{1}(0)}v(\cdot,2\hat{\sigma})\geq\mu_0
		\end{equation*}
		so we have proved estimate \eqref{eq:parharnack1} for $c_0:=2\hat{\sigma}$.

		\textbf{Case of $q$ near 2.}
		The case of $q$ near 2 is quite similar to the proof we presented above. Let $\eps>0$ be a small number to be fixed later and assume that $q\in(2-\eps,2+\eps)$. This time we define a family of nested expanding cylinders by
		\begin{equation*}
		\mathcal{Q}_\tau:=\left\{(x,t)\in Q\mid \abs{x}<\tau, t\in(-\tau^q,0)\right\}
		\end{equation*}
		so they no longer depend on the constant $\sigma$. We define
		\begin{equation*}
		M_\tau:=\sup_{\mathcal{Q}_\tau}v, \qquad N_\tau:=(1-\tau)^{-\beta}
		\end{equation*}
		for $\beta>0$ to be chosen later similar to the proof of the degenerate case. Again let $\tau_0\in[0,1)$ be the largest root of equation $M_\tau=N_\tau$ to ensure
		\begin{equation*}
		M_{\tau_0}=(1-\tau_0)^{-\beta} \quad \text{ and } \quad M_{\hat{\tau}}\leq 2^{\beta}(1-\tau_0)^{\beta}
		\end{equation*}
		for $\hat{\tau}=\frac{1+\tau_0}{2}$. By continuity of $v$, it achieves the value $M_{\tau_0}$ at some point $(\hat{x},\hat{t})\in\overline{\mathcal{Q}}_{\tau_0}$ and for radii $R=\frac{1-\tau_0}{2}$, we have
		\begin{equation*}
		\sup_{\mathcal{Q}_{R}(1)}v\leq\sup_{\mathcal{Q}_{\hat{\tau}}(1)}v=M_{\hat{\tau}}\leq2^{\beta}(1-\tau_0)^{-\beta}=:\omega_0>1.
		\end{equation*}
		For $q<2$, we can repeat the same steps as we used to obtain \eqref{eq:spacepos} to find $\rho:=\delta R$ for some $\delta:=\delta(n,p,q)$ small enough so that
		\begin{align*}
		v(x,\hat{t})&\geq v(\hat{x},\hat{t})-\hat{C}\left(\frac{\delta R}{R}\right)^\nu2^{\beta}(1-\tau_0)^{-\beta}
		\\&=\left(1-\hat{C}\delta^\nu2^{\beta}\right)(1-\tau_0)^{-\beta}\geq\frac12(1-\tau_0)^{-\beta}=:\kappa
		\end{align*}
		for all $x\in B_{\rho}(\hat{x})$. For $q\geq2$, we repeat the same steps but use \cite[Corollary 7.2]{Parviainen2020} instead of Corollary \ref{cor:oscfix} and pick the smaller of the two radii to get the same estimate. Without loss of generality, assume $\rho=\delta R$.
		
		This is where we need the special subsolution constructed in Lemma \ref{le:compnear2}.
		Let $\kappa$ and $\rho$ be the constants we set above and
		\begin{equation}
		\label{eq:compnear2}
		\mathcal{G}(x,t):=\frac{\kappa \rho^{\frac{\nu}{\lambda(\nu)}}}{\Sigma(t)^\nu}\left(1-\left(\frac{\abs{x}}{\Sigma(t)^{\lambda(\nu)}}\right)^{\frac{q}{q-1}}\right)_+^{\frac{q}{q-1}}.
		\end{equation}
		Consider the translated version $\mathcal{G}(x-\hat{x},t-\hat{t})$ which is a viscosity subsolution to \eqref{eq:rgnppar} in $\Rn\times\R^+$ as long as our exponent $q$ is close enough to 2. Let $\nu$ be the constant given by Lemma \ref{le:compnear2} and pick $\eps=\min\left\{\frac{4(1+2\nu)}{1+4\nu}-2,\frac{1}{3},\frac{1}{\nu}\right\}$. The first two numbers ensure that $\mathcal{G}$ is a viscosity solution by Lemma \ref{le:compnear2} and the restriction $\eps\leq\frac{1}{\nu}$ is here to ensure that $\lambda(\nu)\geq0$ for all $q$ in our range.
		
		At time level $t=c_0$, the support of $\mathcal{G}(x-\hat{x},c_0-\hat{t})$ is the set
		\begin{equation*}
		\supp\mathcal{G}(x-\hat{x},c_0-\hat{t})=\left\{x\in\Rn\Big|\abs{x-\hat{x}}<\Sigma(c_0-\hat{t})^{\lambda(\nu)}\right\}
		\end{equation*}
		where
		\begin{equation*}
		\lambda(\nu)=\frac{1-\nu(q-2)}{q}
		\end{equation*}
		and
		\begin{align*}
		\Sigma(c_0-\hat{t})&=\left(\frac{p-1}{q-1}\right)\kappa^{q-2}\rho^{(q-2)\frac{\nu}{\lambda(\nu)}}\left(c_0-\hat{t}\right)+\rho^{\frac{1}{\lambda(\nu)}}. \\&
		=\left(\frac{p-1}{q-1}\right)\left(\frac12(1-\tau_0)^{-\beta}\right)^{q-2}\left(\delta R\right)^{(q-2)\frac{\nu}{\lambda(\nu)}}\left(c_0-\hat{t}\right)+\rho^{\frac{1}{\lambda(\nu)}} \\&
		=A(1-\tau_0)^{(q-2)\left(\frac{\nu}{\lambda(\nu)}-\beta\right)}\left(c_0-\hat{t}\right)+\rho^{\frac{1}{\lambda(\nu)}}.
		\end{align*}
		Here we used $R=\frac{1-\tau_0}{2}$ and defined \begin{equation*}
		A:=\left(\frac{p-1}{q-1}\right)\left(\frac12\left(\frac{\delta}{2}\right)^{\frac{\nu}{\lambda(\nu)}}\right)^{q-2}.
		\end{equation*}
		We choose
		\begin{equation*}
		\beta=\frac{\nu}{\lambda(\nu)} \quad \text{ and } \quad c_0=\frac{3^{\frac{1}{\lambda(\nu)}}}{A}+\hat{t}
		\end{equation*}
		
		and since $\abs{\hat{x}}<1$ and $\hat{t}\in(-1,0]$, these choices ensure
		\begin{equation*}
		\supp\mathcal{G}(x-\hat{x},c_0-\hat{t})=\left\{\abs{x-\hat{x}}<\left(A\left(\frac{3^{\frac{1}{\lambda(\nu)}}}{A}+\hat{t}-\hat{t}\right)+\rho^{\frac{1}{\lambda(\nu)}}\right)^{\lambda(\nu)}\right\}\supset B_3(\hat{x})
		\end{equation*}
		and thus $B_2(0)\subset\supp\mathcal{G}(x-\hat{x},c_0-\hat{t})$. In the set $\supp\mathcal{G}(x-\hat{x},\hat{t}-\hat{t})=B_{\rho}(\hat{x})$, we have
		\begin{equation*}
		\mathcal{G}(x-\hat{x},\hat{t}-\hat{t})=\kappa\left(1-\left(\frac{\abs{x-\hat{x}}}{\rho}\right)^{\frac{q}{q-1}}\right)_+^{\frac{q}{q-1}}\leq\kappa\leq v(x,t)
		\end{equation*}
		and similarly $\mathcal{G}\leq v$ on the rest of $\partial_{p}\left(B_2(\hat{x})\times[\hat{t},c_0]\right)$ because we assumed $v$ to be positive. Hence by the comparison principle Theorem \ref{thm:comp}
		\begin{align*}
		\inf_{B_{1}(0)}v(\cdot,c_0)&\geq\inf_{B_{1}(0)}\mathcal{G}(\cdot,c_0)\\
		&=\frac{\kappa \rho^{\frac{\nu}{\lambda(\nu)}}}{\left(3^{\frac{1}{\lambda(\nu)}}+\rho^{\frac{1}{\lambda(\nu)}}\right)^\nu}\left(1-\left(\frac{1}{\left(3^{\frac{1}{\lambda(\nu)}}+\rho^{\frac{1}{\lambda(\nu)}}\right)^{\lambda(\nu)}}\right)^{\frac{q}{q-1}}\right)_+^{\frac{q}{q-1}}\\
		&\geq\frac12 \left(\frac{\delta}{2}\right)^{\frac{\nu}{\lambda(\nu)}}\frac{1}{\left(3^{\frac{1}{\lambda(\nu)}}+\rho^{\frac{1}{\lambda(\nu)}}\right)^\nu}\left(1-\left(\frac{1}{3}\right)^{\frac{q}{q-1}}\right)_+^{\frac{q}{q-1}}\\
		&\geq3^{-\frac{\nu}{\lambda(\nu)}}2^{-(1+2\nu)} \left(\frac{\delta}{2}\right)^{\frac{\nu}{\lambda(\nu)}}\left(1-\left(\frac{1}{3}\right)^{\frac{q}{q-1}}\right)_+^{\frac{q}{q-1}}\\
		&=:\mu_0(n,p,q),
		\end{align*}
		so we have proven \eqref{eq:parharnack1}. Notice that all constants used here are stable as $q\to2$ from either side. 
	\end{proof}
	\section{Backward intrinsic Harnack's inequality}
	\label{sec:backward}
	In this section, we will prove the backward intrinsic Harnack's inequality for the optimal range of exponents \eqref{eq:range}. We proved the singular case as Theorem 5.2 in \cite{Kurkinen2022} but the degenerate case has not been proven before to the best of our knowledge for equation \eqref{eq:rgnppar}. The degenerate case is proven for the standard $p$-parabolic equation \cite[Section 5.3]{Dibenedetto2012}. All proofs are based on using the forward inequality in a specific way taking into account the intrinsic scaling. In the degenerate case, we move backward in time centered at $x_0$ seeking for a time where the function obtains a value larger than $\mu u(x_0,t_0)$. We handle the case of such time existing and not existing separately and show that in both cases we get the backward inequality using the forward inequality. The main difference to the singular case is that when $q\geq 2$, we have to assume that ${u(x_0,t_0)>0}$ or the inequality will not hold. The case $q=2$ follows directly from forward Harnack's inequality as we do not have to worry about the intrinsic scaling. In the singular case, the amount of space needed around our space-time cylinder depends on $n$, $p$, and $q$ but we improve this result using covering arguments in the next section.
	\begin{thm:gbackharnack}
		Let $u \geq 0$ be a viscosity solution to \eqref{eq:rgnppar} in $Q_{1}^{-}(1)$ and let the range condition \eqref{eq:range} hold. Fix $\left(x_{0}, t_{0}\right) \in Q_{1}^{-}(1)$ such that $u(x_0,t_0)>0$. Then there exist $\gamma=\gamma(n, p, q)$, ${c}={c}(n, p, q)$ and ${\sigma=\sigma(n,p,q)>1}$ such that
		\begin{equation*}
		\gamma^{-1}\sup_{B_r(x_0)}u(\cdot,t_0-\theta r^{q})\leq u(x_0,t_0)\leq\gamma\inf_{B_{r}(x_0)}u(\cdot,t_0+\theta r^{q})
		\end{equation*}
		where
		\begin{equation*}
		\theta={c}u\left(x_{0}, t_{0}\right)^{2-q},
		\end{equation*}
		whenever $(x_0,t_0)+Q_{\sigma r}(\theta) \subset Q_{1}^{-}(1)$.
	\end{thm:gbackharnack}
	\begin{proof}
		Let $c$ and $\mu$ be the constants we get from Theorem \ref{thm:parharnack} and let ${\theta={c}u\left(x_{0}, t_{0}\right)^{2-q}}$.
		\begin{case}[q<2]The case $q<2$ is Theorem 5.2. in \cite{Kurkinen2022} where we get the theorem for constant $\sigma=\frac{6}{\alpha}$ where $\alpha=(2\mu)^{\frac{q-2}{q}}<1$.
		\end{case}
		Apart from the non-emptyness of $\mathcal{U}_{\alpha}$ this proof extends directly to the case $q=2$ but this can be done easier as we do not have intrinsic time scaling in this case.
		\begin{case}[q=2]
			Let $\bar{t}=t_0-cr^2$ and $y\in B_r(x_0)$. Now by Theorem \ref{thm:parharnack} at the point $(y,\bar{t})$, we get an estimate
			\begin{equation}
			\label{eq:gbackharnack1}
			u(y,\bar{t})\leq\mu\inf_{B_{r}(y)}u(\cdot,\bar{t}+cr^2)=\mu\inf_{B_{r}(y)}u(\cdot,t_0)\leq \mu u(x_0,t_0).
			\end{equation}
			This holds for any $y\in B_r(x_0)$ and thus by taking supremum over all of them we get
			\begin{equation}
			\label{eq:gbackharnack2}
			\sup_{B_r(x_0)}u(\cdot,t-cr^2)\leq\mu u(x_0,t_0),
			\end{equation}
			as desired. The use of Harnack's inequality in \eqref{eq:gbackharnack1} is justified because in the space direction $B_{4r}(y)\subset B_{5r}(x_0)\subset B_{\frac{6}{\alpha}r}(x_0)$ and in the time direction we have
			\begin{equation}
			\bar{t}-c(4r)^2\geq t_0-cr^2-c(4r)^2=t_0-c(5r^2)\geq t_0-c(\sigma r^2),
			\end{equation}
			for any $\sigma>5$.
			
			Inequality \eqref{eq:gbackharnack2} combined with Theorem \ref{thm:parharnack} proves the inequality in the case $q=2$.
		\end{case}
		\begin{case}[q>2]	
			Finally, let $q>2$ where we again have to deal with the time-scaling. Let $\rho$ be a radius such that $(x_0,t_0)+Q_{6\rho}(\theta)\subset Q_{1}^-(1)$ for $\theta=cu(x_0,t_0)^{2-q}$ and define the set
			\begin{equation*}
			{\mathcal{T}=\{t\in (t_0-\theta(4\rho)^q,t_0)\mid u(x_0,t)=2\mu u(x_0,t_0)\}}.
			\end{equation*}
			Now $\mathcal{T}$ is either empty or non-empty. If it happens that $\mathcal{T}\not=\emptyset$, there exists a largest $\tau\in\mathcal{T}$ by continuity of $u$. For a time like this, it must hold that
			\begin{equation}
			\label{eq:degremark5}
			t_0-\tau>cu(x_0,\tau)^{q-2}\rho^q=c\left(2\mu u(x_0,t_0)\right)^{2-q}\rho^q,
			\end{equation}
			because otherwise we can choose $\hat{\beta}\in(0,1)$ such that
			\begin{equation*}
			\tau+cu(x_0,\tau)^{q-2}(\hat{\beta} \rho)^q=t_0
			\end{equation*} and use Theorem \ref{thm:parharnack} on the point $(x_0,\tau)$ for radius $\hat{\beta}\rho$ to get
			\begin{equation*}
			2\mu u(x_0,t_0)=u(x_0,\tau)\leq\mu\inf_{B_{\hat{\beta}\rho}(x_0)}u(\cdot,t_0)\leq u(x_0,t_0).
			\end{equation*}
			This is a contradiction assuming that we have suitable space to use the forward Harnack's inequality. This is automatically satisfied in space as we are centered at $x_0$ and in time we have
			\begin{align*}
			\tau-cu(x_0,\tau)^{2-q}(4\hat{\beta}\rho)^q&>\tau-cu(x_0,\tau)^{2-q}(4\rho)^q=\tau-c(2\mu)^{2-q}u(x_0,t_0)^{2-q}(4\rho)^q\\&>t_0-\theta(4\rho)^q-(2\mu)^{2-q}\theta(4\rho)^q=t_0-\left(1+(2\mu)^{2-q}\right)\theta(4\rho)^q\\&>t_0-\theta(6\rho)^q,
			\end{align*}
			where the last inequality holds for all $\mu>1$ as $1+(2\mu)^{2-q}4^q<2\cdot4^q<6^q$ for $q\geq2$.
			Here we used the fact that $\tau>t_0-\theta(4\rho)^q$ and $u(x_0,\tau)=2\mu u(x_0,t_0)$ by the definition of $\mathcal{T}$.
			Set
			\begin{equation}
			\label{eq:degremark8}
			s=t_0-c(2\mu u(x_0,t_0))^{2-q}\rho^q
			\end{equation} and notice that by \eqref{eq:degremark5}, it holds $s\in(\tau,t_0)$ and
			\begin{equation*}
			u(x_0,s)\leq2\mu u(x_0,t_0).
			\end{equation*}
			Assume thriving for a contradiction that there exists $y\in B_{\rho}(x_0)$ such that
			\begin{equation}
			\label{eq:degremark7}
			u(y,s)=2\mu u(x_0,t_0),
			\end{equation}
			and note that
			\begin{equation*}
			s+cu(y,s)^{2-q}\rho^q=t_0.
			\end{equation*}
			Therefore assuming there is enough room to use Theorem \ref{thm:parharnack}, we get
			\begin{equation*}
			2\mu u(x_0,t_0)=u(y,s)\leq\mu\inf_{B_{\rho}(y)}u(\cdot,s+cu(x_0,t_0)^{2-q}\rho^q)=\mu\inf_{B_{\rho}(y)}u(\cdot,t_0)\leq\mu u(x_0,t_0).
			\end{equation*}
			We have enough room in space as $B_{4\rho}(y)\subset B_{5\rho}(x_0)$ and by \eqref{eq:degremark8} and \eqref{eq:degremark7} in time, it holds
			\begin{align*}
			s-cu(y,s)^{2-q}(4\rho)^q&=t_0-c(2\mu u(x_0,t_0))^{2-q}\rho^q-c(2\mu u(x_0,t_0))^{2-q}(4\rho)^q\\
			&=t_0-\theta\left[\left(2\mu+4^{\frac{q}{2-q}}2\mu\right)^{\frac{2-q}{q}}\rho\right]^q>t_0-\theta(6\rho)^q,
			\end{align*}
			where the last inequality holds for all $\mu>1$ because for $q>2$ we have
			\begin{equation*}
			\left(2\mu+4^{\frac{q}{2-q}}2\mu\right)^{\frac{2-q}{q}}<(2\mu)^{\frac{2-q}{q}}<1.
			\end{equation*}
			Therefore such $y\in B_{\rho}(x_0)$ cannot exist and we have
			\begin{equation*}
			u(y,s)<2\mu u(x_0,t_0) \quad \text{ for all } y\in B_{\rho}(x_0)
			\end{equation*}
			and thus by definition of $s$
			\begin{equation}
			\label{eq:degremark6}
			\sup_{B_{\rho}(x_0)}u(\cdot,t_0-c(2\mu u(x_0,t_0))^{2-q}\rho^q)\leq2\mu u(x_0,t_0)
			\end{equation}
			Let $r=(2\mu)^{\frac{2-q}{q}}\rho\leq \rho$ and rewrite \eqref{eq:degremark6} as
			\begin{equation*}
			u(x_0,t_0)\geq(2\mu)^{-1}\sup_{B_{\rho}(x_0)}u\left(\cdot,t_0-\theta \left((2\mu)^{\frac{2-q}{q}}\rho\right)^q\right)\geq(2\mu)^{-1}\sup_{B_{r}(x_0)}u\left(\cdot,t_0-\theta r^q\right).
			\end{equation*}
			This combined with Theorem \ref{thm:parharnack} for radius $r$ and taking $2\mu$ gives
			\begin{equation}
			\label{eq:degrehar1}
			{(2\mu)}^{-1}\sup_{B_{r}(x_0)}u\left(\cdot,t_0-\theta r^q\right)\leq u(x_0,t_0)\leq\mu\inf_{B_{r}(x_0)}u(\cdot,t_0+\theta r^q)\leq2\mu\inf_{B_{r}(x_0)}u(\cdot,t_0+\theta r^q)
			\end{equation}
			which is what we wanted.
			If it happens that $\mathcal{T}=\emptyset$, we have 
			\begin{equation}
			\label{eq:degremark3}
			u(x_0,t)<2\mu u(x_0,t_0) \text{ for all } t\in(t_0-\theta(4\rho)^q,t_0)
			\end{equation} 
			by continuity of $u$. Assume thriving for a contradiction that
			\begin{equation}
			\label{eq:degremark4}
			\sup_{B_{r}(x_0)}u(\cdot,t_0-\theta r^q)>2\mu^2u(x_0,t_0)
			\end{equation}
			which implies by continuity that there exists a point $x_*\in B_r(x_0)$ such that
			\begin{equation}
			\label{eq:degremark1}
			u(x_*,t_0-\theta r^q)=2\mu^2u(x_0,t_0).
			\end{equation}
			Assuming we have enough room to use Theorem \ref{thm:parharnack} around the point $(x_*,t_0-\theta r^q)$, we have
			\begin{equation}
			\label{eq:degremark2}
			u(x_*,t_0-\theta r^q)\leq\mu\inf_{B_{r}(x_*)}u(\cdot,t_0-\theta r^q+cu(x_*,t_0-\theta r^q)^{2-q}r^q).
			\end{equation}
			The required space here is $(x_0,t_0)+Q_{5r}\subset Q_{1}^-$ because we need to make sure that $B_{4r}(x_*)\subset B_1$. In time we do not need more room because
			\begin{align*}
			t_0-\theta r^q-cu(x_*,t_0-\theta r^q)^{2-q}(4r)^q&=t_0-c\left[u(x_0,t_0)^{2-q}+4^q\left(2\mu^2u(x_0,t_0)\right)^{2-q}\right]r^q\\
			&=t_0-\left[1+4^q(2\mu^2)^{2-q}\right]cu(x_0,t_0)^{2-q}r^q\\
			&=t_0-\theta\left(\left[1+4^q(2\mu^2)^{2-q}\right]^{\frac{1}{q}}r\right)^q\\&\geq t_0-\theta(6r)^q
			\end{align*}
			where the last inequality holds assuming $q\geq2$ and
			\begin{equation*}
			1+4^q(2\mu^2)^{2-q}<6^q
			\end{equation*}
			which is true for any $\mu\geq1$ as $(2\mu^2)^{2-q}\leq1$.
			We can estimate the time level by using equation \eqref{eq:degremark1} to get
			\begin{align*}
			t_0-\theta r^q-cu(x_*,t_0-\theta r^q)^{2-q}r^q&=t_0-c\left(u(x_0,t_0)^{2-q}-u(x_*,t_0-\theta r^q)^{2-q}\right)r^q \\
			&=t_0-c\left(u(x_0,t_0)^{2-q}-\left(2\mu^2 u(x_0,t_0)\right)^{2-q}\right)r^q \\
			&=t_0-\left(1-(2\mu)^{2-q}\right)\theta r^q<t_0,
			\end{align*}
			where the last inequality follows from $q>2$ and taking $\mu>1$. Therefore because $x_0\in B_{r}(x_*)$, combining equations \eqref{eq:degremark2} and \eqref{eq:degremark3} we get a contradiction
			\begin{equation*}
			2\mu^2u(x_0,t_0)=u(x_*,t_0-\theta r^q)\leq \mu u(x_0,\cdot,t_0-\theta r^q+cu(x_*,t_0-\theta r^q)^{2-q}r^q)<2\mu^2u(x_0,t_0).
			\end{equation*}
			Thus inequality \eqref{eq:degremark4} cannot hold and we have
			\begin{equation}
			\sup_{B_{r}(x_0)}u(\cdot,t_0-\theta r^q)\leq2\mu^2u(x_0,t_0).
			\end{equation}
			Dividing both sides by $2\mu^2$ and combining this with Theorem \ref{thm:parharnack} gives us
			\begin{equation}
			\label{eq:degrehar2}
			(2\mu^2)^{-1}\sup_{B_{r}(x_0)}u(\cdot,t_0-\theta r^q)\leq u(x_0,t_0)\leq\mu\inf_{B_{r}(x_0)}u(\cdot,t_0+\theta r^q)\leq2\mu^2\inf_{B_{r}(x_0)}u(\cdot,t_0+\theta r^q).
			\end{equation}
			as desired. Because $\mathcal{T}$ has to be either empty or non-empty, combining \eqref{eq:degrehar1} and \eqref{eq:degrehar2} gives us Harnack's inequality for constant $\gamma=2\mu^2$.
			
			Our space requirement $(x_0,t_0)+Q_{6\rho}(\theta)\subset Q_{1}^-(1)$ becomes
			$(x_0,t_0)+Q_{\frac{6}{\alpha}r}(\theta)\subset Q_{1}^-(1)$ for $\alpha=(2\mu)^{\frac{2-q}{q}}<1$ so we have the result for $\sigma=\frac{6}{\alpha}$.
		\end{case}
	\end{proof}
	\section{The covering argument}
	\label{sec:cov}
	The intrinsic Harnack requires a lot of room around the target cylinder if $\mu$ from Theorem \ref{thm:parharnack} happens to be large. The amount of needed room can be reduced by using a covering argument but details about this are hard to find in the literature. In the time-independent case, this can be done easily by covering by small balls but in our case, the intrinsic scaling in the time direction can cause problems with the sets. We apply the covering argument in two steps: We first prove, in Lemma \ref{lem:time covering} below, that we can reduce the needed room as much as we want in the time variable by relaxing our constant in the space variable. Then by a second covering argument, we prove that we can gain back what we lost in the space variable without relaxing the time direction.

	We only consider the forward Harnack's inequality as the presented proof can be directly modified for the backward version. We point out, however, that since the argument iteratively applies Harnack's inequality, it yields different constants $c$ and $\mu$ for the backward and forward versions.
	
	We use right-angled paths connecting two points to deal with space and time variables separately.
	Given $(x,t),(y,s)\in\mathbb{R}^{n+1}$, we denote by $\gamma_{(x,t)}^{(y,s)}:[0,1)\rightarrow\mathbb{R}^{n+1}$
	a path from $(x,t)$ to $(y,s)$ such that
	\[
	\gamma([0,1))=[(x,t),(y,t)]\cup[(y,t),(y,s)).
	\]
	That is, $\gamma_{(x,t)}^{(y,s)}$ first moves from $(x,t)$ to $(y,t)$
	in space, and then from $(y,t)$ to $(y,s)$ in time. 
	For $a\in\mathbb{R}$, we denote by $\left\lceil a\right\rceil \in\mathbb{Z}$
	the number $a$ rounded up to the nearest integer. 

	\begin{lemma}
		\label{lem:time covering} 
		Let $u \geq 0$ be a viscosity solution to \eqref{eq:rgnppar} in $Q_{1}^{-}(1)$ and let the range condition \eqref{eq:range} hold. Fix $\left(x_{0}, t_{0}\right) \in Q_{1}^{-}(1)$ such that $u(x_0,t_0)>0$. Then for any $\sigma_t>1$ there exist $\mu=\mu(n, p, q,\sigma_t)$, $\alpha = \alpha(n, p, q, \sigma_t)$, $c=c(n, p, q, \sigma_t)$ and $\sigma_x=\sigma_x(n,p,q,\sigma_t)$ such that
		\begin{equation*}
		u(x_{0},t_{0})\leq\mu\inf_{B_{\alpha r}(x_{0})}u(\cdot,t_{0}+\theta r^{q})
		\end{equation*}
		whenever
		\begin{equation*}
		(x_{0},t_{0})+B_{\sigma_{x}r}(x_{0})\times(t_{0}-\theta(\sigma_{t}r)^{q},t_{0}+\theta(\sigma_{t}r)^{q})\subset Q_{1}^{-}(1),
		\end{equation*}
		where $\theta:={c}u(x_{0},t_{0})^{2-q}$.
	\end{lemma}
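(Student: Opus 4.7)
The plan is to iterate the forward Harnack inequality (Theorem~\ref{thm:parharnack}) $N$ times along a vertical chain centered at $x_{0}$, where $N=N(\sigma_{t},n,p,q)$ is chosen sufficiently large. Set $\Delta:=\theta r^{q}/N$ and define $t_{i+1}:=t_{i}+\Delta$ recursively for $i=0,\ldots,N-1$. At step $i$, take the radius $\rho_{i}$ satisfying $c\,u(x_{0},t_{i})^{2-q}\rho_{i}^{q}=\Delta$, so that Theorem~\ref{thm:parharnack} applied at $(x_{0},t_{i})$ with radius $\rho_{i}$ yields the per-step inequality
\begin{equation*}
u(x_{0},t_{i})\leq\mu\inf_{B_{\rho_{i}}(x_{0})}u(\cdot,t_{i+1}).
\end{equation*}
Chaining these, an easy induction gives $u(x_{0},t_{i})\geq u(x_{0},t_{0})/\mu^{i}$, and the $N$-th iterate is
\begin{equation*}
u(x_{0},t_{0})\leq\mu^{N}\inf_{B_{\rho_{N-1}}(x_{0})}u(\cdot,t_{0}+\theta r^{q}),
\end{equation*}
which is exactly the desired conclusion with new Harnack constant $\mu^{N}$ and $\alpha=\rho_{N-1}/r$.

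The time-room accounting is the easy part. Each individual application centered at $(x_{0},t_{i})$ requires a forward window of length $\sigma^{q}\Delta=\sigma^{q}\theta r^{q}/N$ beyond $t_{i}$, so the total forward extent used across all $N$ applications is at most $\theta r^{q}\bigl(1+(\sigma^{q}-1)/N\bigr)$; choosing $N>(\sigma^{q}-1)/(\sigma_{t}^{q}-1)$ makes this bounded by $\theta(\sigma_{t}r)^{q}$, as required. A symmetric estimate handles the backward time room. The spatial room $\sigma_{x}r=\sigma\max_{i}\rho_{i}$ and the parameter $\alpha=\rho_{N-1}/r$ then depend only on $n,p,q,\sigma_{t}$ as soon as we establish a uniform bound on $\max_{i}\rho_{i}/r$.

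The main obstacle is precisely this uniform bound on $\rho_{i}=rN^{-1/q}\bigl(u(x_{0},t_{0})/u(x_{0},t_{i})\bigr)^{(2-q)/q}$, because the sign of the exponent flips between regimes. In the singular regime $q<2$ the exponent is positive, and the inductive lower bound $u(x_{0},t_{i})\geq u(x_{0},t_{0})/\mu^{i}$ directly gives $\rho_{i}\leq rN^{-1/q}\mu^{N(2-q)/q}$. In the degenerate regime $q>2$ the exponent is negative, so one needs an additional upper bound on $u(x_{0},t_{i})$; I expect to obtain this by running a second chain backwards, invoking the backward Harnack of Theorem~\ref{thm:gbackharnack} (which is already proven in Section~\ref{sec:backward}) at points slightly past $t_{i}$ to sandwich $u(x_{0},t_{i})$ between universal multiples of $u(x_{0},t_{0})$. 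The case $q=2$ is immediate since the intrinsic factor $u^{2-q}=1$ makes the scaling solution-independent.
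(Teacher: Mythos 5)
Your high-level idea—a vertical Harnack chain centered at $x_0$—is the right one, but the decomposition you chose (fixed time steps $\Delta$, floating radii $\rho_i$) has a gap that, as far as I can tell, cannot be patched by your suggested fix. The radii are $\rho_i = r N^{-1/q}\bigl(u(x_0,t_0)/u(x_0,t_i)\bigr)^{(2-q)/q}$, so controlling them uniformly requires not only the lower bound $u(x_0,t_i)\geq u(x_0,t_0)/\mu^i$ (which the chain supplies) but also an \emph{upper} bound $u(x_0,t_i)\leq C\,u(x_0,t_0)$ with a universal $C$. You correctly note this is needed when $q>2$ (otherwise $\rho_i$ may blow up and the spatial room $\sigma_x r$ is not universal), but the same upper bound is also needed when $q<2$: there the problem is the final radius $\rho_{N-1}$, which shrinks to zero if $u(x_0,t_{N-1})$ is large, so $\alpha=\rho_{N-1}/r$ is solution-dependent and not the universal constant required by the lemma. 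Your suggested fix—a second chain running the backward Harnack—does not provide this upper bound: both the forward inequality $u(x,t)\leq\mu\inf_{B_r(x)}u(\cdot,t+\theta r^q)$ and the backward inequality $\sup_{B_r(x)}u(\cdot,t-\theta r^q)\leq\gamma u(x,t)$ only control how fast a nonnegative solution may \emph{decay} forward in time; neither estimate bounds $u(x_0,t_i)$ from above in terms of the earlier value $u(x_0,t_0)$, and indeed parabolic solutions may spike between $t_0$ and $t_0+\theta r^q$.

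The paper resolves this by inverting the decomposition: it fixes the radii as a predetermined geometric sequence $\rho_i=\tilde\mu^{(2-q)i/q}\alpha r$ and then picks the iterate points $(\hat x,t_i)$ along a time path (via the intermediate value theorem) so that $u(\hat x,t_i)=u(x_0,t_0)/\tilde\mu^i$ \emph{exactly}; if no such point exists before reaching $\hat t$, the chain terminates early with an even better estimate. Because the $u$-values at the iterate points are then deterministic, the associated time steps $\tilde c\,u(\hat x,t_{i-1})^{2-q}\rho_i^q$ reduce to the constant $\tilde c\,u(x_0,t_0)^{2-q}\kappa^{-1}r^q$, the number of steps is bounded by $\lceil\kappa\rceil+1$, and both the radius of the final ball and the total spatial excursion are universal. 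In short: fix the radii and let the time instants float via the path-and-continuity trick, rather than fixing the times and letting the radii float.
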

	\begin{proof}
		Let $\tilde{c},\tilde{\mu}$ and $\tilde{\sigma}$ be the constants given by Theorem \ref{thm:gbackharnack}. That
		is, we have
		\begin{equation}
		u(x,t)\leq\tilde{\mu}\inf_{B_{l}(x)}u(\cdot,t+\tilde{c}u(x,t)^{2-q}l^{q}),\label{eq:basic harnack}
		\end{equation}
		whenever
		\begin{align*}
		\left|x\right|+\tilde{\sigma}l&<1,\\
		t\pm\tilde{c}u(x,t)^{2-q}(\tilde{\sigma}l)^{q}&\subset[0,1).
		\end{align*}
		We may assume that $\sigma_{t}<\tilde{\sigma}$ as otherwise the
		claim holds by \eqref{eq:basic harnack}. 
		We denote
		\[
		\kappa := \frac{\tilde \sigma^q - 1}{\sigma_t^q - 1}
		\]
		and set
		\begin{align*}
		\alpha & := \kappa^{-\frac{1}{q}}, \\
		\sigma_x & := \alpha (\tilde \sigma \max(1, \mu ^{\frac{2-q}{q}\lceil\kappa\rceil}) + 1), \\
		c & := \tilde c (\lceil\kappa\rceil + 1) \kappa ^{-1}, \\
		\rho_i & := \left( \mu^{(2-q)i} \kappa^{-1}\right)^{\frac{1}{q}}r = \mu^{\frac{(2-q)i}{q}}\alpha r.
		\end{align*}
		
		Let $(\hat x, \hat t)$ be the target point, that is, $\hat x \in B_{\alpha r}(x_0)$ and
		\[
		\hat t := t_0 + c u(x_0, t_0)^{2-q} r^q.
		\]
		It now suffices to prove that $u(x_0, t_0) \leq \mu u(\hat x, \hat t)$. We proceed by iteration.
		
		\textbf{Initial step:} Set 
		\[
		t_1^\ast := t_0 + \tilde c u(x_0, t_0)^{2-q} \rho_0^q.
		\]
		Then, since $|x-x_0| \leq \alpha r = \rho_0$, we have by Harnack's inequality
		\[
		u(x_0, t_0) \leq \tilde \mu u(\hat x, t_1^\ast).
		\]
		Now, let $(\hat x, t_1)$ be the first point along the path $\gamma_{(\hat x, t_1^\ast)}^{(\hat x, \hat t)}$ such that 
		\[
		u(x_0, t_0) = \tilde \mu u(\hat x, t_1).
		\]
		If no such point exists, then by continuity we must have
		\[
		u(x_0, t_0) \leq \tilde \mu u(\hat x, \hat t)
		\]
		and the claim already holds.
		
		\textbf{Iteration step:} Let $i \in \{2, \ldots\}$ and suppose that we have already chosen $(\hat x, t_{i-1})$ such that
		\[
		u(x_0, t_0) = \tilde \mu ^{i-1} u(\hat x, t_{i-1}).
		\]
		Set
		\[
		t_i^{\ast} := t_{i-1} + \tilde c u(\hat x, t_{i-1})^{2-q}\rho_i^q.
		\]
		If $u(\hat x, t_{i-1}) < \tilde \mu u(\hat x, t_i^{\ast})$, then we move along the path $\gamma_{(\hat x, t_i^\ast)}^{(\hat x, \hat t)}$ until we find a point $(\hat x, t_i)$ such that $u(\hat x, t_{i-1}) = \tilde \mu u(\hat x, t_i)$ so that
		\[
		u(x_0, t_0) = {\tilde \mu}^{i-1} u(\hat x, t_{i-1}) = {\tilde \mu}^i u(\hat x, t_i).
		\]
		If no such point exists, then by continuity we must have
		\[
		u(x_0, t_0) = {\tilde \mu}^{i-1} u(\hat x, t_{i-1}) \leq {\tilde \mu}^i u (\hat x, \hat t),
		\]
		and the claim of the lemma follows.
		
		If the iteration does not end prematurely, we continue until $t_i^\ast \geq \hat t$. When that happens, we apply Harnack's inequality one more time with a radius $\rho \leq \rho_{i-1}$ so that we obtain an estimate at the exact time level $\hat t$. We define $i_{\hat t}$ as the smallest natural number such that $t_{i_{\hat t} + 1}^\ast \geq \hat t$. We have
		\[
		i_{\hat t} \leq \lceil \kappa \rceil
		\]
		as otherwise
		\begin{align*}
		t_{i_{\hat t}} ^\ast \geq t_{\lceil\kappa\rceil + 1}^\ast & = t_{\lceil\kappa\rceil} + \tilde c u(\hat x, t_{\lceil\kappa\rceil})^{2-q}\rho_{\lceil\kappa\rceil}^q \\
		& \geq t_{\lceil\kappa\rceil}^\ast + \tilde c {\tilde \mu}^{(q-2)\lceil\kappa\rceil}u(x_0, t_0)^{2-q}\rho_{\lceil\kappa\rceil}^q \\
		& \geq t_0 + \sum_{i = 0}^{\lceil\kappa\rceil} \tilde c {\tilde \mu}^{(q-2)i} u(x_0, t_0)^{2-q} \rho_i^q \\
		& = t_0 + \tilde c u(x_0, t_0)^{2-q} \sum_{i=0}^{\lceil\kappa\rceil} \kappa^{-1}  r^q \\
		& = t_0 + \tilde c u(x_0, t_0)^{2-q} (\lceil\kappa\rceil + 1) \kappa ^{-1} \\
		& = \hat{t},
		\end{align*}
		which would be against the definition of $i_{\hat t}$. Consequently, the procedure yields the estimate
		\[
		u(x_0, t_0) \leq {\tilde \mu}^{\lceil\kappa\rceil + 1} u(\hat x, \hat t).
		\]
		We still need to verify that there is enough room to apply Harnack's inequality throughout the iteration. To this end, notice that the biggest jump in the time direction that we can do is 
		\[
		J = \tilde c u(x_0, t_0)^{2-q}\kappa^{-1} r^q.
		\]
		Therefore we always have room in time direction, since in the worst case the jump starts from $\hat t - J$, and then we have (using that $\tilde c \leq c$)
		\begin{align*}
		\hat t - J + J{\tilde \sigma}^q & = t_0 + c u(x_0, t_0)^{2-q}r^q + ({\tilde \sigma}^q - 1)\tilde c u(x_0, t_0)^{2-q}\kappa^{-1} r^q \\
		& \leq t_0 + cu(x_0, t_0)^{2-q}r^q + \tilde c (\sigma_t^q-1)u(x_0, t_0)^{2-q}r^q \\
		& \leq t_0 + cu(x_0, t_0)^{2-q}(\sigma_t r)^q < 1.
		\end{align*}
		We also have enough room in space direction since
		\begin{align*}
		\rho_i \tilde \sigma + |\hat x - x_0| & \leq \tilde \sigma \max(\rho_0, \rho_{\lceil\kappa\rceil}) + \alpha r \\
		& = \alpha (\tilde \sigma \max(1, \mu ^{\frac{2-q}{q}\lceil\kappa\rceil}) + 1)r \\
		& = \sigma_x r < 1. \qedhere
		\end{align*}
	\end{proof}
	
	We are now ready to prove the general form of Harnack's inequality. We remark that as the space required around the intrinsic cylinder $(x_0, t_0) + Q_{\sigma r}(\theta) \subset Q_1$ tends to zero (i.e.~when $\sigma \rightarrow 1$), the waiting time coefficient $c$ blows up if $q>2$, and tends to zero if $q<2$.
	
	\begin{theorem}\label{thm:harnack_general_form}
		Let $u \geq 0$ be a viscosity solution to \eqref{eq:rgnppar} in $Q_{1}^{-}(1)$ and let the range condition \eqref{eq:range} hold. Fix $\left(x_{0}, t_{0}\right) \in Q_{1}^{-}(1)$ such that $u(x_0,t_0)>0$. Then for any $\sigma>1$ there exist $\mu=\mu(n, p, q,\sigma)$ and ${c}={c}(n, p, q, \sigma)$ such that
		\begin{equation}
		u(x_{0},t_{0})\leq\mu\inf_{B_{r}(x_{0})}u(\cdot,t_{0}+cu(x_{0},t_{0})^{2-q}r^{q}),\label{eq:claim}
		\end{equation}
		whenever 
		\begin{equation*}
		(x_{0},t_{0})+B_{\sigma r}(x_{0})\times(t_{0}-\theta(\sigma r)^{q},t_{0}+\theta(\sigma r)^{q})\subset Q_{1}^{-}(1),
		\end{equation*}
		where $\theta:=cu(x_{0},t_{0})^{2-q}$.
	\end{theorem}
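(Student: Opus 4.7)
The plan is to combine Lemma~\ref{lem:time covering} with a spatial chaining argument to simultaneously tighten both the space and time room requirements. Lemma~\ref{lem:time covering} already permits the time factor $\sigma_t$ to be brought arbitrarily close to $1$, at the price of enlarging the spatial room constant $\sigma_x$ and shrinking the infimum ball to $B_{\alpha r}(x_0)$; the remaining task is therefore to recover the full infimum ball $B_r(x_0)$ while keeping the spatial room requirement at most $\sigma r$.

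First, I would fix a parameter $\sigma_t>1$ (to be chosen later in terms of $\sigma$) and invoke Lemma~\ref{lem:time covering} to obtain constants $\alpha_0,\sigma_{x,0},\mu_0,c_0$ such that
\[
u(z,\tau)\leq \mu_0\inf_{B_{\alpha_0\rho}(z)}u\bigl(\cdot,\tau+c_0 u(z,\tau)^{2-q}\rho^q\bigr)
\]
holds whenever the cylinder $B_{\sigma_{x,0}\rho}(z)\times(\tau-c_0 u(z,\tau)^{2-q}(\sigma_t\rho)^q,\tau+c_0 u(z,\tau)^{2-q}(\sigma_t\rho)^q)$ lies inside $Q_1^-(1)$.

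Next, given a target point $y\in B_r(x_0)$, I would form the straight chain $z_0=x_0,z_1,\dots,z_N=y$ with $z_i=x_0+\tfrac{i}{N}(y-x_0)$, where $N=N(\sigma,n,p,q)$ is sufficiently large, and apply the displayed estimate inductively at points $(z_i,t_i)$. As in the proof of Lemma~\ref{lem:time covering}, either the iteration halts prematurely at some step, in which case continuity already yields the desired bound, or at each step the solution is reduced by the factor $\mu_0$ and time advances by $c_0 u(z_i,t_i)^{2-q}\rho_i^q$ with $u(z_i,t_i)=\mu_0^{-i}u(x_0,t_0)$. The natural choice $\rho_i=\rho_0\mu_0^{i(2-q)/q}$ makes every time increment equal to $c_0 u(x_0,t_0)^{2-q}\rho_0^q$, and fixing $\rho_0$ by the relation $Nc_0\rho_0^q=c r^q$ makes the accumulated waiting time exactly $c u(x_0,t_0)^{2-q}r^q$ for a constant $c=c(n,p,q,\sigma)$. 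The compatibility requirement $|z_{i+1}-z_i|\leq\alpha_0\rho_i$ at every step reduces to a single lower bound on $\min_i\rho_i$, which can be arranged by making $N$ large.

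The main obstacle is verifying that the cylinder required by each application of the lemma fits inside $Q_1^-(1)$. Spatially, the worst excursion at step $i$ is bounded by $|z_i-x_0|+\sigma_{x,0}\rho_i\leq r+\sigma_{x,0}\max_i\rho_i$, while temporally it is controlled by the cumulative waiting time, which, due to the $\sigma_t$-cushion built into each step, is bounded by $\sigma_t^q$ times the net advance $t_N-t_0$. Choosing $\sigma_t$ sufficiently close to $1$ absorbs the temporal cushion into the factor $\sigma$, and choosing $\rho_0$ small enough (equivalently $N$ large enough) in terms of $\sigma$, $\sigma_{x,0}$ and $\alpha_0$ brings the spatial excursion below $\sigma r$. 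Both choices force $\mu=\mu_0^N$ to blow up as $\sigma\to 1$, consistent with the behavior anticipated by the remark preceding the theorem.
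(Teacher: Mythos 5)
Your proposal takes the same starting point as the paper (pass first to Lemma~\ref{lem:time covering} to shrink the time room, then chain spatially) but diverges in a design choice that turns out to be fatal: you use \emph{variable} radii $\rho_i=\rho_0\mu_0^{i(2-q)/q}$ to force every time increment to be equal, whereas the paper fixes a single radius $\rho=\varrho r$ with $\varrho=(\sigma-1)/\sigma_x$ and lets the time increments form a geometric series.

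The gap is concrete. With $\rho_i=\rho_0\mu_0^{i(2-q)/q}$, the ratio $\max_i\rho_i/\min_i\rho_i=\mu_0^{(N-1)|2-q|/q}$ grows exponentially in $N$ whenever $q\neq 2$. Your chain-step condition $|z_{i+1}-z_i|=r/N\leq\alpha_0\rho_i$ forces $\min_i\rho_i\geq r/(N\alpha_0)$, while the spatial room condition $|z_i-x_0|+\sigma_{x,0}\rho_i\leq\sigma r$ (with $|z_i-x_0|\leq r$ in the worst case) forces $\max_i\rho_i\leq(\sigma-1)r/\sigma_{x,0}$. Combined these require
\begin{equation}
\frac{\mu_0^{(N-1)|2-q|/q}}{N}\leq\frac{\alpha_0(\sigma-1)}{\sigma_{x,0}},
\end{equation}
whose left side has a strictly positive minimum over $N\in\mathbb N$ and grows exponentially in $N$, while the right side tends to $0$ as $\sigma\downarrow 1$ (and only gets worse if you push $\sigma_t\downarrow 1$ in Lemma~\ref{lem:time covering}, since then $\sigma_{x,0}\to\infty$). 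Thus the claim "choosing $\rho_0$ small enough (equivalently $N$ large enough) brings the spatial excursion below $\sigma r$" is backwards: for $q<2$ the largest $\rho_i$ is $\rho_{N-1}$, which blows up exponentially relative to $\rho_0$, and for $q>2$ the largest is $\rho_0$, which must then be exponentially larger than the smallest one $\rho_{N-1}$ that has to cover the step. Either way the two constraints are incompatible for $N$ large or $\sigma$ close to $1$.

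The paper avoids this entirely by freezing the radius: with $\rho=\frac{\sigma-1}{\sigma_x}r$, the spatial excursion is exactly $r+\sigma_x\rho=\sigma r$ by construction and needs no fine-tuning, the number of iterations is bounded a priori by $\lceil(\alpha\varrho)^{-1}\rceil$, and the cumulative waiting time becomes a convergent geometric sum $\tilde c\,u(x_0,t_0)^{2-q}\rho^q\sum_k\tilde\mu^{(q-2)(k-1)}$. The only delicate point is then bounding the largest single time jump $J$ (which is the first one for $q<2$ and the last one for $q>2$), and that is handled directly. If you want to retain a scheme with equal time increments, you would need the ratio $\max_i\rho_i/\min_i\rho_i$ to remain bounded uniformly in $N$ — but that is incompatible with the defining relation $\rho_i\propto\mu_0^{i(2-q)/q}$ unless $q=2$.
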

	\begin{proof}
		Let $\tilde{c},\sigma_{x}, \alpha$ and $\tilde \mu$ be the constants that we get from Lemma \ref{lem:time covering} for $\sigma_{t}:=\sigma$.
		Then we have
		\begin{equation}
		u(x,t)\leq\tilde{\mu}\inf_{B_{\alpha l}(x)}u(\cdot,t+\tilde{c}u(x,t)^{2-q}l^{q})\label{eq:initial harnack}
		\end{equation}
		whenever
		\begin{equation*}
		(x,t)+B_{\sigma_{x}l}(x)\times(t-\tilde{c}u(x,t)^{2-q}(\sigma l)^{q},t+\tilde{c}u(x,t)^{2-q}(\sigma l)^{q})\subset Q_{1}^{-}(1),
		\end{equation*}
		i.e.
		\begin{align*}
		\left|x\right|+\sigma_{x}l<1,\\
		t\pm\tilde{c}u(x,t)^{2-q}(\sigma l)^{q}\subset[0,1).
		\end{align*}
		We denote
		\[
		\varrho:=\frac{\sigma-1}{\sigma_{x}}
		\]
		and
		\[
		c:=\tilde{c}\varrho^{q}\sum_{k=1}^{\left\lceil {(\alpha\varrho)}^{-1}\right\rceil + 1}\tilde{\mu}^{(q-2)(k-1)}.
		\]
		Let $(\hat{x},\hat{t})$ be the target point, that is, $\hat{x}\in B_{r}(x_{0})$
		and 
		\[
		\hat{t}:=t_{0}+c u(x_{0},t_{0})^{2-q}r^{q}.
		\]
		We now proceed by iteration (see Figure \ref{fig:harnack_chain_spaceq<2} below).
		\begin{figure}[h]\label{fig:space_iteration}
			\begin{tikzpicture}[scale=0.8]
			\filldraw[black] (0, 0) circle (1pt); %(x_0, t_0)
			\filldraw[black] (2, 2+2) circle (1pt);
			\filldraw[black] (3.5, 2+2) circle (1pt); %(x_1, t_1)
			\filldraw[black] (5.5, 3+3) circle (1pt);
			\filldraw[black] (7, 5+3) circle (1pt); %(x_2, t_2)
			\filldraw[black] (7, 6+3) circle (1pt);
			\draw[black, line width = 0.25mm] plot[smooth,domain=0:2] (\x, {1 * \x*\x});
			\draw[lightgray, line width = 0.25mm] plot[smooth,domain=-2:0] (\x, {1 * \x*\x});
			
			\draw[black, dotted, line width = 0.25mm] (2, 2+2) -- (3.5, 2+2);
			\draw[black, line width = 0.25mm] plot[smooth,domain=3.5:5.5] (\x, {0.5 * (\x-3.5)*(\x-3.5) + 2+2});
			\draw[lightgray, line width = 0.25mm] plot[smooth,domain=1.5:3.5] (\x, {0.5 * (\x-3.5)*(\x-3.5) + 2+2});
			
			\draw[black, dotted, line width = 0.25mm] (5.5, 3+3) -- (7, 3+3);
			\draw[black, dotted, line width = 0.25mm] (7, 3+3) -- (7, 5+3);
			\draw[black, line width = 0.25mm] plot[smooth,domain=7:9] (\x, {0.25 * (\x-7)*(\x-7) + 5+3});
			\draw[lightgray, line width = 0.25mm] plot[smooth,domain=5:7] (\x, {0.25 * (\x-7)*(\x-7) + 5+3});
			
			\node at (0, 0.5) {$(x_0, t_0)$};
			\node at (2, 2.5+2) {$(x_1^\ast, t_1^\ast)$};
			\node at (3.5, 2.5+2) {$(x_1, t_1)$};
			\node at (5.5, 3.5+3) {$(x_2^\ast, t_2^\ast)$};
			\node at (7, 5.5+3) {$(x_2, t_2)$};
			\node at (7, 6.5+3) {$(\hat x, \hat t)$};

			\draw[black, line width = 0.25mm] (4, -0.25) -- (4, 0);
			\node at (4, -0.5) {$x_0 + \sigma_x \rho$};

			\draw[black, line width = 0.25mm] (11, -0.25) -- (11, 0);
			\node at (11, -0.5) {$\hat x + \sigma_x \rho$};
			
			\draw[gray, line width = 0.25mm] (0, 0) -- (1.5*8, 0);
			\draw[black, line width = 0.25mm] (1.5*8, 0.25) -- (1.5*8, 0);
			\node at (1.5*8, 0.5) {$x_0 + \sigma r$};
			
			%\draw[black, line width = 0.25mm] (2*7, 0.25) -- (2*7, 0);

			%Tässä \sigma_x = 2, \sigma = 1.5, r=0.8, dist(x, \hat_x) = 0.7
			
			\end{tikzpicture}
			\caption{Illustration of the Harnack chain in the proof of Theorem 
				\ref{thm:harnack_general_form} when $q<2$. If $q>2$, the paraboloids get steeper instead.}
			\label{fig:harnack_chain_spaceq<2}
		\end{figure}
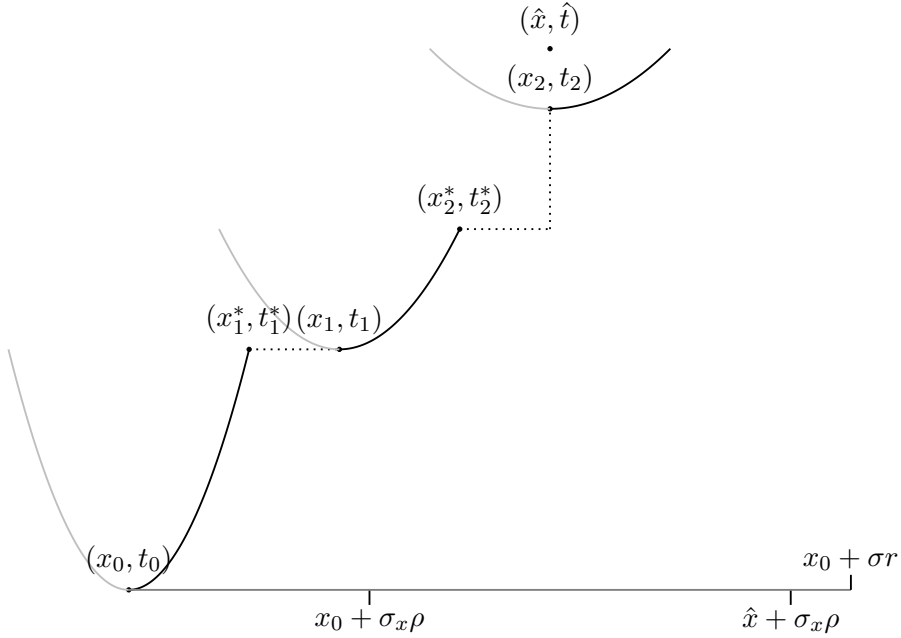
		
		\textbf{Initial step:} Let $\rho:=\varrho r$ and set 
		\[
		t_{1}^{\ast}:=t_{0}+\tilde{c}u(x_{0},t_{0})^{2-q}\rho{}^{q}.
		\]
		By the Harnack's inequality in (\ref{eq:initial harnack}) we have 
		\[
		u(x_{0},t_{0})\leq\tilde{\mu}u(x_{1}^{\ast},t_{1}^{\ast}),
		\]
		where $x_{1}^{\ast}$ is the point in $\overline{B}_{\alpha\rho}(x_{0})$
		that is closest to $\hat{x}$. Now, if $u(x_{0},t_{0})<\tilde{\mu}u(x_{1}^{\ast},t_{1}^{\ast})$,
		then we move along the path $\gamma_{(x_{1}^{\ast},t_{1}^{\ast})}^{(\hat{x},\hat{t})}$
		until we find a point $(x_{1},t_{1})$ such that 
		\[
		u(x_{0},t_{0})=\tilde{\mu}u(x_{1},t_{1}).
		\]
		If no such point exists, then by continuity we must have
		\[
		u(x_{0},t_{0})\leq\tilde{\mu}u(\hat{x},\hat{t}),
		\]
		and we end the iteration.
		
		\textbf{Iteration step:} Let $i\in\left\{ 2,\ldots\right\} $ and
		suppose that we have already chosen $(x_{i-1},t_{i-1})$ such that
		\[
		u(x_{0},t_{0})=\tilde{\mu}^{i-1}u(x_{i-1},t_{i-1}).
		\]
		Set 
		\[
		t_{i}^{\ast}:=t_{i-1}+\tilde{c}u(x_{i-1},t_{i-1})^{2-q}\rho^{q}.
		\]
		By Harnack's inequality in (\ref{eq:initial harnack}) we have
		\[
		u(x_{i-1},t_{i-1})\leq\tilde{\mu}u(x_{i}^{\ast},t_{i}^{\ast}),
		\]
		where $x_{i}^{\ast}$ is the point in $\overline{B}_{\rho}(x_{i-1})$
		that is closest to $\hat{x}$. Now, if $u(x_{i-1},t_{i-1})<\tilde{\mu}u(x_{i}^{\ast},t_{i}^{\ast})$,
		then we move along the path $\gamma_{(x_{i}^{\ast},t_{i}^{\ast})}^{(\hat{x},\hat{t})}$
		until we find a point $(x_{i},t_{i})$ such that $u(x_{i-1},t_{i-1})=\tilde{\mu}u(x_{i},t_{i})$
		so that
		\[
		u(x_{0},t_{0})=\tilde{\mu}^{i-1}u(x_{i-1},t_{i-1})=\tilde{\mu}^{i}u(x_{i},t_{i}).
		\]
		If no such point exists, then by continuity we must have
		\[
		u(x_{0},t_{0})=\tilde{\mu}^{i-1}u(x_{i-1},t_{i-1})\leq\tilde{\mu}^{i}u(\hat{x},\hat{t}),
		\]
		and we end the iteration.
		
		If the iteration does not end prematurely, we continue until $t_{i}^{\ast}\geq\hat{t}$.
		When that happens, we apply the Harnack's inequality \eqref{eq:initial harnack}  one more time with a radius smaller or equal to $\rho$ so that we hit $\hat{t}$. We define $i_{\hat t}$ as the smallest natural number such that $t^\ast_{i_{\hat{t}}+1} \geq \hat{t}$. That is, $t_{\hat{i}}$ is the time from which it remains to apply the Harnack's inequality \eqref{eq:initial harnack} one more time to reach the target time $\hat{t}$. Next, we show that our selections of the constants ensure the finiteness of  $i_{\hat t}$ and that $x_{i_{\hat t}} = \hat x $. For finiteness, we observe that
		\begin{equation*}
		i_{\hat{t}} \leq \lceil (\alpha \varrho)^{-1} \rceil .
		\end{equation*}
		Indeed, otherwise
		\begin{align*}
		t_{i_{\hat{t}}}^{\ast}\geq t_{\left\lceil (\varrho\alpha)^{-1}\right\rceil +1}^{\ast} & \geq t_{0}+\sum_{k=1}^{\left\lceil (\varrho\alpha)^{-1}\right\rceil +1}\tilde{c}u(x_{k-1},t_{k-1})^{2-q}\rho^{q}\\
		& =t_{0}+\sum_{k=1}^{\left\lceil (\varrho\alpha)^{-1}\right\rceil +1}\tilde{c}\frac{1}{\tilde{\mu}^{(k-1)(2-q)}}u(x_{0},t_{0})^{2-q}\rho^{q}\\
		& =t_{0}+\tilde{c}u(x_{0},t_{0})^{2-q}\rho^{q}\sum_{k=1}^{\left\lceil (\varrho\alpha)^{-1}\right\rceil +1}\tilde{\mu}^{(q-2)(k-1)}\\
		& =t_{0}+cu(x_{0},t_{0})^{2-q}r^{q}\\
		& =\hat{t}, 
		\end{align*}
		which would be against the definition of $i_{\hat t}$.
		Next we estimate the smallest $i_{\hat{x}}\in\left\{ 1,\ldots\right\} $
		such that $x_{i_{\hat{x}}}=\hat{x}$ (observe that the construction
		ensures that $x_{i}=\hat{x}$ for all $i\geq i_{\hat{x}}$). At each
		iteration step, unless we have already reached $\hat{x},$ we move
		at least $\alpha \rho$ closer towards $\hat{x}$. Since $\left|x_{0}-\hat{x}\right|\leq r$, we thus have
		\[
		i_{\hat x} \leq \left\lceil \frac{r}{\alpha\rho} \right\rceil = \left\lceil \frac{r}{\alpha \varrho r} \right\rceil = \left\lceil (\alpha \varrho)^{-1} \right\rceil.
		\]
		We want to show that $i_{\hat x} \leq i_{\hat t}$, as this implies that $x_{i_{\hat t}} = \hat x$. For this end, we may assume that the Harnack chain does not skip in time direction using the paths $\gamma_{(x_i, t_i)}^{(\hat x, \hat t)}$, as otherwise the chain automatically reaches $\hat x$. Using this we conclude that $i_{\hat x} \leq i_{\hat t}$ must hold since otherwise
		\begin{align*}
		t_{i_{\hat t} + 1}^\ast\leq t_{i_{\hat x}}^\ast & \leq t_{\lceil (\alpha \varrho)^{-1} \rceil}^\ast \\
		& = t_0 + \tilde c u(x_0,t_0)^{2-q}\rho^q \sum_{k=1}^{\lceil(\rho \alpha)^{-1} \rceil} {\tilde \mu}^{(q-2)(k-1)} \\
		& < t_0 + \tilde c u(x_0,t_0)^{2-q}\rho^q \sum_{k=1}^{\lceil(\rho \alpha)^{-1} \rceil + 1} {\tilde \mu}^{(q-2)(k-1)} \\
		& = \hat t,
		\end{align*}
		which is against the definition of $i_{\hat t}$. Thus the procedure reaches $\hat{x}$ before we apply Harnack's inequality one last time. This yields the estimate
		\[
		u(x_{0},t_{0})\leq\tilde{\mu}^{\left\lceil {(\alpha \varrho)}^{-1}\right\rceil +1}u(\hat{x},\hat{t}).
		\]
		
		We still need to check that we have room to use Harnack's inequality.
		The room in space is clear from the definition of $\rho$ since
		\[
		\left|\hat{x}-x_{0}\right|+\sigma_{x}\rho=r+\sigma_{x}\frac{\sigma-1}{\sigma_{x}}r\leq r+(\sigma-1)r=\sigma r<1.
		\]
		For the room in time, observe that we always end the Harnack at most the time level $\hat{t}$. Therefore, the worst-case scenario would be if our biggest possible jump in Harnack's inequality ended up at $\hat{t}$. Since the sequence $u(x_{i},t_{i})$ is decreasing, the biggest possible jump is 
		\[
		J:=\begin{cases}
		\tilde{c}u(x_{0},t_{0})^{2-q}\rho^{q}, & \text{if }q<2,\\
		\tilde{c}u(x_{0},t_{\left\lceil (\alpha\varrho)^{-1}\right\rceil})^{2-q}\rho^{q}, & \text{if }q>2.
		\end{cases}
		\]
		To land on $\hat{t}$, the jump would have to start from $\hat{t}-J$. Thus it suffices to ensure that
		\[
		(\hat{t}-J)+J\sigma^{q}=\hat{t}+(\sigma^{q}-1)J<1.
		\]
		This holds, since if $q < 2$, we have
		\begin{align*}
		\hat{t}+(\sigma^{q}-1)J & =\hat{t}+(\sigma^{q}-1)\tilde{c}u(x_{0},t_{0})^{2-q}\rho^{q}\\
		& =t_{0}+cu(x_{0},t_{0})^{2-q}r^{q}+(\sigma^{q}-1)cu(x_{0},t_{0})^{2-q}(\frac{\tilde{c}\varrho^{q}}{c})\\
		& \leq t_{0}+cu(x_{0},t_{0})^{2-q}(\sigma\rho)^{q}<1,
		\end{align*}
		and if $q>2$, we have 
		\begin{align*}
		\hat{t}+(\sigma^{q}-1)J & =\hat{t}+\tilde{c}u(x_{0},t_{k})^{2-q}\rho^{q}(\sigma^{q}-1)\\
		& =t_{0}+cu(x_{0},t_{0})^{2-q}r^{q}+cu(x_{0},t_{0})^{2-q}r^{q}(\sigma^{q}-1)\left(\frac{\tilde{c}\varrho^{q}\mu^{(q-2)(\left\lceil (\alpha\varrho)^{-1}\right\rceil)}}{c}\right)\\
		& \leq t_{0}+cu(x_{0},t_{0})^{2-q}(\sigma r)^{q}<1. \qedhere
		\end{align*}
	\end{proof}
	\section{Optimality of the range of exponents}
	\label{sec:range}
	Intrinsic Harnack's inequality may fail outside of the range condition \eqref{eq:range} as for such exponents, viscosity solutions of \eqref{eq:rgnppar} vanish in finite time as we will prove in this section. The solutions of the standard $p$-parabolic equation in the corresponding subcritical exponent range behave in a similar way. Idea, behind the proof is to use the equivalence result proven by Parviainen and Vázquez \cite{Parviainen2020} to transfer the problem onto a one-dimensional divergence form equation and then to prove that a solution to this equation vanishes. We use the weak formulation for a time-mollified solution with a suitable test function after first proving that this formulation holds for all weak solutions as the separate lemma. Next, we simplify both sides of the formulation, estimate using Sobolev's inequality and ultimately get a vanishing upper bound for the norm of the solution. We do this first in bounded domains and then prove the global result using convergence and stability results. This global result Proposition \ref{prop:finiteextspace} gives us a counterexample to the intrinsic Harnack's inequality \ref{thm:gbackharnack} and thus proves that range \eqref{eq:range} is optimal.
	
	As proven by Parviainen and Vázquez, radial viscosity solutions to \eqref{eq:rgnppar} are equivalent to weak solutions of the one-dimensional equation
	\begin{equation}
	\partial_{t}u-\frac{p-1}{q-1}\Delta_{q,d}u=0\quad\text{in }(-R,R)\times(0,T).\label{eq:radial eq}
	\end{equation}
	Here, denoting by $u'$ the radial derivative of $u$,
	\[
	\Delta_{q,d}u:=\left|u'\right|^{q-2}\left((q-1)u''+\frac{d-1}{r}u'\right) 
	\]
	is heuristically the usual radial $q$-Laplacian in a fictitious dimension
	\begin{equation*}
	d:=\frac{(n-1)(q-1)}{p-1}+1.
	\end{equation*} If $d$ happens to be an integer, then solutions
	to \eqref{eq:radial eq} are equivalent to radial weak solutions of
	the $q$-parabolic equation in $B_R\times(0,T)\subset\mathbb{R}^{d+1}$ by \cite[Section 3]{Parviainen2020}. If $d\not\in\N$, we still have an equivalence between radial viscosity solutions of \eqref{eq:rgnppar} and continuous weak solutions of \eqref{eq:radial eq} as proven in \cite[Theorem 4.2]{Parviainen2020}. 
	
	A weak solution of \eqref{eq:radial eq} is in a weighted Sobolev space but we are only interested in continuous solutions and thus will assume this in the following definition. The description of the exact definition in the elliptic case is in \cite[Definition 2.2]{Siltakoski2021}. The following definition is written in a slightly different form but is equivalent to the definition given by Parviainen and Vazquez \cite[Definition 4.1]{Parviainen2020}. We use the notation $\ud z:=r^{d-1}\ud r\ud t$ for the natural parabolic measure for this problem and denote the distributional derivative of $v$ by $v'$ and define it by
	\begin{equation*}
	\int_{0}^{R}v'\vp\ud r=-\int_{0}^{R}v\vp'\ud r
	\end{equation*}
	for all $\vp\in C_0^\infty((0,R))$ so it coincides with standard derivative for differentiable functions.
		\begin{definition}
			\label{def:1dimn}
			Let $0<T\leq\infty$ and $0<R\leq\infty$. A function $u\in C\left((-R,R)\times(0,T)\right)$ such that $u'\in C\left((-R,R)\times(0,T)\right)$ and $u'(0,t)=0$ is a continuous weak solution to \eqref{eq:radial eq} if we have
			\begin{equation*}
			\int_{t_1}^{t_2}\int_{0}^{R}u\partial_{t}\phi-\frac{p-1}{q-1}\left(\abs{u'}^{q-2}u'\right)\phi'\ud z=0	
			\end{equation*}
			for all $0 < t_1 < t_2 < T$ and $\phi\in C^{\infty}_0((-R,R)\times(0,T))$.
		\end{definition}
		We define time-mollification and prove a basic result for it in Lemma \ref{le:molli} below for the convenience of the reader. Let $\eps>0$ and $\eta_\eps : \mathbb {R} \rightarrow [0, \infty)$ be the standard mollifier such that $\supp \eta_\varepsilon \subset (-\varepsilon, \varepsilon)$. The time-mollification of $u \in L^1((0,R)\times(0,T))$ is defined by
		\begin{equation}
		\label{eq:mollifier}
		u_\eps(r,t):=\eta_\eps*u(r,t)=\int_{0}^{T}\eta_\eps(t-s)u(r,s) \ud s.
		\end{equation}
		\begin{lemma}
			\label{le:molli}
			Let $u$ be a continuous weak solution to \eqref{eq:radial eq} and let $u_\eps$ denote the mollification \eqref{eq:mollifier}. Then for all $0 < t_1 < t_2 < T$ and $\phi\in C^\infty((0,R)\times(0,T))$ such that $\supp\phi(\cdot,t)\Subset(-R,R)$ for any $t\in(t_1,t_2)$, we have
			\begin{equation*}
			\int_{t_1}^{t_2}\int_{0}^{R}u_\eps\partial_{t}\phi-\frac{p-1}{q-1}\left(\abs{u'}^{q-2}u'\right)_\eps\phi'\ud z=\int_{0}^{R}\left(u_\eps(r,t_2)\phi(r,t_2)-u_\eps(r,t_1)\phi(r,t_1)\right)r^{d-1}\ud r	
			\end{equation*}
		\end{lemma}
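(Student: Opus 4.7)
The plan is to test the weak formulation against a cleverly chosen time mollification of $\phi$ and let the mollification of $u$ emerge from Fubini. Fix $0<t_1<t_2<T$ and choose $\varepsilon$ small enough that $(t_1-\varepsilon,t_2+\varepsilon)\Subset(0,T)$ and that the spatial supports of $\phi(\cdot,\tau)$ for $\tau\in[t_1,t_2]$ are contained in a common compact subset of $(-R,R)$. Then define the test function
\begin{equation*}
\Psi(r,s):=\int_{t_1}^{t_2}\eta_\varepsilon(\tau-s)\phi(r,\tau)\,d\tau,
\end{equation*}
which is smooth with compact support in $(-R,R)\times(0,T)$ and hence admissible in Definition \ref{def:1dimn}. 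The first step is to verify these admissibility properties; this is routine.

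Next, plug $\Psi$ into the weak formulation of $u$ and rewrite the time derivative by integrating by parts in the $\tau$ variable. Using that $\partial_s\eta_\varepsilon(\tau-s)=-\partial_\tau\eta_\varepsilon(\tau-s)$ (recall $\eta_\varepsilon$ is an even standard mollifier), we obtain
\begin{equation*}
\partial_s\Psi(r,s)=\eta_\varepsilon(t_1-s)\phi(r,t_1)-\eta_\varepsilon(t_2-s)\phi(r,t_2)+\int_{t_1}^{t_2}\eta_\varepsilon(\tau-s)\partial_\tau\phi(r,\tau)\,d\tau.
\end{equation*}
For the spatial term, no integration by parts is needed: we simply differentiate inside the integral to get $\Psi'(r,s)=\int_{t_1}^{t_2}\eta_\varepsilon(\tau-s)\phi'(r,\tau)\,d\tau$.

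The third step is to apply Fubini's theorem to swap the $s$- and $\tau$-integrals. In the two boundary terms, the inner integral over $s$ collapses via the definition \eqref{eq:mollifier} to $u_\varepsilon(r,t_i)$, producing exactly the right-hand side of the claim. In the remaining interior term and in the spatial term, the inner $s$-integrations produce $u_\varepsilon(r,\tau)$ and $(|u'|^{q-2}u')_\varepsilon(r,\tau)$ respectively, giving the integrand on the left-hand side of the claim. Collecting everything yields the asserted identity.

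The only real subtlety is checking that $\Psi$ is a legitimate test function, in particular that its time-support lies strictly inside $(0,T)$; this is guaranteed once $\varepsilon$ is chosen small enough, so the restriction $\varepsilon<\min(t_1,T-t_2)$ must appear at the start of the argument. The spatial boundary condition $u'(0,t)=0$ plays no role here; it is only needed to justify the weak formulation in the first place. Everything else is Fubini and a one-variable integration by parts, so no delicate analytic estimate is required.
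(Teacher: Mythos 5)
Your proof is correct, and it takes a genuinely different route from the paper's. The paper argues in two stages: it first establishes the mollified identity
\begin{equation*}
\int_{0}^{T}\int_{0}^{R}u_{\varepsilon}\partial_{t}\varphi-\tfrac{p-1}{q-1}(\left|u^{\prime}\right|^{q-2}u^{\prime})_{\varepsilon}\varphi^{\prime}\,\ud z=0
\end{equation*}
for test functions $\varphi$ compactly supported in time, and then obtains the boundary terms at $t_1,t_2$ by inserting a piecewise-linear time cutoff $\xi_h$, passing to the Lipschitz test function $\varphi_h=\xi_h\phi$, and sending $h\to0$. You collapse both stages into one by choosing the single smooth test function $\Psi(r,s)=\int_{t_1}^{t_2}\eta_\varepsilon(\tau-s)\phi(r,\tau)\,d\tau$, letting the $\tau$-integration by parts produce the two boundary terms directly, and using one Fubini swap to turn the inner $s$-integrals into the mollified quantities $u_\varepsilon(r,\tau)$, $u_\varepsilon(r,t_i)$ and $(|u'|^{q-2}u')_\varepsilon(r,\tau)$ matching \eqref{eq:mollifier}. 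Your version dispenses with the Lipschitz approximation and the $h\to0$ limit, so it is somewhat more self-contained; the paper's version isolates a reusable intermediate identity, which can be convenient if one wants to test against other cutoffs. Two small remarks: the relation $\partial_s\eta_\varepsilon(\tau-s)=-\partial_\tau\eta_\varepsilon(\tau-s)$ is pure chain rule and does not use evenness of $\eta_\varepsilon$ (evenness is what the paper uses in its Fubini step to identify $\int_0^T u(r,t)\eta_\varepsilon(t-s)\,dt$ with $u_\varepsilon(r,s)$; your Fubini already lands on the definition verbatim, so you do not need it at all); and the uniform compact spatial support of $\phi(\cdot,\tau)$ over $\tau\in[t_1,t_2]$ is a property of $\phi$, not something achieved by shrinking $\varepsilon$ — it should simply be stated as being read off from the hypothesis on $\phi$, an implicit assumption present in the paper's proof as well.
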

		\begin{proof}
			Let first $\varepsilon>0$ and $\varphi\in C_{0}^{\infty}((-R,R)\times(0,T))$
			be such that $\supp\varphi\Subset(-R,R)\times(\varepsilon,T-\varepsilon)$.
			Because $\eta_{\varepsilon}$ is even, we have by partial integration
			(the boundary terms vanish since $\varphi(0,\cdot)\equiv\varphi(T,\cdot)\equiv0$)
			\begin{align*}
			\partial_{t}\varphi_{\varepsilon}(t,r)=\partial_{t}\int_{0}^{T}\eta_{\varepsilon}(t-s)\varphi(r,s)\ud s & =\int_{0}^{T}-\partial_{t}\eta_{\varepsilon}(t-s)\varphi(r,s)\ud s\\
			& =\int_{0}^{T}\eta_{\varepsilon}(t-s)\partial_{s}\varphi(r,s)\ud s.
			\end{align*}
			Thus by Fubini's theorem
			\begin{align*}
			\int_{0}^{T}\int_{0}^{R}u(r,t)\partial_{t}\varphi_{\varepsilon}(r,t)r^{d-1}\ud r\ud t & =\int_{0}^{T}\int_{0}^{R}\int_{0}^{T}u(r,t)\eta_{\varepsilon}(t-s)\partial_{s}\varphi(r,s)r^{d-1}\ud s\ud r\ud t\\
			& =\int_{0}^{T}\int_{0}^{R}\int_{0}^{T}u(r,t)\eta_{\varepsilon}(t-s)\ud t\,\partial_{s}\varphi(r,s)r^{d-1}\ud r\ud s\\
			& =\int_{0}^{T}\int_{0}^{R}u_{\varepsilon}(r,s)\partial_{s}\varphi(r,s)r^{d-1}\ud r\ud s.
			\end{align*}
			Similarly, for the space derivative we have
			\begin{align*}
			& \int_{0}^{T}\int_{0}^{R}\left|u^{\prime}(r,t)\right|^{q-2}u^{\prime}(r,t)\varphi_{\varepsilon}^{\prime}(r,t)r^{d-1}\ud r\ud t\\
			& \ \ =\int_{0}^{T}\int_{0}^{R}\int_{0}^{T}\eta_{\varepsilon}(t-s)\partial_{r}\varphi(r,s)\left|u^{\prime}(r,t)\right|^{q-2}u^{\prime}(r,t)\ud sr^{d-1}\ud r\ud t\\
			& \ \ =\int_{0}^{T}\int_{0}^{R}\int_{0}^{T}\eta_{\varepsilon}(t-s)\left|u^{\prime}(r,t)\right|^{q-2}u^{\prime}(r,t)\ud t\partial_{r}\varphi(r,s)r^{d-1}\ud r\ud s\\
			& \ \ =\int_{0}^{T}\int_{0}^{R}(\left|u^{\prime}\right|^{q-2}u^{\prime})_{\varepsilon}(r,s)\varphi^{\prime}(r,s)r^{d-1}\ud r\ud s.
			\end{align*}
			By the last two displays, we obtain
			\begin{equation}\label{eq:lala}
			\int_{0}^{T}\int_{0}^{R}u_{\varepsilon}\partial_{t}\varphi-\frac{p-1}{q-1}(\left|u^{\prime}\right|^{q-2}u^{\prime})_{\varepsilon}\varphi^{\prime}\ud z=0.
			\end{equation}
			Let now $0<t_{1}<t_{2}<T$ and $\phi\in C^{\infty}((-R,R)\times(0,T))$
			be such that $\phi(\cdot,t)\Subset(-R,R)$ for any $t\in(t_{1},t_{2})$.
			Define the cut-off function
			\[
			\xi_{h}(t):=\begin{cases}
			0, & t\in(0,t_{1}-h)\\
			\frac{1}{h}(t-t_{1}), & t\in[t_{1}-h,t_{1}),\\
			1, & t\in[t_{1},t_{2}),\\
			1-\frac{1}{h}(t-t_{2}), & t\in[t_{2},t_{2}+h),\\
			0, & t\in[t_{2}+h,T).
			\end{cases}
			\]
			Since $\varphi_{h}:=\xi_{h}\phi$ is Lipschitz, it satisfies \eqref{eq:lala}
			by the first part of the proof and a simple approximation argument. Since by
			continuity all $t_{1},t_{2}\in(0,T)$
			satisfy
			\[
			\int_{0}^{T}\int_{0}^{R}u_{\varepsilon}\partial_{t}\varphi_{h}\ud z\rightarrow\int_{t_{1}}^{t_{2}}\int_{0}^{R}u_{\varepsilon}\partial_{t}\phi\ud z+\int_{0}^{R}(u_{\varepsilon}(r,t_{2})\phi(r,t_{2})-u_{\varepsilon}(r,t_{1})\phi(r,t_{1}))r^{d-1}\ud r
			\]
			as $h\rightarrow0$, the claim of the lemma follows. 
		\end{proof}
		Our proof of finite extinction uses the following Sobolev's inequality, which is heuristically speaking the Gagliardo-Nirenberg inequality for radial functions in the fictitious dimension $d$. The standard formulation of the Gagliardo-Nirenberg inequality requires $q<n$ and hence does not work for our one-dimensional case.
	\begin{theorem}[Radial Sobolev's inequality]
		\label{thm:sobolev}
		Suppose that $1\leq q<d$. Let $v\in C^{\infty}(0,\infty)\cap C[0,\infty)$
		be such that $v(r)\equiv0$ for all large $r>0$. Then there exists $C=C(d,q)$ such that
		\[
		\left(\int_{0}^{\infty}\left|v(r)\right|^{\frac{dq}{d-q}}r^{d-1}\ud r\right)^{\frac{d-q}{dq}}\leq C\left(\int_{0}^{\infty}\left|v'(r)\right|^{q}r^{d-1}\ud r\right)^{\frac{1}{q}}.
		\]
	\end{theorem}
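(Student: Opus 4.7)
The approach I would take is the classical Gagliardo--Nirenberg--Sobolev reduction from general $q \geq 1$ to the case $q = 1$. First I would reduce to $v \geq 0$: the left-hand side of the inequality depends only on $|v|$, and since $\bigl||v|'\bigr| \leq |v'|$ almost everywhere, a standard mollification argument lets me work with smooth nonnegative $v$. Granted the $q=1$ version of the inequality, I would apply it to $w := v^\gamma$ with $\gamma := (d-1)q/(d-q) \geq 1$, chosen so that $w^{d/(d-1)} = v^{dq/(d-q)}$. H\"older's inequality with exponents $q$ and $q/(q-1)$ applied to $\int|w'|\,r^{d-1}\,dr = \gamma\int v^{\gamma-1}|v'|\,r^{d-1}\,dr$, together with the identity $(\gamma - 1)q/(q-1) = dq/(d-q)$, yields
\begin{equation*}
\left(\int_0^\infty v^{dq/(d-q)} r^{d-1}\,dr\right)^{(d-1)/d} \leq C\gamma \left(\int_0^\infty v^{dq/(d-q)}r^{d-1}\,dr\right)^{(q-1)/q} \left(\int_0^\infty |v'|^q r^{d-1}\,dr\right)^{1/q}.
\end{equation*}
Collecting the powers of $\int v^{dq/(d-q)} r^{d-1}\,dr$ via $\tfrac{d-1}{d} - \tfrac{q-1}{q} = \tfrac{d-q}{dq}$ then produces the claim.

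For the base case $q = 1$, my plan is to combine Cavalieri's principle, an elementary monotonicity argument, the weighted coarea formula, and a fictitious-dimensional isoperimetric inequality. Setting $f(t) := \int_{\{v > t\}} r^{d-1}\,dr$, Cavalieri gives $\int_0^\infty v^{d/(d-1)} r^{d-1}\,dr = \tfrac{d}{d-1}\int_0^\infty t^{1/(d-1)} f(t)\,dt$. Since $g := f^{(d-1)/d}$ is nonincreasing, the pointwise bound $g(t) \leq t^{-1}\int_0^t g(s)\,ds$ implies $t^{1/(d-1)} g(t)^{d/(d-1)} \leq g(t)\, G(t)^{1/(d-1)}$ where $G(t) := \int_0^t g$. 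Integrating, and recognizing the right-hand side as $\tfrac{d-1}{d}(G^{d/(d-1)})'$, yields $\int_0^\infty t^{1/(d-1)} g^{d/(d-1)}\,dt \leq \tfrac{d-1}{d}(\int g)^{d/(d-1)}$, which after raising to the $(d-1)/d$ power gives $\bigl(\int v^{d/(d-1)} r^{d-1}\,dr\bigr)^{(d-1)/d} \leq \int_0^\infty f(t)^{(d-1)/d}\,dt$.

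The last ingredient is the isoperimetric inequality $f(t)^{(d-1)/d} \leq d^{-(d-1)/d} P_{r^{d-1}}(\{v > t\})$, where $P_{r^{d-1}}(E)$ denotes the sum of $r^{d-1}$ over the boundary points of $E$. Writing the open set $\{v > t\}$ as a countable union $[0, b_0)\cup\bigcup_{j \geq 1}(a_j, b_j)$ (with the first interval possibly empty), the identity $d\,f(t) = b_0^d + \sum_j(b_j^d - a_j^d)$ combined with the subadditivity of $x \mapsto x^{(d-1)/d}$ (valid since $d > 1$) and the estimate $(b_j^d - a_j^d)^{(d-1)/d} \leq b_j^{d-1} \leq a_j^{d-1} + b_j^{d-1}$ produces the required bound. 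The weighted coarea formula $\int_0^\infty P_{r^{d-1}}(\{v > t\})\,dt = \int_0^\infty |v'|\,r^{d-1}\,dr$ then completes the argument. The main delicate point is precisely this fictitious-dimensional isoperimetric step, since $d$ need not be an integer and no ambient Euclidean Sobolev inequality is available; fortunately, the elementary subadditivity of $x \mapsto x^{(d-1)/d}$ provides a clean substitute.
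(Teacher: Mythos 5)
Your reduction from $q>1$ to $q=1$ — apply the $q=1$ estimate to $w=v^\gamma$ with $\gamma=(d-1)q/(d-q)$, then H\"older — is exactly the reduction the paper uses, down to the same exponent. Where you genuinely diverge is in the $q=1$ base case. The paper proves it by a short direct computation: integrate $\int g\,r^{d-1}\,dr$ by parts with $g=|v|^{d/(d-1)}$, substitute $|v(r)|=|\int_r^\infty v'|$, and use the pointwise bound $(r/s)^{d-1}\leq 1$ for $s\geq r$ to absorb the extra factor of $r^{d-1}$ into the inner integral; this closes in a few lines. You instead run the classical rearrangement route: Cavalieri on the distribution function $f(t)$, a monotonicity trick to pass to $\int f^{(d-1)/d}$, a hand-built weighted isoperimetric inequality on $[0,\infty)$ via subadditivity of $x\mapsto x^{(d-1)/d}$, and the one-dimensional weighted coarea formula. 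Your argument is correct — I checked the identity $d\,f(t)=b_0^d+\sum_j(b_j^d-a_j^d)$, the subadditivity estimate, and the exponent bookkeeping — but it is considerably longer, and it implicitly invokes Sard's theorem so that for a.e.\ $t$ the level set $\{v=t\}$ is finite and equals $\partial\{v>t\}$, which you should state since you rely on the decomposition being a \emph{finite} union for the perimeter identity and coarea to be unambiguous. What your route buys is conceptual transparency about the ``fictitious-dimensional'' isoperimetric structure; what the paper's route buys is economy, needing only integration by parts and the trivial bound $(r/s)^{d-1}\leq1$, with no layer-cake, no coarea, and no Sard.
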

	
	\begin{proof}
		Suppose first that $q=1$. We denote
		\[
		g(r):=\left|v(r)\right|^{\frac{d}{d-1}}=\left|\int_{r}^{\infty}v^{\prime}(s)\ud s\right|^{\frac{d}{d-1}}.
		\]
		Since $d/(d-1)>1$, we have $g\in C^{1}(\mathbb{R})$ and 
		\begin{align*}
		g^{\prime}(r)= & \frac{d}{d-1}\left|v(r)\right|^{\frac{1}{d-1}}v^{\prime}(r)\sgn(v(r))\\
		= & \frac{d}{d-1}\left|\int_{r}^{\infty}v^{\prime}(s)\ud s\right|^{\frac{1}{d-1}}v^{\prime}(r)\sgn(v(r)).
		\end{align*}
		Integrating by parts and using that $g(r)=0$ for large $r$, we obtain
		\begin{align*}
		\int_{0}^{\infty}\left|v(r)\right|^{\frac{d}{d-1}}r^{d-1}\ud r & =\lim_{k\rightarrow\infty}\int_{0}^{k}g(r)r^{d-1}\ud r\\
		& =\lim_{k\rightarrow\infty}\left(g(r)\frac{r^{d}}{d}\Big|_{r=0}^{r=k}-\int_{0}^{k}g^{\prime}(r)\frac{r^{d}}{d}\right)\\
		& =\frac{1}{d}\int_{0}^{\infty}g^{\prime}(r)r^{d}\ud r\\
		& =\frac{1}{d-1}\int_{0}^{\infty}\left|\int_{r}^{\infty}v^{\prime}(s)\ud s\right|^{\frac{1}{d-1}}v^{\prime}(r)\sgn(v(r))r^{d}\ud r\\
		& \leq\frac{1}{d-1}\int_{0}^{\infty}\left(\int_{r}^{\infty}\left|v^{\prime}(s)\right|\ud s\right)^{\frac{1}{d-1}}r\cdot\left|v^{\prime}(r)\right|r^{d-1}\ud r.
		\end{align*}
		This we can further estimate as
		\begin{align*}									\frac{1}{d-1}\int_{0}^{\infty}&\left(\int_{r}^{\infty}\left|v^{\prime}(s)\right|s^{d-1}\underset{\leq1}{\underbrace{\frac{r^{d-1}}{s^{d-1}}}}\ud s\right)^{\frac{1}{d-1}}\left|v^{\prime}(r)\right|r^{d-1}\ud r\\&\leq\frac{1}{d-1}\int_{0}^{\infty}\left(\int_{0}^{\infty}\left|v^{\prime}(s)\right|s^{d-1}\ud s\right)^{\frac{1}{d-1}}\left|v^{\prime}(r)\right|r^{d-1}\ud r\\
		& =\frac{1}{d-1}\left(\int_{0}^{\infty}\left|v^{\prime}(s)\right|s^{d-1}\ud s\right)^{\frac{1}{d-1}}\int_{0}^{\infty}\left|v^{\prime}(r)\right|r^{d-1}\ud r\\
		& =\frac{1}{d-1}\left(\int_{0}^{\infty}\left|v^{\prime}(r)\right|r^{d-1}\ud r\right)^{\frac{d}{d-1}}
		\end{align*}
		so that
		\begin{equation}
		\label{eq:radialsobolev1}
		\left(\int_{0}^{\infty}\left|v(r)\right|^{\frac{d}{d-1}}r^{d-1}\ud r\right)^{\frac{d-1}{d}}\leq C\int_{0}^{\infty}\left|v^{\prime}(r)\right|r^{d-1}\ud r.
		\end{equation}
		Suppose then that $1<q<d$. Using \eqref{eq:radialsobolev1} with
		$v:=u^{\frac{dq-q}{d-q}},$ we obtain
		\begin{align*}
		\left(\int_{0}^{\infty}\left|u(r)\right|^{\frac{dq}{d-q}}r^{d-1}\ud r\right)^{\frac{d-1}{d}} & \leq C\int_{0}^{\infty}\left|u^{\prime}(r)\right|\left|u(r)\right|^{\frac{d(q-1)}{d-q}}r^{d-1}\ud r\\
		& \leq C\left(\int_{0}^{\infty}\left|u^{\prime}(r)\right|^{q}r^{d-1}\ud r\right)^{\frac{1}{q}}\left(\int_{0}^{\infty}\left|u(r)\right|^{\frac{dq}{d-q}}r^{d-1}\ud r\right)^{\frac{q-1}{q}},
		\end{align*}
		which implies the desired inequality.
	\end{proof}

	Now we have the needed tools to state and prove the finite extinction of solutions. We do this by first proving the result for solutions of a Dirichlet problem in simple cylinders and then expanding this result to the entire space by convergence results. The existence of global solutions with extinction in finite time is a counterexample for the intrinsic Harnack's inequality as we show at the end of this section.
	The proof uses the following notation for the weighted Lebesgue norm
	\begin{equation}
	\label{eq:Lqnorm}
	\norm{v}_{L^q(r^{d-1},(0,R))}:=\left(\int_{0}^{R}\abs{v}^qr^{d-1}\ud r\right)^{\frac{1}{q}}.
	\end{equation}
	
	We only consider radially symmetric initial data in what follows. The finite extinction holds in the general situation by comparison principle.
	
	\begin{proposition}
		\label{prop:finiteextball}
		Assume $q$ does not satisfy the range condition \eqref{eq:range} and let $R>0$. Let $u$ be a viscosity solution of
		\begin{equation}
		\label{eq:dirichlet}
		\begin{cases}
		\partial_t u=\abs{\nabla u}^{q-p}\div\left(\abs{\nabla u}^{p-2}\nabla u\right) & \text{ in }B_R\times(0,T),\\
		u(\cdot,0)=u_0(\cdot)\geq0& \text{ where }u_0\in L^\infty(B_R)\cap C(B_R) \text{ is radial},\\
		u(\cdot,t)=0 &\text{ on }\partial B_R \text{ for any }t\in(0,T).
		\end{cases}
		\end{equation}
		There exists a finite time $T^*:=T^*(n,p,q,u_0)$, such that
		\begin{equation*}
		u(\cdot,t)\equiv 0 \quad \text{ for all }t\geq T^*
		\end{equation*}
		and
		\begin{equation*}
		0<T^*\leq
		C\norm{u_0}_{L^s(r^{d-1},(0,R))}^{2-q}
		\end{equation*}
		where $C:=C(n,p,q)$ and $s=\frac{d(2-q)}{q}$.
	\end{proposition}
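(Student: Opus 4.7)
The plan is to exploit the Parviainen--V\'azquez equivalence to recast the problem as a one-dimensional divergence-form equation in the fictitious dimension $d:=\frac{(n-1)(q-1)}{p-1}+1$, and then run an $L^{s}$-energy argument of the type used for the singular $p$-Laplace evolution. By the comparison principle (Theorem \ref{thm:comp}) and radial symmetry of the data, the viscosity solution $u$ stays radial and non-negative, so by \cite[Theorem 4.2]{Parviainen2020} its radial profile is a continuous weak solution of \eqref{eq:radial eq} in the sense of Definition \ref{def:1dimn}. Failure of \eqref{eq:range} forces $1<p<(n+1)/2$ and $q\leq 2(n-p)/(n-1)$; a direct computation then yields $q<2$, $q<d$, and that the exponent $s:=d(2-q)/q$ satisfies $s\geq 1$ (with equality exactly at the endpoint $q=2(n-p)/(n-1)$).

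Next I would derive the formal energy identity
\begin{equation*}
\frac{1}{s}\frac{d}{dt}\int_{0}^{R}u^{s}\,r^{d-1}\,dr+(s-1)\frac{p-1}{q-1}\int_{0}^{R}u^{s-2}|u'|^{q}\,r^{d-1}\,dr=0
\end{equation*}
by applying Lemma \ref{le:molli} to a test function of the form $\phi_{\delta,\varepsilon}=\zeta(r)\bigl((u_{\varepsilon}+\delta)^{s-1}-\delta^{s-1}\bigr)$, where $\zeta$ is a spatial cutoff approximating $\chi_{(-R,R)}$ (needed since Lemma \ref{le:molli} only allows $\supp\phi(\cdot,t)\Subset(-R,R)$), and then letting $\varepsilon\to 0$, $\delta\to 0$, and $\zeta\nearrow\chi_{(-R,R)}$. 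The zero Dirichlet condition kills the boundary contributions.

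The third step is to activate the Sobolev inequality. Setting $v:=u^{(s+q-2)/q}$, the choice of $s$ is tailored to give the two algebraic identities
\begin{equation*}
\int_{0}^{R}v^{\frac{dq}{d-q}}r^{d-1}\,dr=\int_{0}^{R}u^{s}r^{d-1}\,dr,\qquad \int_{0}^{R}|v'|^{q}r^{d-1}\,dr=\Bigl(\tfrac{s+q-2}{q}\Bigr)^{q}\!\int_{0}^{R}u^{s-2}|u'|^{q}r^{d-1}\,dr.
\end{equation*}
Extending $v$ by zero outside $[0,R]$ (legitimate because $u$ vanishes on $\partial B_{R}$) and applying Theorem \ref{thm:sobolev}, raising to the power $q$, yields
\begin{equation*}
\Bigl(\int_{0}^{R}u^{s}r^{d-1}\,dr\Bigr)^{(d-q)/d}\leq C(n,p,q)\int_{0}^{R}u^{s-2}|u'|^{q}r^{d-1}\,dr.
\end{equation*}
Combined with the energy identity, for $f(t):=\int_{0}^{R}u(r,t)^{s}r^{d-1}\,dr$ this gives $f'(t)\leq -c\,f(t)^{(d-q)/d}$ with $c=c(n,p,q)>0$. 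Since $(d-q)/d<1$, the substitution $g:=f^{q/d}$ produces $g'(t)\leq -cq/d$, so $f(t)^{q/d}\leq f(0)^{q/d}-(cq/d)\,t$, forcing extinction at some
\begin{equation*}
T^{*}\leq\frac{d}{cq}\,\|u_{0}\|_{L^{s}(r^{d-1},(0,R))}^{sq/d}=C(n,p,q)\,\|u_{0}\|_{L^{s}(r^{d-1},(0,R))}^{2-q},
\end{equation*}
using $sq/d=2-q$.

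The main technical obstacle is the rigorous justification of the energy identity: $u^{s-1}$ fails to be smooth both at $u=0$ (when $s<2$, which is possible) and at $r=\pm R$, so one must mollify in time via Lemma \ref{le:molli}, regularise the power by working with $(u_{\varepsilon}+\delta)^{s-1}-\delta^{s-1}$, and carefully pass to the limit in the nonlinear flux term $|u'|^{q-2}u'$ while removing the spatial cutoff. The borderline case $s=1$ needs an extra dominated-convergence argument but the inequality $f'\leq -cf^{(d-q)/d}$ still persists.
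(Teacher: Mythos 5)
Your proposal follows the same route as the paper: transfer via the Parviainen--V\'azquez equivalence to the one-dimensional weak formulation, test with a regularized power $((u_\varepsilon+\delta)^{s-1}-\delta^{s-1})$, apply Lemma \ref{le:molli}, estimate the flux term via the radial Sobolev inequality of Theorem \ref{thm:sobolev}, and close with the ODE $f'\leq -c\,f^{(d-q)/d}$ for $f(t)=\int_0^R u^s r^{d-1}\,dr$. The substitution $g=f^{q/d}$ you use is algebraically identical to the paper's passage to the distributional inequality for $v^{2-q}(t)$ with $v=\|u(\cdot,t)\|_{L^s(r^{d-1},(0,R))}$, since $v^{2-q}=f^{(2-q)/s}=f^{q/d}$. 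Your addition of a spatial cutoff $\zeta$ approximating $\chi_{(-R,R)}$ is a sensible way to reconcile with the compact-support requirement in Lemma \ref{le:molli}; the paper does not display such a cutoff explicitly, so your version is arguably slightly more careful on this point.

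One caveat: the borderline case $s=1$, i.e.~$q=\frac{2(n-p)}{n-1}$, is not actually covered by the stated argument. When $s=1$ the coefficient $(s-1)$ in front of $\int u^{s-2}|u'|^q\,r^{d-1}\,dr$ vanishes, so the Sobolev step produces nothing, and the claim that ``$f'\leq -c\,f^{(d-q)/d}$ still persists'' after a dominated-convergence argument does not follow from what you wrote. To be fair, the paper has the same gap (it asserts $s>1$ from $q\leq \frac{2d}{d+1}$, which only gives $s\geq 1$), so this is not something you did worse; but if you want the proposition to genuinely cover the endpoint, you should either restrict to $q<\frac{2(n-p)}{n-1}$ or supply a separate argument (e.g.~via a logarithmic test function) at $s=1$.
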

	\begin{proof}
		The existence of a solution $u \in C( \overbar B_R\times[0,T])$ to the Cauchy-Dirichlet problem \eqref{eq:dirichlet} can be proven for example by modified Perron's method (see \cite[Theorem 2.6]{Parviainen2020}) and the comparison principle ensures that it is radial.  Therefore, by the equivalence result \cite[Theorem 4.2]{Parviainen2020}, $u$ is a continuous weak solution to
		\begin{equation}
		\label{eq:onedim}
		\begin{cases}
		\partial_{t}u-\frac{p-1}{q-1}\left|u'\right|^{q-2}\left((q-1)u''+\frac{d-1}{r}u'\right) =0&\text{ in }(-R,R)\times(0,T),\\
		u(\cdot,0)=u_0(\cdot)\geq0&\text{ where }u_0\in L^\infty((-R,R)),\\
		u(-R,t)=u(R,t)=0 & \text{ for any }t\in(0,T).
		\end{cases}
		\end{equation}
		Let \begin{equation*}
		s=\frac{d(2-q)}{q}
		\end{equation*}
		and notice that $s>1$ because we assumed $q<\frac{2d}{d+1}$. We define the test function
		$ \varphi:=u_{\varepsilon,h}^{s-1}-h^{s-1}$, where $u_{\varepsilon,h}:=u_{\varepsilon}+h$ for  $\varepsilon,h>0$ and $u_{\varepsilon}$ denotes the time-mollification. We add this $h$ to ensure that our function remains strictly positive as we have negative exponents during the calculation.
		Then $\varphi$ is an admissible test function and by Lemma \ref{le:molli} we have
		\begin{align} 
		\int_{t_{1}}^{t_{2}}\int_{0}^{R}u_{\varepsilon}\partial_{t}\varphi\ud z-\int_{0}^{R}(u_{\varepsilon}\varphi(r,t_{2})-u_{\varepsilon}\varphi(r,t_{1}))r^{d-1}\ud r  & =  \frac{p-1}{q-1}\int_{t_{1}}^{t_{2}}\int_{0}^{R}(\left|u^{\prime}\right|^{q-2}u^{\prime})_{\varepsilon}\partial_{r}(u_{\varepsilon,h}^{s-1})\ud z\\
		& =:A_{\varepsilon,h} 	\label{eq:aaa} 
		\end{align}
		for all $0 < t_1 < t_2 < T$.
		We rewrite the first term on the left-hand side using integration
		by parts and Fubini's theorem
		\begin{align*}
		\int_{t_{1}}^{t_{2}}\int_{0}^{R}u_{\varepsilon}\partial_{t}\varphi\ud z & =\int_{t_{1}}^{t_{2}}\int_{0}^{R}u_{\varepsilon}\partial_{t}u_{\varepsilon,h}^{s-1}\ud z\\
		& =-\int_{t_{1}}^{t_{2}}\int_{0}^{R}u_{\varepsilon,h}^{s-1}\partial_{t}u_{\varepsilon}\ud z+\int_{0}^{R}(u_{\varepsilon}u_{\varepsilon,h}^{s-1}(r,t_{2})-u_{\varepsilon}u_{\varepsilon,h}^{s-1}(r,t_{1}))r^{d-1}\ud r\\
		& =-\int_{t_{1}}^{t_{2}}\int_{0}^{R}\frac{1}{s}\partial_{t}u_{\varepsilon,h}^{s}\ud z+\int_{0}^{R}(u_{\varepsilon}u_{\varepsilon,h}^{s-1}(r,t_{2})-u_{\varepsilon}u_{\varepsilon,h}^{s-1}(r,t_{1}))r^{d-1}\ud r\\
		& =-\frac{1}{s}\int_{t_{1}}^{t_{2}}\partial_{t}\left\Vert u_{\varepsilon,h}\right\Vert _{L^{s}(r^{d-1},(0,R))}^{s}\ud t+\int_{0}^{R}(u_{\varepsilon}u_{\varepsilon,h}^{s-1}(r,t_{2})-u_{\varepsilon}u_{\varepsilon,h}^{s-1}(r,t_{1}))r^{d-1}\ud r.
		\end{align*}
		Hence, since $u_{\varepsilon,h}^{s-1}-\varphi=h^{s-1}$, the equation \eqref{eq:aaa} becomes
		\[
		\frac{1}{s}\int_{0}^{R}(u_{\varepsilon,h}^{s}(r,t_{1})-u_{\varepsilon,h}^{s}(r,t_{2}))r^{d-1}\ud r -  h^{s-1}\int_{0}^{R}(u_{\varepsilon}(r,t_{2})-u_{\varepsilon}(r,t_{1}))r^{d-1}\ud r =A_{\varepsilon,h}.
		\]
		Since we eliminated the time derivative, we may let $\varepsilon\rightarrow0$
		to obtain
		\begin{equation}
		\label{eq:lalala2}
		\frac{1}{s}\int_{0}\left(u_{h}^{s}(r,t_{1})-u_{h}^{s}(r,t_{2})\right)r^{d-1}\ud r  - h^{s-1}\int_{0}^{R}(u(r,t_{2})-u(r,t_{1}))r^{d-1}\ud r =A_{h}.
		\end{equation}
		Next, we rewrite $A_{h}$ as follows
		\begin{align*}
		A_{h}=\frac{p-1}{q-1}\int_{t_{1}}^{t_{2}}\int_{0}^{R}\left|u^{\prime}\right|^{q-2}u^{\prime}\partial_{r}(u_{h}^{s-1})\ud z & =\frac{p-1}{q-1}\int_{t_{1}}^{t_{2}}\int_{0}^{R}\left|u_{h}^{\prime}\right|^{q-2}u_{h}^{\prime}\partial_{r}(u_{h}^{s-1})\ud z\\
		& =\frac{(p-1)(s-1)}{q-1}\int_{t_{1}}^{t_{2}}\int_{0}^{R}\left|u_{h}^{\prime}\right|^{q}u_{h}^{s-2}\ud z,
		\end{align*}
		where by Sobolev's inequality in Theorem \ref{thm:sobolev} forcing vanishing boundary values
		\begin{align*}
		\int_{t_{1}}^{t_{2}}\int_{0}^{R}\left|u_{h}^{\prime}\right|^{q}u_{h}^{s-2}\ud z & =\int_{t_{1}}^{t_{2}}\int_{0}^{R}\left|u^{\prime}u_{h}^{\frac{s-2}{q}}\right|^{q}\ud z\\
		& =\left(\frac{q}{s+q-2}\right)^{q}\int_{t_{1}}^{t_{2}}\int_{0}^{R}\left|\partial_{r}(u_{h}^{\frac{s+q-2}{q}}(r,t)-u_{h}^{\frac{s+q-2}{q}}(R,t))\right|^{q}\ud z\\
		& \geq C_{1}\left(\frac{q}{s+q-2}\right)^{q}\int_{t_{1}}^{t_{2}}\left(\int_{0}^{R}\left|u_{h}^{\frac{s+q-2}{q}}(r,t)-u_{h}^{\frac{s+q-2}{q}}(R,t)\right|^{\frac{dq}{d-q}}\ud z\right)^{\frac{d-q}{d}}.
		\end{align*}
		Here $C_{1}$ is the constant in Sobolev's inequality. Since $s=\frac{d(2-q)}{q}$, we have by the last two displays
		\begin{align*}
		\liminf_{h\rightarrow0}A_{h} & \geq C_{1}\frac{(p-1)(s-1)}{q-1}\left(\frac{q}{s+q-2}\right)^{q}\int_{t_{1}}^{t_{2}}\left(\int_{0}^{R}\left|u^{\frac{s+q-2}{q}}\right|^{\frac{dq}{d-q}}\ud z\right)^{\frac{d-q}{d}}\\
		& =:C_{2}\int_{t_{1}}^{t_{2}}\left(\int_{0}^{R}\left|u\right|^{s}\ud z\right)^{\frac{d-q}{d}}.
		\end{align*}
		Consequently, letting $h\rightarrow0$ in \eqref{eq:lalala2}, we obtain
		\[
		\frac{1}{s}\int_{0}^{R}(u^{s}(r,t_{1})-u^{s}(r,t_{2}))r^{d-1}\ud r\geq C_{2}\int_{t_{1}}^{t_{2}}\left(\int_{0}^{R}\left|u\right|^{s}\ud z\right)^{\frac{d-q}{d}}.
		\]
		Denoting $v(t):=\left\Vert u(\cdot,t)\right\Vert _{L^{s}(r^{d-1},(0,R))}$
		and multiplying the inequality by $-s$, we have
		\begin{equation}
		v^s(t_{2})-v^s(t_{1})\leq-C_{2}s\int_{t_{1}}^{t_{2}}v(t)^{s\frac{d-q}{d}}\ud t.\label{eq:lalala3}
		\end{equation}
		Observe that this implies in particular that $v$ is decreasing. Next,
		we derive a distributional inequality which implies that $v$ must
		in fact vanish for large times. For this end, let $\kappa:=q-2+s$
		and observe that for any $0<a<b$ we have 
		\begin{align*}
		a^{2-q}-b^{2-q}=-\frac{1}{b^{\kappa}}\int_{a}^{b}(2-q)b^{\kappa}t^{1-q}\ud t & \leq-\frac{1}{b^{\kappa}}\int_{a}^{b}(2-q)t^{\kappa}t^{1-q}\ud t\\
		& =-\frac{1}{b^{\kappa}}\int_{a}^{b}(2-q)t^{s-1}\ud t\\
		& =\frac{2-q}{s}\frac{1}{b^{\kappa}}(a^{s}-b^{s}).
		\end{align*}
		Let then $\varphi\in C_{0}^{\infty}(0,T)$ be non-negative. Next,
		we apply the integration by parts formula for difference quotients and
		the fact that $v$ is decreasing together with the above elementary
		inequality. This way, we obtain by dominated convergence theorem
		\begin{align*}
		 -\int_{0}^{T}v^{2-q}(t)\varphi^{\prime}(t)\ud t
		&=-\lim_{\delta\rightarrow0}\int_{0}^{T}v^{2-q}(t)\frac{\varphi(t-\delta)-\varphi(t)}{-\delta}\ud t\\
		&  =\lim_{\delta\rightarrow0}\int_{0}^{T}\varphi(t)\frac{v^{2-q}(t+\delta)-v^{2-q}(t)}{\delta}\ud t\\
		&  \leq\lim_{\delta\rightarrow0}\int_{0}^{T}\varphi(t)\frac{1}{v^{\kappa}(t+\delta)}\frac{2-q}{s}\frac{v^{s}(t+\delta)-v^{s}(t)}{\delta}\ud t
		\end{align*}
		Here we can use the estimate \eqref{eq:lalala3}
		\begin{align*}
		(2-q)&\lim_{\delta\rightarrow0}\int_{0}^{T}\varphi(t)\frac{1}{v^{\kappa}(t+\delta)}\frac{1}{s}\frac{v^{s}(t+\delta)-v^{s}(t)}{\delta}\ud t\\&\leq-(2-q)C_{2}\lim_{\delta\rightarrow0}\int_{0}^{T}\varphi(t)\frac{1}{v^{\kappa}(t+\delta)}\frac{1}{\delta}\int_{t}^{t+\delta}v(l)^{s\frac{d-q}{d}}\ud l\\
		&  =-(2-q)C_{2}\int_{0}^{T}\varphi(t)\frac{v(t)^{s\frac{d-q}{d}}}{v^{\kappa}(t)}\ud t\\
		&  =-(2-q)C_{2}\int_{0}^{T}\varphi(t)\ud t,
		\end{align*}
		where the last two identities follow from continuity and the computation
		\[
		s\frac{d-q}{d}-\kappa=s\frac{d-q}{d}+2-q-s=\frac{(2-q)(d(d-q)+dq-d^{2})}{dq}=0.
		\]
		Hence we have established the distributional inequality
		\[
		\int_{0}^{T}-v^{2-q}(t)\varphi^{\prime}(t)+(2-q)C_{2}\varphi(t)\ud t\leq0\quad\text{for all non-negative }\varphi\in C_{0}^{\infty}(0,T).
		\]
		Since $v$ is continuous up to the boundary, this yields
		\[
		v^{2-q}(t)-v^{2-q}(0)+(2-q)C_{2}t\le0\quad\text{for all }t\in[0,T],
		\]
		which is, recalling $v(t)=\left\Vert u(\cdot,t)\right\Vert _{L^{s}(r^{d-1},(0,R))}$, equivalent with
		\begin{align*}
		\norm{u(\cdot,t)}_{L^s(r^{d-1},(0,R))}
		&\leq\norm{u_0(\cdot)}_{L^s(r^{d-1},(0,R))}\left(1-(2-q)C_2\norm{u_0(\cdot)}_{L^s(r^{d-1},(0,R))}^{q-2}t\right)^{\frac{1}{2-q}}.
		\end{align*}
		Thus as long as the original $T>0$ is large enough, $u$ vanishes for time $T^*$ satisfying
		\begin{equation*}
		0<T^*\leq C\norm{u_0(\cdot)}_{L^s(r^{d-1},(0,R))}^{2-q} \text{ for }C=((2-q)C_2)^{-1}.\qedhere
		\end{equation*}
	\end{proof}
	Next, we expand this local result to a global result.
	\begin{proposition}
		\label{prop:finiteextspace}
		Assume $q$ does not satisfy the range condition \eqref{eq:range}. Let $u$ be a viscosity solution of
		\begin{equation}
		\label{eq:dirichletspace}
		\begin{cases}
		\partial_t u=\abs{\nabla u}^{q-p}\div\left(\abs{\nabla u}^{p-2}\nabla u\right) & \text{ in }\Rn\times\R^+\\
		u(\cdot,0)=u_0(\cdot)\geq0&\text{ where radial }
		u_0\in C_0(B_R) \text{ for some }R>0.
		\end{cases}
		\end{equation}
		There exists a finite time $T^*:=T^*(n,p,q,u_0)$, such that
		\begin{equation*}
		u(\cdot,0)\equiv 0 \quad \text{ for all }t\geq T^*
		\end{equation*}
		and 
		\begin{equation}
		\label{eq:finiteextspace1}
		0<T^*\leq
		C\norm{u_0}_{L^s(r^{d-1},(0,R))}^{2-q}
		\end{equation}
		where $C:=C(n,p,q)$ and $s=\frac{d(2-q)}{q}$.
	\end{proposition}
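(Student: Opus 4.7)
The plan is to deduce the global extinction from the ball-wise result of Proposition \ref{prop:finiteextball} by a monotone Dirichlet approximation on expanding balls, exploiting that the extinction bound there depends only on the weighted $L^s$-norm of the initial datum and a dimensional constant, both of which are insensitive to enlarging the domain once it contains $B_R$.

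Concretely, take $R_k \nearrow \infty$ with $R_k \geq R$, and let $u_k$ denote the viscosity solution of the Cauchy-Dirichlet problem \eqref{eq:dirichlet} on $B_{R_k}\times(0,\infty)$ with initial datum $u_0$ (still supported in $B_R\subset B_{R_k}$) and zero lateral data. Since $\supp u_0 \subset B_R$, we have
\[
\|u_0\|_{L^s(r^{d-1},(0,R_k))}=\|u_0\|_{L^s(r^{d-1},(0,R))}
\]
for every $k$, so Proposition \ref{prop:finiteextball} provides a common extinction time $T^{\ast}:=C(n,p,q)\|u_0\|_{L^s(r^{d-1},(0,R))}^{2-q}$ such that $u_k(\cdot,t)\equiv 0$ in $B_{R_k}$ for all $t\geq T^{\ast}$ and all $k$.

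Next I would invoke the comparison principle in Theorem \ref{thm:comp} on $B_{R_k}\times(0,\infty)$. Because $u_k(\cdot,0)=u_0=u(\cdot,0)$ and $u_k=0\leq u$ on $\partial B_{R_k}\times(0,\infty)$ by non-negativity of $u$, the same reasoning used against $u_{k+1}$ instead of $u$ gives the monotone chain $u_k\leq u_{k+1}\leq u$. The pointwise limit $v:=\lim_k u_k$ therefore exists, satisfies $v\leq u$, and inherits $v(\cdot,t)\equiv 0$ for $t\geq T^{\ast}$ from the uniform extinction of the $u_k$.

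The remaining, and main, obstacle is to identify $v$ with $u$. The direct way is the half-relaxed limit machinery: the sequence $u_k$ is locally uniformly bounded (by the maximum principle $\|u_k\|_\infty\leq\|u_0\|_\infty$), hence its monotone limit $v$ is a non-negative viscosity solution of \eqref{eq:rgnppar} in $\mathbb{R}^n\times\mathbb{R}^+$ with initial data $u_0$. Since $u_0$ is radial, both $u$ and $v$ are radial, and by the equivalence result \cite[Theorem 4.2]{Parviainen2020} they correspond to continuous weak solutions of the one-dimensional problem \eqref{eq:radial eq} on $(0,\infty)\times(0,T)$ sharing the same initial datum. Uniqueness in that one-dimensional weak-solution class then forces $v=u$, whence $u(\cdot,t)\equiv 0$ for $t\geq T^{\ast}$ with the explicit bound \eqref{eq:finiteextspace1}. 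If a direct uniqueness statement is unavailable, one can alternatively dispense with it by applying the energy computation of Proposition \ref{prop:finiteextball} directly to $u$ after multiplying the test function $u_{\varepsilon,h}^{s-1}-h^{s-1}$ by a radial cut-off $\chi_{R'}$ and letting $R'\to\infty$, using the boundedness of $\|u(\cdot,t)\|_{L^s(r^{d-1},\mathbb{R}^+)}$ (which can be propagated from $t=0$ via the same monotone Dirichlet approximation) to kill the cut-off error terms in the limit.
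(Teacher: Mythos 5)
Your overall strategy matches the paper's: approximate by Dirichlet problems on expanding balls, note that the extinction bound from Proposition \ref{prop:finiteextball} depends only on the weighted $L^s$-norm of $u_0$ (which is insensitive to enlarging the ball once it contains $\supp u_0$), use comparison to get a monotone bounded sequence, and then identify the limit with $u$. Both your approach and the paper's put the weight on the last identification step, and that is where your proposal is under-specified.

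Two concrete gaps. First, you assert that the monotone limit $v$ of the $u_k$ ``is a non-negative viscosity solution'' and attribute this to ``half-relaxed limit machinery,'' but you do not say how the stability of the Ohnuma--Sato notion of viscosity solution (with its restricted class of admissible test functions) survives a mere pointwise monotone limit. The paper avoids the half-relaxed route entirely: it first establishes local uniform convergence, by combining the interior H{\"o}lder estimates of \cite{Imbert2019} (uniform in $i$ since $\norm{u_i}_{L^\infty}\le\norm{u_0}_{L^\infty}$ by comparison) with Arzel\`a--Ascoli, and then invokes the stability result tailored to this equation, \cite[Theorem 6.1, Proposition 6.2]{Ohnuma1997}. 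This is the clean substitute for your appeal to generic half-relaxed limits and should be spelled out. Second, your identification of $v$ with $u$ proceeds by transferring both to the one-dimensional radial equation via \cite[Theorem 4.2]{Parviainen2020} and then invoking ``uniqueness in that one-dimensional weak-solution class''; such a uniqueness theorem for the unbounded 1D Cauchy problem with the weight $r^{d-1}$ is not among the cited results and is not automatic. The paper side-steps this by quoting the direct uniqueness result \cite[Corollary 4.10]{Ohnuma1997} for the full $n$-dimensional Cauchy problem, so that no detour through the fictitious-dimension reformulation is needed. Your fallback suggestion (running the energy estimate of Proposition \ref{prop:finiteextball} on $u$ itself with a space cut-off) is a genuinely different argument, but as stated it leaves the control of the cut-off error terms open and would require a separate a priori bound on $\norm{u(\cdot,t)}_{L^s(r^{d-1},\R^+)}$; the paper does not pursue this route.
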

	\begin{proof}
		Let $u_i$ be the radial viscosity solution to the bounded problem \eqref{eq:dirichlet} for $R=i\in\N$. Now by Proposition \ref{prop:finiteextball} there exists a finite time $T_i^*$ satisfying
		\begin{equation*}
			0<T_i^*\leq C\norm{u_0(\cdot)}_{L^s(r^{d-1},(0,i))}^{2-q},
		\end{equation*}
		such that $u_i(\cdot,t)\equiv0$ for $t\geq T_i^*$.
		By the comparison principle \ref{thm:comp} we have $u_{i+1}\geq u_i$ in $B_{i}\times(0,i)$ which implies that $T_{i+1}^*\geq T_{i}^*$ and because we assumed that $u_0$ has compact support this sequence of extinction times has a limit $T_{\lceil R \rceil}^*$.
		
		Using the Hölder estimates proven in \cite{Imbert2019}, we have that each $u_i$ is Hölder continuous in both variables and the Hölder constant only depends on $n,p,q$ and $\norm{u_i}_{L^\infty (B_{i}\times(0,i))}$. By the comparison principle these $L^\infty$-norms are bounded from above by $\norm{u_0}_{L^\infty(\Rn\times\R^+)}$ and thus the sequence $(u_i)_{i=1}^\infty$ is uniformly equicontinuous. By construction, $u_i\to u$ converges pointwise as $i\to\infty$ passing to a subsequence if necessary and because of the equicontinuity, the Arzelà-Ascoli theorem ensures that the convergence is uniform.

		For any compact subset $A\subset\Rn\times\R^+$, $u_i$ is a viscosity solution to \eqref{eq:rgnppar} in $A$ for $i$ large enough and thus $u$ is also a viscosity solution in this set by stability result proven by Ohnuma and Sato \cite[Theorem 6.1, Proposition 6.2]{Ohnuma1997}. Because $A$ is arbitrary, $u$ is a viscosity solution in the entire space and by construction, it has the correct initial value. This solution is unique as proven by \cite[Corollary 4.10]{Ohnuma1997} and this proves that $u$ vanishes after finite time $T_{\lceil R \rceil}^*$ satisfying \eqref{eq:finiteextspace1}.
	\end{proof}
	Now we have the tools needed to show that intrinsic Harnack's inequality does not hold for $q$ not satisfying the range condition \eqref{eq:range}. Let $u$ be a viscosity solution to \eqref{eq:dirichletspace} and $T^*$ the finite extinction time given by Proposition \ref{prop:finiteextspace}. Choose $(x_0,t_0)\in\R^n\times(0,T^*)$ close enough to satisfy
	\begin{equation*}
	\label{eq:close}
	T^*-t_0<\frac{t_0}{\sigma^q},
	\end{equation*}
	and choose $r>0$ to satisfy
	\begin{equation*}
	cu(x_0,t_0)^{2-q}r^q=T^*-t_0
	\end{equation*}
	where $c$ and $\sigma$ are the constants given by Harnack's inequality.
	By these choices
	\begin{equation*}
	t_0-cu(x_0,t_0)^{2-q}(\sigma r)^q=t_0-\sigma^q\left(T^*-t_0\right)>0
	\end{equation*}
	and therefore $(x_0,t_0)+Q_{\sigma r}(\theta)\subset \R^n\times\R^+$ and thus we can use the Harnack's inequality to obtain
	\begin{equation*}
	0<u(x_0,t_0)\leq\gamma\inf_{B_r(x_0)}u(\cdot, T^*)=0,
	\end{equation*}
	which is a contradiction. 
	
	There are some known Harnack-type results with additional assumptions for the $p$-parabolic equation in the subcritical range, see for example \cite[Proposition 1.1]{Dibenedetto2009}.
	%\bibliographystyle{alphaabbr}
	%\bibliography{../../../bibtex/Harnack}
	\newpage
	\begin{center}
		{\textsc{Declarations}}
	\end{center}
	\textbf{Ethical Approval.} 
	Not applicable.
	
	\textbf{Competing interests.}
	There are no competing interests.
	
	\textbf{Authors' contributions.}
	Both authors have the same level of contribution.
	
	\textbf{Funding.} 
	Jarkko Siltakoski was supported by the Magnus Ehrnrooth Foundation.
	
	\textbf{Availability of data and materials.}
	Not applicable.

\end{document}

\begin{lemma}
	\label{lem:supersolution}
	Suppose that $1<q<2$, $p>1$ and let $R>0$. Then there exists a
	positive constant $\lambda=\lambda(n,p,q)$ such that the function
	\begin{equation}
		v(x,t):=\lambda t^{\frac{1}{2-q}}\left(\frac{1}{R^{\frac{1}{1-q}}(R^{\frac{q}{q-1}}-\left|x\right|^{\frac{q}{q-1}})}\right)^{\frac{q}{2-q}}\label{eq:explicit formula}
	\end{equation}
	is a viscosity supersolution to \eqref{eq:rgnppar} in
	$B_{R}(0)\times(0,\infty)$.
\end{lemma}
This supremum estimate replaces the weak Harnack inequality used in the divergence form case.
\begin{lemma}
	Assume $u$ is a viscosity solution to \eqref{eq:rgnppar} in $Q_{4R}^-(1)$.
\end{lemma}
\begin{lemma}
	\label{le:backintime}
	Assume $u$ is a viscosity solution to \eqref{eq:rgnppar} in $\Om_T$ and let $(\hat{x},\hat{t})\in\Om_T$, $R>0$ and $\tau>0$ such that $B_{R}(\hat{x})\times[\hat{t}-\tau,\hat{t}]\subset\Om_T$. Then there exists a constant $\lambda_1:=\lambda_1(n,p,q)$ such that
	\begin{equation*}
		\sup_{B_{R}(\hat{x})}u(\cdot,t)\leq \sup_{B_{2R}(\hat{x})}u(\cdot,\hat{t})+\lambda_1\tau^{\frac{1}{2-q}}R^{-\frac{q}{2-q}}.
	\end{equation*}
	for all $t\in[\hat{t}-\tau,\hat{t}]$.
\end{lemma}
\begin{proof}
	Let
	\begin{equation*}
		v(x,t)=\lambda (\hat{t}-t)^{\frac{1}{2-q}}\left(\frac{1}{(2R)^{\frac{1}{1-q}}((2R)^{\frac{q}{q-1}}-\left|x-\hat{x}\right|^{\frac{q}{q-1}})}\right)^{\frac{q}{2-q}}+\sup_{B_{2R}(\hat{x})}u(\cdot,\hat{t}).
	\end{equation*}
	This is a viscosity supersolution to \eqref{eq:rgnppar} in $B_{2R}(\hat{x})\times(-\infty,\hat{t}]$ by Lemma \ref{lem:supersolution}. We also have
	\begin{equation*}
		\begin{cases}
			u(x,\hat{t})\leq\sup_{B_{2R}(\hat{x})}u(\cdot,\hat{t})=v(x,\hat{t}) & \text{ on } B_{2R}(\hat{x}) \\
			\limsup_{\substack{(x,t)\to(y,s)\\ x\in B_{2R}(\hat{x})}}v(x,t)=\infty & \text{ for all }(y,s)\in\partial B_{2R}(\hat{x})\times(-\infty,\hat{t}).
		\end{cases}
	\end{equation*}
	Thus necessarily $u\leq v$ on $\partial_{p}(B_{2R}(\hat{x})\times[\hat{t}-\tau,\hat{t}])$ and we can use comparison principle Theorem \ref{thm:comp} to deduce that
	\begin{equation*}
		u(x,t)\leq \lambda (\hat{t}-t)^{\frac{1}{2-q}}\left(\frac{1}{(2R)^{\frac{1}{1-q}}((2R)^{\frac{q}{q-1}}-\left|x-\hat{x}\right|^{\frac{q}{q-1}})}\right)^{\frac{q}{2-q}}+\sup_{B_{2R}(\hat{x})}u(\cdot,\hat{t})
	\end{equation*}
	for all $x\in B_{2R}(\hat{x})$ and $t\in[\hat{t}-\tau,\hat{t}]$.
	Taking supremum over $B_R(\hat{x})$ from both sides, we have
	\begin{align*}
		\sup_{B_{R}(\hat{x})}u(\cdot,t)&\leq \sup_{B_{2R}(\hat{x})}u(\cdot,\hat{t})+\lambda \left(2^{\frac{1}{1-q}}\left(2^{\frac{q}{q-1}}-1\right)\right)^{-\frac{q}{2-q}}\tau^{\frac{1}{2-q}}R^{-\frac{q}{2-q}}.
		\\&= \sup_{B_{2R}(\hat{x})}u(\cdot,\hat{t})+\lambda_1\tau^{\frac{1}{2-q}}R^{-\frac{q}{2-q}}.
	\end{align*}
	for all $t\in[\hat{t}-\tau,\hat{t}]$, which is what we wanted.
\end{proof}
We have all we need to prove the intrinsic Harnack's inequality.